\newcommand{\local}{{\text{loc}}}
\newtheorem{theorem}{Theorem}[section]
\newtheorem{lemma}[theorem]{Lemma}
\newtheorem{corollary}[theorem]{Corollary}
\newtheorem{proposition}[theorem]{Proposition}
\newtheorem{remark}[theorem]{Remark}
\newtheorem{assumption}[theorem]{Assumption}
\numberwithin{equation}{section}
\newcommand{\RNn}{\setR^{N\times n}}
\newcommand{\dint}{{\dashint}}
\newcommand{\unite}{{\!\!\!\!\!\!\!\!\!\!\!\!\!}}
\newcommand{\nsk}{{\!\!\!}}
\newcommand{\slant}{{\nearrow}}
\providecommand{\da}{\,{d}a}
\providecommand{\dx}{\,{d}x}
\providecommand{\dz}{\,{d}z}
\providecommand{\ds}{\,{d}s}
\providecommand{\dt}{\,{d}t}
\providecommand{\Div}{\divergence}
\begin{document}

\title{On regularity of the time derivative for degenerate parabolic systems}
\thanks{Sebastian Schwarzacher is financed by the program PRVOUK P47 at the department of analysis at the Charles University Prag.}
\author{Jens Frehse}
\address{Jens Frehse,
Department of applied analysis,
University of Bonn,
Endenicheralle 60,
53115 Bonn, Germany}
\email{erdbeere@iam.uni-bonn.de}
\author{Sebastian~Schwarzacher}
\address{Sebastian Schwarzacher, Department of mathematical analysis, Faculty of Mathematics and Physics,  Charles University in Prague,
Sokolovsk\'{a} 83, 186 75 Prague, Czech Republic}
\email{schwarz@karlin.mff.cuni.cz}

\begin{abstract}
  We prove regularity estimates for time derivatives of a large class of nonlinear parabolic partial differential systems. This includes the instationary (symmetric) p-Laplace system and models for non Newtonien fluids of powerlaw or Carreau type. By the use of special weak different quotients, adapted to the variational structure we bound fractional derivatives of $u_t$ in time and space direction. 

Although the estimates presented here are valid under very general assumptions they are a novelty even for the parabolic $p$-Laplace equation. 
\end{abstract}

\keywords{Degenerate parabolic systems, Regularity of PDE, Non Newtonian fluids
\\
MSC: 35K40, 35Q35, 35B45, 35K65, 76D07
}

\maketitle
% \date{\small \today}
\section{Introduction}
%Analysis and numeric has faced great challenges working with Non-Newtonian fluids. For many different types of non-Newtonian fluids physicians have developed models which are convincing from an experimental point of view. Usually these models are varieties of the famous Navier Stokes equations with additional (non-linear) equations for the density, the pressure or the viscosity. 

%The mathematical challenges are closely related to the regularity of solutions to such systems. Existence theory for partial differential equations generally does inherit to little regularity on the one hand to ensure numeric schemes to converge with a rate; on the other hand to exclude singular (non-physical) behavior. Certainly if enough regularity is known, it also implies uniqueness for members in the family of Navier Stokes Type equations.

In this paper we will prove fractional differentiability for the time derivative $u_t$. This we can do for a very general class of essential non-linear PDE; the model case is the parabolic p-Laplace. But our results include models of non-Newtonian fluids of power-law type initiated by Ladyzhenskaya~\cite{Lad67,Lad68} and J.J.-Lions~\cite{Lio69} (see Subsection~\ref{ssec:nnfl} below). 

 To our knowledge little regularity for the time derivative is known up to now even for the homogeneous instationary p-Laplace equation (i.e. \eqref{eq:plap}, with $f\equiv0$). In comparison quite a lot of regularity is known for the space gradient. After the pioneering work of Friedman and DiBenedetto~\cite{DiBFri85} where they proof that space gradients are H\"older continous if the right hand side is zero a multifarious collection of results on gradient regularity was developed. We will discuss the known results and its implication of the regularity of $u_t$ more detailed in the next subsection.

In the framework of embedding theory, interpolation to finite element spaces or compactness issues the regularity of the time derivative is of essential importance. Indeed, its regularity often restricts the estimates crucially. Therefore and because of the generality of our approach, we emphasize that the results and techniques developed in this article will be an important step in the analysis and numeric to many instationary applications.
      
\subsection{The parabolic p-Laplace system}
The reader will be introduced to the results of this article by explaining them on the model case. Namely, the (symmetric) parabolic p-Laplace system. 
\begin{align}
 \label{eq:plap}
\begin{aligned}
 u_t-\divergence(\abs{D u}^{p-2}D u) &= f\text{ on } Q_T\\
u&=0\text{ on }(0,T)\times\partial \Omega\\
u(0,x)&=u_0(x)\text{ on }\Omega.
\end{aligned}
\end{align}
Here $\Omega\subset\setR^n$ is a domain in space and $[0,T)$ a time interval $Q_T:=[0,T)\times \Omega$. We will use $Dv$ as a substitute for both the gradient $\nabla v$ as well as the symmetric gradient $\epsilon v:=\frac12(\nabla v+(\nabla v)^T$ (which is of course only defined, whenever $n=N$). This is due to the fact, that we can treat both cases simultaneously.
Standard existence theory implies that there is a unique solution $u\in L^p((0,T),V^{1,p}_0(\Omega))\cap L^\infty([0,T),L^2(\Omega))$ for every right hand side $f\in L^{p'}([0,T),(V^{1,p}_0(\Omega))^*)$ and every $u_0\in L^2(\Omega)$. The space $V^{1,p}_0(\Omega):=\overline{C^\infty_0(\Omega)}^{\norm{\nabla \cdot}_{L^p(\Omega)}}$.

Our aim are estimates for $u_t$ in terms of Nikolskij spaces. These are spaces that represent fractional derivatives. For $\alpha\in (0,1]$ (the order of derivative) and $q\in [1,\infty)$ (the exponent of integrability) we say that $g\in \mathcal{N}^{\alpha,q}((a,b))$, if
\[ \norm{g}_{\mathcal{N}^{\alpha,q}((a,b))}^q:=\sup_{h>0}\int_a^{b-h}\Bigabs{\frac{g(t+h)-g(t)}{h^\alpha}}^q\dt+\int_a^{b}\abs{g}^q\dt<\infty. 
\]

For a general domain $A$ we say that $g\in \mathcal{N}^{\alpha,q}(A)$ if the property above holds in all coordinate directions.
Nikolskij spaces are closely related to fractional Sobolev spaces $W^{\alpha,q}(A)$.  

Let us just mention that $0<\alpha<\beta<1$ both
\[
W^{\beta,q}((a,b))\subset\mathcal{N}^{\beta,q}((a,b))\subset 
W^{\alpha,q}((a,b)).
\]
In our setting the relation can be used, as for fractional Sobolev spaces the following embedding theorem is available: For a Lipschitz domain $A\subset\setR^n$ and $g\in W^{\alpha,q}(A)$ we have 
\begin{align}
\label{eq:se}
 \norm{g}_{L^\frac{nq}{n-\alpha q}(A)}\leq c\norm{g}_{W^{\alpha,q}(A)}.
\end{align}
This implies for instance that if $g\in \mathcal{N}^{\alpha,q}(A)$, $g\in L^a_{\text{local}}(A)$ for every $1\leq a<\frac{nq}{n-\alpha q}$.
 For the embedding theorem and a more detailed study on the given function spaces we refer to~\cite{Tri92, Ada75}. For the study of fractional spaces in the framework of PDE we recommend~\cite{Min06}.

The first result of this article is that $u_t$ has fractional derivatives of order one half in time. More precisely $u_t$ is in the Bochner-Nikolskij space $\mathcal{N}^{\frac{1}{2},2}(a,T-a),L^2(\Omega))$; which means 
\[
\sup_{h<a}\int_{a}^{T-a}\int_\Omega \Bigabs{\frac{u_t(t+h)-u_t(t)}{h^\frac12}}^2 dx dt <\infty,
\]
for $a>0$ (see Theorem~\ref{thm:main}. 

Our second result concerns spatial derivatives of $u_t$. In the degenerate case the existence of the space gradient of $u_t$ is not necessarily available. Therefore the fractional regularity of $u_t$ in space direction is of interest. 

In case of the whole space  $\Omega=\setR^n$ (or in the space periodic case) we can prove that $u_t$ has fractional spatial derivatives. 

 For all $1<p<\infty$ we prove a global estimates, i.e. 
\[
\sup_{h<a}\int_{a}^{T-a}\int_{\setR^n} \Bigabs{\frac{u_t(t,x+he_i)-u_t(t,x)}{h^\frac14}}^2 dx dt <\infty,
\]
for $e_i$ the $i$-th unit vector. This implies that $u_t\in L^2((a,T-a),\mathcal{N}^{\frac{1}{4},2}(\setR^n))$ (see Theorem~\ref{thm:main2}). 

These estimates can be refined in case the solutions are space-periodic. In this case we can show that $u_t\in L^2_{\text{loc}}((0,T),\mathcal{N}^{\frac{1}{2},2}_{\text{loc}}(\setR^n))$, provided $2-\frac2n< p\leq4$, furthermore we obtain Nikolskij differentiability orders in the interval $(\frac14,\frac12]$ for all $2-\frac2n\leq p<\infty$ (see Theorem~\ref{cor:alz}).

It turned out to be more difficult to gain regularity in space direction then in time. If one considers bounded domains one gets non-trivial additional boundary terms; a possible way of treating also this situation looks sophisticated and we postpone it to a future project.

In the special case of the parabolic p-Laplace one will find quite satisfactory estimates of $u$ and $\nabla u$ in the literature. Particular, the gradient is H\"older continuous provided the right hand side is smooth~\cite[Chapter 8]{DiB93} and \cite{Mis02,Sch13}. This is the parabolic version of the elliptic analogue initiated by Uhralzeva~\cite{Ura68} for equations and Uhlenbeck~\cite{Uhl77} for systems.
Moreover, an $L^q$-Theory~\cite{AceMin07} (by non-linear \Calderon-Zygmund theory initiated by Iwaniec~\cite{Iwa83}) and pointwise estimates of Riesz type are available for the parabolic p-Laplace~\cite{KuuMin123,KuuMin13}. The only results for the time derivative that is available is see~\cite{Lin12} and \cite[Theorem~8.1]{Lio69}.

In the next two paragraphs we introduce two generalizations of the parabolic p-Laplace for which we can prove the very same regularity results.
\subsection{General systems}
We present the general case of systems where the elliptic operator is Euler of a variational problem.

We consider the following energy functional. For $v:\Omega\to \setR^N$
\begin{align}
 \label{energy1}
\mathcal{J}(v)=\int_{\Omega}F(x,Dv)-fv dx.
\end{align}
The respective Euler operator is defined as $-\Div (\partial_z F).$
Again we will use $Dv$ as a substitute for both the gradient as well as the symmetric gradient as we can treat both cases simultaneously. 

At this point we discuss the assumptions for systems with p-growth. Observe, that we will include later generalized conditions of Orlicz growth (see Assumptions~\ref{ass:phi} below). 
\begin{assumption}
\label{ass:p}
For $x\in\setR^n$ and $Q,P\in\setR^{nN}$ we assume the following growth conditions for $\mu\geq 0$
\begin{enumerate}
 \item  $F$ is measurable and $F(x,\cdot)\in C^{0,1}(\setR^{n\times N})\cap C^2(\setR^{n\times N}\setminus\set{0}).$
\item For $A_{i,j}(x,Q):=\partial_{Q_{ij}} F(x,Q)$, $\lambda (\abs{Q}^2+\mu^2)^\frac{p-2}{2}\abs{Q}^2\leq A(x,Q)\cdot Q$ and $\abs{A(x,Q)}\leq \Lambda (\abs{Q}^2+\mu^2)^\frac{p-2}{2}\abs{Q}$, for $0<\lambda\leq \Lambda$. 
\item $ (A(x,Q)-A(x,P))\cdot(Q-P)\sim \abs{V(Q)-V(P)}^2$, for $V(Q):=(\mu^2+\abs{Q}^2)^\frac{p-2}4Q$.
\item $\abs{D_x A(x,Q)}\leq (\abs{Q}^2+\mu^2)^\frac{p-2}{2}\abs{Q}$.
\item 
$D_{Q}A(x,Q)P\otimes P\sim \abs{P}^2(\mu^2+\abs{Q}^2)^\frac{p-2}{2}$
%\item $\partial_{z_{ij}}A(x,Q)\sim\sum_{l,k}(\mu^2+\abs{Q}^2)^\frac{p-2}{2}\Big(Id+\frac{p-2}{2}\frac{Q_{l,k}Q_{i,j}}{\mu^2+\abs{Q}^2}\Big)$.
\item $F(x,0)=0$. 
\end{enumerate}
In case we consider the symmetric gradient, we need additionally that
\begin{enumerate}
 \item[{\rm (g)}] If $Q$ is symmetric, then $A(x,Q)$ is also symmetric.
\end{enumerate}
The constants used in this assumption are called the \textit{characteristics of $F$}.
\end{assumption}
\begin{remark}
Well known is the case of Uhlenbeck structure; $F(x,\abs{Du})$. In case $ F(x,Du)=\frac{1}{p}\abs{Du}^p$ \eqref{eq} becomes the p-Laplace or the symmetric p-Laplace.

 It is possible to replace (b) by the assumption

$\lambda (\abs{Q}^2+\mu^2)^\frac{p-2}{2}\abs{Q}^2-k(x)\leq A(x,Q)\cdot Q$ and $\abs{A(x,Q)}\leq \Lambda (\abs{Q}^2+\mu^2)^\frac{p-1}{2}+K(x)$, with $k,K\in L^1(\setR^n)\cap L^\infty(\setR^n)$.

It is also possible to replace (d) by the assumption
$D_x A(x,Q)= B_1(x,Q)+B(x)$, such that $\abs{B_1(x,Q)}\leq  c(\abs{Q}^2+\mu^2)^\frac{p-2}{2}\abs{Q}$ and $\nabla B\in L^{p'}(Q_T)$. More general one considers $B$ to be of lower order. I.e. of the form $B(x,Q)$ has lower then $p-1$ growth in $\abs{Q}$. Finally it is possible to replace (f) by $F(x,0)=g(x)\in L^1(Q_T)\cap L^\infty(Q_T)$.
We did not include this weaker assumptions, as the proof would not change significantly but would be more complicated to read. 
\end{remark}

% 
% In this case minimizers $u$ over $W^{1,p}(\Omega)$ of $\mathcal J$ hold the following Euler Lagrange equation 
% \[
% \skp{\partial_{z}F(\cdot,Du)}{\nabla \phi}=\skp{f}{\phi}\text{ for }\phi\in C^\infty_0(\Omega).
% \]

From the above energy functional \eqref{energy1} we associate the following flow. 
\begin{align}
 \label{eq}
\begin{aligned}
 u_t-\divergence(A(x, Du))&= f\text{ on } Q_T\\
u&=0\text{ on }(0,T)\times\partial \Omega\\
u(0,x)&=u_0(x)\text{ on }\Omega.
\end{aligned}
\end{align}
% This includes the model case of instationary $p$-Laplace, symmetric $p$-Laplace and power law fluids ($F(x,z)=\abs{z}^p$).
In the general case, in particular problems where the symmetric gradient is involved one knows only the improved differentiability based on classical difference quotient techniques. Recently in the PhD thesis of Burczak, some local (in space) estimates of fractional time derivative of $u$ is shown~\cite[Lemma~4.9]{BurThe}. See also~\cite{Bur14} for an overview of regularity of symmetric $p$-Laplace. 
% Furthermore there is a parabolic version for the technique developed by Gehring, Giaquinta, Modica and Iwaniec for obtaining a slightly improved integrability of $\partial_tV(Du)$ and $\nabla V(Du)$.
Our paper gives an additional progress in the theory, by showing partial differentiability of $u_t$ in time and space direction.
Clearly there are many paper on partial regularity or short-time regularity, which is not of our concern here.

\subsection{Non-Newtonian fluids}
\label{ssec:nnfl}
Our regularity theory implies results for certain non Newtonian fluids; in explicit for solutions of the following system
\begin{align}
 \label{stokes}
\begin{aligned}
 u_t-\divergence(A(x, \bfepsilon u))+\nabla\pi&= f\text{ on } Q_T\\
\divergence(u)&=0\\
u&=0\text{ on }(0,T)\times\partial \Omega\\
u(0,x)&=u_0(x)\text{ on }\Omega.
\end{aligned}
\end{align}
Here $u:\Omega\to \setR^n$ describes the velocity and $\pi:\omega\to\setR$ describes the pressure of a fluid moving through time in $\Omega$. We denote the symmetric gradient as $\bfepsilon v:=\frac12(\nabla v+(\nabla v)^T)$. The vector field $f:\Omega\to\setR^{n\times n}$ is the outer force (e.g. the gravity) and $u_0$ some given initial state.
The matrix $A(x,\bfepsilon u)$ is the (non-linear) stress tensor. Our model situation is, that $A$ is of power law type, i.e. $A(x,\bfepsilon u)=\nu (\mu^2 + \abs{\bfepsilon u})^\frac{p-2}2\bfepsilon u$, for $1<p<\infty$, $\nu>0$ and $\mu\geq 0$. However we can also treat Carreau type fluids: $A(x,\bfepsilon u)=\nu_\infty \bfepsilon u+ \nu (\mu^2 + \abs{\bfepsilon u})^\frac{p-2}2\bfepsilon u$, for $\nu_\infty\geq 0$. 

Observe, that if $p=2$, i.e. $ A(x,\bfepsilon u)=\nu \bfepsilon u$ the system becomes the classical (linear) Stokes equation. The non-linear dependence of $\abs{\bfepsilon u}$ is due to the fact, that in the case of non-Newtonian fluids the viscosity changes with the velocity (more explicit by the shear rate).
% The full Navier-Stokes power law model includes the convective term
%, such that the PDE reads as follows
%\[
%u_t-\Div(\nu\abs{\bfepsilon u+\mu}^{p-2}\bfepsilon u)+[\nabla u] u +\nabla\pi&= f.
%\]
Power-law or Carreau type fluids are therefore widely used among engineers. For a more detailed discussion of the physical model we refer to \cite{MalR05,Raj92} and references therein. 

Please observe, that the weak formulation of \eqref{eq} and \eqref{stokes} are the same on divergence free testfunctions. Indeed, Assumption~\ref{ass:p} and implies that there exists a unique\footnote{Here $V^{1,p}_{0,\divergence}(\Omega):=\overline{\set{g\in C^\infty_0(\Omega):\divergence g=0} }^{\norm{\nabla \cdot}_{L^p(\Omega)}}$.} $u\in L^p((0,T),V^{1,p}_{0,\Div}(\Omega))\cap L^\infty((0,T),L^2(\Omega))$, such that
\begin{align}
 \label{weak}
\begin{aligned}
&-\int_0^T\int_\Omega u\partial_t\xi + A(x, D u)D\xi\dx\dt+\int_\Omega u(y,T)\xi(T,x)-u_0(x)\xi(0,x)\dx\\
&\qquad=\int_0^T\int_\Omega f\xi\dx\dt\text{ for }\xi\in \set{C^\infty_0(\Omega),\,\Div \xi=0},
\end{aligned}
\end{align}
 for every right hand side $f\in L^{p'}([0,T),(V^{1,p}_{0,\Div}(\Omega))^*)$ and every $u_0\in L^2(\Omega)$. 

Although our techniques are shaped for the reduced Stokes power law model \eqref{stokes} they are of relevance for the full Navier-Stokes power law model that includes the convective term
%, such that the PDE reads as follows
\[
u_t-\Div(\nu\abs{\bfepsilon u+\mu}^{p-2}\bfepsilon u)+[\nabla u] u +\nabla\pi= f.
\]
 It is clear, that for sufficiently large $p$ the convective term can be treated as right hand side. 

For instationary power law fluids little regularity is known. In fact, the results mainly relay on classical difference quotients. If they are applied in time direction they imply $L^\infty((0,T),L^2(\Omega))$ estimates for $u_t$, see~\cite{BulEttKap10} (and Proposition~\ref{pro:Vt}). In \cite{KapMalSta02} the authors used space and time difference quotient to deduce by embedding H\"older continuity for gradients, in two space dimensions. See also~\cite{Kap05} where some fractional derivatives of $u_t$ are shown in the non-degenerate case and in two space dimensions.
   
The results present here might very well be an important step to be able to transfer stationary theory to the instationary case. However, namely in case of three space dimension the additional regularity $u_t\in \mathcal{N}^{\frac{1}{2}}((0,T),L^2(\Omega))$ and $u_t\in L^2((0,T),\mathcal{N}^{\frac14,2}(\setR^n)$ is of independent value. 
%  Those which are familiar know that it depends strongly on the regularity for the Stokes system. We emphasize that the regularity properties proven here besides of its independent value might very well be of importance in future analysis of non Newtonian fluids. 
\smallskip

Let us briefly discuss the techniques we will use and develop.
For the estimate of the fractional time derivative we construct backward-forward quotients of the type
\begin{align}
\label{eq:quot}
\frac{u_t(t,x)-\dashint_0^hu_t(t+s,x)ds}{h}.
\end{align}
This quotient technique was developed in~\cite{FreSpe12a,FreSpe12b} but used on different types of PDE. 

 For the spatial estimates we will use what we call ``Queer Quotients'', which are a mixed in time-space difference quotient of of type \eqref{eq:quot}. By doing so we can use the variational structure of the elliptic part of the system. This can be combined with the estimate for pure in time fractional derivatives already obtained to get fractional differentiability in space for $u_t$. Interestingly the estimates are mainly driven by the term $u_t$; although the ellipticity assumptions are needed to treat the term $A(\cdot,Du)$. 
  
 % This is an improvement of known results including for the degenerate model case.
%  We will explain shortly, why this approach is important. Let us start with the known techniques that increase differentiability. We apply them on the model example of the stationary p-Laplace. Standard differential quotients imply
% \[
%  \int_{\Omega}\abs{\nabla u}^{p-2}\abs{\nabla^2 u}^2 dx<\infty.
% \] 
% This was already proven in the pioneering works of . What is remarkable is that $u$ is not twice differentiable in space. Only the so called excess functional $V(\nabla u):=\abs{\nabla u}^\frac{p-2}{2}\nabla u$ has a derivative.
% 
% In case of parabolic p-Laplace standard difference quotient techniques implies that $\nabla V(\nabla u)$ and $\partial_tV(\nabla u)$ are bounded. These bounds do not imply that $u_t$ has a spatial derivative in case $p>2$. It seems that $u_t$ has only little differentiability in space for large $p$. See Lemma~\ref{cor:alz} at the very end of the paper, where we improve the space differentiability of $u_t$ in dependence of the size of $p$.
Indeed, the positive terms in our estimates will always be derived from the term $u_t$; not like more common from both terms of the right hand side.
 \begin{remark}
 \label{rem:1}
 Assumption~\ref{ass:p}-(e),  which might at first seem the most restrictive assumption is actually a consequence of (c).
As $A(x,\cdot)$ is differentiable we deduce from (c), that
\[
\frac1{h^2}(A(x,Q+hP)-A(x,Q))\cdot hP\sim\Bigabs{\frac{V(Q+hP)-V(Q)}{h}}^2.
\]
By letting $h\to 0$ we get $D_Q A(x,Q)P\otimes P\sim\abs{D_QV(Q)}^2\abs{P}^2$. Now (e) follows by calculating $D_Q V(Q)$. %This observation was motivated by \cite[Lemma~2.13]{DieKap12}.
\end{remark}

\section{Assumptions and Main results}
Our estimates allow to assume growth condition related to a convex Orlicz function.
\begin{assumption}
 \label{ass:phi1}
Let $\phi\in C^2((0,\infty),(0,\infty))\cap C^1([0,\infty),[0,\infty))$, be a convex, $\phi'(0)=0$ and $\phi'(t)>0$, for $t>0$ function,  that holds $\phi''(t)t^2\sim \phi(t)$, for $t>0$. 
\end{assumption}
This includes $\phi(t)=t^p$, for $1<p<\infty$ and also $\phi(t)=\max\set{t^p,t^q}$, for two different exponents. Excluded are $t\log(t)$ on the one end and $e^t$ on the other end.
\begin{assumption}
\label{ass:phi}
Let $\phi$ hold Assumption~\ref{ass:phi1}. Additional we assume the following growth conditions
for $x\in\setR^n$ and $Q,P\in\setR^{nN}$ and $\mu\geq0$.
\begin{enumerate}
 \item  $F$ is measurable and $F(x,\cdot)\in C^{0,1}(\setR^{n\times N})\cap C^2(\setR^{n\times N}\setminus\set{0}).$
\item For $A(x,Q):=D_Q F(x,Q)$, $\lambda \phi'(\abs{Q}+\mu)\abs{Q}\leq A(x,Q)\cdot Q$ and $\abs{A(x,Q)}\leq \Lambda\phi''(\abs{Q}+\mu)\abs{Q}$. 
\item $ (A(x,Q)-A(x,P))(Q-P)\sim \abs{V(Q)-V(P)}^2$, for $V(Q):=\sqrt{\phi'(\abs{Q}+\mu)\abs{Q}}\frac{Q}{\abs{Q}}$.
\item $\abs{D_x A(x,Q)}\leq c\phi'(\abs{Q}+\mu)$.
\item $D_QA(x,Q)P\otimes P\sim \abs{P}^2\phi''(\abs{Q}+\mu)$.
%\item $D_zA(x,Q)\sim \frac{\phi'(\abs{Q})}{\mu+\abs{Q}}\Big(Id+\alpha \frac{Q\otimes Q}{\mu^2+\abs{Q}^2}\Big)$ for $\mu\geq 0$ and $\alpha>-1$. 
\item $F(x,0)=0$.
\end{enumerate}
In case we consider the symmetric gradient, we need additionally that
\begin{enumerate}
 \item[{\rm (g)}] If $Q$ is symmetric, then $A(x,Q)$ is also symmetric.
\end{enumerate}
The constants used in this assumption are called the \textit{characteristics of $F$}.
\end{assumption}
In case of $\phi(t)=t^p$ we have the typical p-growth, which is included in the assumption above.

Assumption~\ref{ass:phi} include the following case of Uhlenbeck structure, that $F(x,Q)=\phi(\abs{Q})$. Then we have $A_{i,j}(x,Q):=\phi'(\abs{Q})\frac{Q_{ij}}{\abs{Q}}$. 
This particular case of Uhlenbeck structure is the subject of study by an increasing number of mathematicians nowadays. Many physical relevant models are of Orlicz growth and not $p$-growth. For example Carreau type fluids.

We introduce $\phi^*(t):=\sup_{a>0}(at-\phi(a))$ is the conjugate Orlicz function. The following inequality of Young type holds (by definition)
\begin{align}
\label{eq:young}
 ab\leq \phi^*(a)+\phi(b),\text{ for all }a,b\in [0,\infty).
\end{align}
The assumption $\phi''(t)t^2\sim \phi(t)$ implies that its conjugate $\phi^*$ has analogous properties, see~\cite{DieE08,DieKapSch11}.
The space $L^\phi(\Omega):=\set{f:\int_\Omega \phi(\abs{f})<\infty}$ is a Banach space endowed with the Luxembourg norm. Moreover, we define
\[
 V^{1,\phi}_0(\Omega)=\overline{C^\infty_0(\Omega)}^{\norm{\nabla \cdot}_{L^\phi(\Omega)}}\text{ and }V^{1,\phi}_{0,\Div}(\Omega):=\overline{\set{g\in C^\infty_0(\Omega):\Div g=0} }^{\norm{\nabla \cdot}_{L^\phi(\Omega)}}.
\]
For the definition and properties of $L^\phi(\Omega),W^{1,\phi}(\Omega)$ and for the analysis on divergence free Orlicz spaces we refer to~\cite{DieKap12,DieKapSch13}. The following estimate is important for us. It can be found in \cite[Lemma~2.4]{DieKapSch11},
\begin{align}
\label{eq:hammer} \Big(\frac{\phi'(\abs{Q}+\mu)Q}{\mu+\abs{Q}}-\frac{\phi'(\abs{P}+\mu)P}{\mu+\abs{P}}\Big)(Q-P)\sim\abs{V(Q)-V(P)}^2\sim \phi''(\mu+\abs{Q}+\abs{Q-P})\abs{P-Q}^2.
\end{align}
Together with Remark~\ref{rem:1}, this implies that whenever it is well defined
\begin{align}
\label{eq:hammer2}\abs{\partial_{i}V(Du)}^2\sim \phi''(\mu+\abs{Q})\abs{\partial_{i}Du}^2.
\end{align}
Now, standard existence theory implies that if Assumption~\ref{ass:phi} holds, then there exists a unique solution of \eqref{eq} or \eqref{stokes} for $u_0\in L^2(\Omega)$ and $f\in L^{\phi^*}([0,T),(V^{1,\phi}_0)^*)$. Nevertheless, because we need more regularity of the solutions and to keep the paper self contained, we include an existence result (Proposition~\ref{pro:Vt}).

% 
% \begin{enumerate}
%  \item Write about $F(x,Dv)$. and $F(t,x,Dv)$.
% \item Write about the different approximation schemes.
% \item Existence matter.
% \item Include, which assumptions we need for our analysis and which we need for estimates we may as well assume.
% \end{enumerate}
%  
We can now state our results. The first theorem concerns fractional derivatives of $u_t$ in time direction.
\begin{theorem}
 \label{thm:main}
Let $F$ hold Assumption~\ref{ass:p} or Assumption~\ref{ass:phi}. Let $u_0\in L^2(\Omega)$ and $f\in L^{\phi^*}([0,T),(V^{1,\phi}_0(\Omega))^*)$.
Let $u$ be a solution of \eqref{eq} or \eqref{stokes}. If additionally $f\in W^{1,2}((0,T),L^2(\Omega))$, then $u_t\in \mathcal{N}^{\frac12,2}((a,T-a),L^2(\Omega))$ and 
\begin{align*}
 \norm{u_t}_{\mathcal{N}^{\frac12,2}((a,T-a),L^2(\Omega))}\leq \frac{K}{a^2}
\end{align*}
for every $a>0$.
The constant $K$ only depends on the characteristics of $F$ and the regularity of $f$.
\end{theorem}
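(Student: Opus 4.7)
The plan is to test the weak formulation of~\eqref{eq} at time $t$ with the backward-forward quotient
\[
\psi_h(t)\,:=\,\frac{u_t(t)-\dashint_0^h u_t(t+s)\,ds}{h}\,=\,\frac{u_t(t)-D_h u(t)}{h},
\]
where $D_h u(t):=(u(t+h)-u(t))/h$, multiplied by a smooth time cutoff $\eta^2(t)$ with $\supp\eta\subset(a/2,T-a/2)$, $\eta\equiv 1$ on $[a,T-a]$, $|\eta'|\lesssim 1/a$ and $|\eta''|\lesssim 1/a^2$. Using Proposition~\ref{pro:Vt} as the starting point (it supplies $u_t\in L^\infty([a,T-a],L^2(\Omega))$ together with $\iint \phi''(\mu+|Du|)|Du_t|^2\,dx\,dt<\infty$), the strategy is to derive an averaged time-difference bound and then upgrade it by a subadditivity argument.

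For the parabolic term, the algebraic identity $2g(t)g(t+s)=g^2(t)+g^2(t+s)-(g(t)-g(t+s))^2$ together with a shift in $t$ yields, modulo boundary errors of size $\|u_t\|_{L^\infty L^2}^2\,\|\eta'\|_\infty/a\lesssim 1/a^2$,
\[
\iint u_t\,\psi_h\,\eta^2\,dx\,dt\,=\,\frac{1}{2h^2}\int_0^h\!\iint (u_t(t)-u_t(t+s))^2\,\eta^2(t)\,dx\,dt\,ds.
\]
The elliptic contribution $\iint A(x,Du)\cdot D\psi_h\,\eta^2\,dx\,dt$ splits into $\frac{1}{h}\iint A\cdot Du_t\,\eta^2$ and $-\frac{1}{h^2}\iint A(Du(t))\cdot(Du(t+h)-Du(t))\,\eta^2$. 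The potential identity $A(x,Du)\cdot Du_t=\partial_t F(x,Du)$ and integration by parts in time turn the first into $-\frac{1}{h}\iint F(x,Du)\,\partial_t\eta^2\,dx\,dt$. Convexity of $F$ (a consequence of Assumption~\ref{ass:phi}(e)) gives $A(Du(t))\cdot(Du(t+h)-Du(t))\leq F(x,Du(t+h))-F(x,Du(t))$ which, after the shift $r=t+h$, provides the lower bound $\frac{1}{h^2}\iint F(x,Du)\,(\eta^2(t)-\eta^2(t-h))\,dx\,dt$ for the second piece. Adding the two elliptic pieces leaves the Taylor remainder
\[
\frac{1}{h^2}\iint F(x,Du)\,\bigl(\eta^2(t)-\eta^2(t-h)-h\,\partial_t\eta^2(t)\bigr)\,dx\,dt,
\]
whose absolute value is at most $\tfrac{1}{2}\|\eta''\|_\infty\iint F(x,Du)\,dx\,dt\lesssim 1/a^2$ by the energy bound on $F(x,Du)$.

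For the right-hand side $\iint f\,\eta^2\,\psi_h\,dx\,dt=\frac{1}{h}\dashint_0^h\!\iint f\,\eta^2(u_t(t)-u_t(t+s))\,dx\,dt\,ds$, I shift the inner factor so that $(f\eta^2)(t)-(f\eta^2)(t-s)$ multiplies $u_t(t)$; since $f\in W^{1,2}((0,T),L^2(\Omega))$ and $|\eta'|\lesssim 1/a$, this difference is bounded by $Cs$ in $L^2$, which after $\frac{1}{h}\dashint_0^h$ and Cauchy--Schwarz yields a contribution of size $C/a^2$ (a small multiple of the target quantity can be absorbed into the left-hand side via Young). Collecting everything gives
\[
\dashint_0^h\int_a^{T-a}\!\!\int_\Omega (u_t(t+s)-u_t(t))^2\,dx\,dt\,ds\,\leq\,\frac{Ch}{a^2}.
\]
Setting $\Phi(s):=\|u_t(\cdot+s)-u_t\|^2_{L^2((a,T-a-s)\times\Omega)}$, the triangle inequality and time shift-invariance give the subadditivity $\sqrt{\Phi(s_1+s_2)}\leq\sqrt{\Phi(s_1)}+\sqrt{\Phi(s_2)}$, whence $\Phi(h)\leq 4\dashint_0^h\Phi(s)\,ds\leq Ch/a^2$, which is exactly the required Nikolskij bound.

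The principal obstacle will be making the formal potential identity $A(x,Du)\cdot Du_t=\partial_t F(x,Du)$ and the elliptic cancellation rigorous: each of the two elliptic pieces is individually of order $1/(ah)$, and only their second-order Taylor-remainder combination is of the correct order $1/a^2$. I plan to carry out the entire computation on the smooth Galerkin approximations used in the proof of Proposition~\ref{pro:Vt} and then pass to the limit, using the uniform estimates and the lower-semicontinuity of the Nikolskij seminorm.
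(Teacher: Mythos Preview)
Your proposal is correct and follows a genuinely different route from the paper's argument. Both arrive at the same averaged difference bound $\dashint_0^h\|\Delta_t^s u_t\|_{L^2}^2\,ds\le Ch/a^2$ and then upgrade it to a pointwise-in-$h$ Nikolskij estimate, but the two key ingredients differ.

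\textbf{Elliptic cancellation.} The paper tests with the \emph{symmetric} backward--forward quotient $\frac{1}{h}\dashint_0^h\Delta_t^{-s}\Delta_t^s u_t\,ds=2u_t-u(\cdot+h)-u(\cdot-h)$ on a sharp interval $[a+h,T-a-h]$. After applying Lemma~\ref{lem:t}(c), the elliptic part leaves terms of order $1/h$ involving $\int_\Omega F(Du)$ at the endpoints; these do \emph{not} cancel for a fixed $a$, and the paper removes them by an additional integration in the parameter~$a$ over $[0,A]$ (this is the ``important moment'' leading to \eqref{eq:A1}--\eqref{eq:A2}). Your one-sided test function $\psi_h=(u_t-D_hu)/h$ together with a smooth cutoff $\eta^2$ is more direct here: convexity of $F$ alone gives the required inequality, and the two individually $O(1/h)$ elliptic pieces combine into the Taylor remainder $h^{-2}\bigl(\eta^2(t)-\eta^2(t-h)-h\,\partial_t\eta^2(t)\bigr)$, which is $O(\|\eta''\|_\infty)=O(1/a^2)$ by design. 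This avoids the $a$-integration trick entirely, at the (mild) cost of requiring two derivatives of the cutoff.

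\textbf{Averaged to pointwise.} The paper invokes Lemma~\ref{lem:diening}, an integral characterisation of Nikolskij spaces. Your subadditivity argument ($\sqrt{\Phi(h)}\le 2\dashint_0^h\sqrt{\Phi(s)}\,ds$ from $\sqrt{\Phi(s_1+s_2)}\le\sqrt{\Phi(s_1)}+\sqrt{\Phi(s_2)}$, then Cauchy--Schwarz) is an equally elementary alternative and gives the same conclusion with the same constant loss.

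What each buys: your route is shorter for this theorem and needs only convexity rather than the finer two-sided estimates of Lemma~\ref{lem:t}; the paper's infrastructure (Lemma~\ref{lem:t} and the $a$-integration) is, however, reused verbatim for the space-direction estimates in Section~5, so it amortises better across the paper. Two minor points to tidy: the boundary error from the parabolic term is more accurately $\|\eta'\|_\infty\|u_t\|_{L^2_{t,x}}^2\lesssim 1/a^2$ rather than the $L^\infty L^2$ expression you wrote; and keep track of the slightly shrinking time interval in $\Phi(s)$ when applying subadditivity (taking a common domain $(a,T-a-h)$ for all $s\le h$ is the cleanest fix).
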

In case of a bounded domain $\Omega$, the following estimate is available
\begin{align*}
 \norm{u_t}_{\mathcal{N}^{\frac12,2}((a,T-a),L^2(\Omega))}\leq \frac{c}{a^2}\Big(\norm{\phi^*(\abs{f})}_{L^{1}(Q_T)}+\norm{f}_ {W^{1,2}((0,T),L^2(\Omega))}\Big).
\end{align*}
The second theorem concerns fractional derivatives of $u_t$ in space direction.
\begin{theorem}
 \label{thm:main2}
Let $F$ hold Assumption~\ref{ass:p} or Assumption~\ref{ass:phi} on the wholespace $\setR^n$. Let $u_0\in L^2(\setR^n)$ and $f\in L^{\phi^*}([0,T),(V^{1,\phi}_0(\setR^n))^*)$.
Let $u$ be a solution of \eqref{eq} or \eqref{stokes}. 

If additionally $f\in \mathcal{N}^{\frac12,2}(Q_T)$, $f_t\in L^2(Q_T)$ and $\nabla f\in L^{\phi^*}(Q_T)$, then, 
\begin{align*}
 \norm{u_t}_{L^2((a,T-a),\mathcal{N}^{\frac14,2}(\setR^n)}\leq \frac{K}{a^2}. 
\end{align*}
The constant $K$ only depends on the characteristics of $F$ and the regularity of $f$. 
\end{theorem}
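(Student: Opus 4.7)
The target is to show, for each coordinate $e_i$ and small $h > 0$,
$$\int_a^{T-a} \int_{\setR^n} |u_t(t, x+he_i) - u_t(t,x)|^2 \dx\dt \leq K h^{1/2},$$
which is equivalent to the claimed Nikolskij estimate. Respecting the natural parabolic scaling (time $\sim$ space$^2$), I would introduce the queer quotient announced in the introduction,
$$Q_h v(t,x) := v(t, x+he_i) - \dashint_0^{h^2} v(t+s, x)\,ds,$$
which commutes with $\partial_t$ and with every spatial differential operator. The key pointwise identity is
$$u_t(t, x+he_i) - u_t(t,x) = Q_h u_t(t,x) + \Bigl[\dashint_0^{h^2} u_t(t+s,x)\,ds - u_t(t,x)\Bigr].$$
The second bracket, using Jensen's inequality and Theorem~\ref{thm:main}, contributes in $L^2((a,T-a);L^2(\setR^n))$ at most $C h^2 (K/a^2)^2$, far below the required $O(h^{1/2})$. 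The entire task therefore reduces to establishing
$$\int \eta(t)^2 |Q_h u_t(t,x)|^2 \dx\dt \leq C h^{1/2},$$
for a standard temporal cutoff $\eta$ equal to $1$ on $[a,T-a]$ and supported in $[a/2,T-a/2]$.

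To obtain this bound I would apply $Q_h$ to the weak form of \eqref{eq} (or \eqref{stokes}), producing $Q_h u_t = \divergence Q_h[A(\cdot, Du)] + Q_h f$, and test with an appropriate queer-quotient test function $\xi$ built from $u$ and its spatial translates so that $\xi$ is genuinely admissible and yet no $\nabla u_t$ or $u_{tt}$ appears. The overall architecture of the resulting estimate is as advertised in the introduction: the parabolic term, after integration by parts in $t$ together with the commutation $\partial_t Q_h = Q_h \partial_t$, produces the positive quantity $\int \eta^2 |Q_h u_t|^2$; the elliptic term is handled by the monotonicity identity \eqref{eq:hammer} (applied with $P = Du(t,x+he_i)$ and $Q$ equal to the time average of $Du$), reducing it to the spatial fractional differentiability of $V(Du)$, which at order $\tfrac12$ is standard on $\Omega = \setR^n$ via difference-quotient methods and matches exactly the $h^2$ time scale of $Q_h$; and the $f$-contribution splits using $f \in \mathcal{N}^{1/2,2}(Q_T)$ for the spatial translate, $\nabla f \in L^{\phi^*}(Q_T)$ whenever a spatial integration by parts is needed, and $f_t \in L^2(Q_T)$ for the time-average gap. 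All three pieces match the target rate $O(h^{1/2})$, which is the natural output: only half of the translation cost is absorbed by the positive term, the other half being borrowed from the already-established temporal Nikolskij bound of order $\tfrac12$.

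The main obstacle, as the authors already hint, lies in the construction of $\xi$. It must (i) avoid $\nabla u_t$ and $u_{tt}$, neither of which is available in the degenerate regime, forcing $\xi$ to depend on $u$ (and its translates/averages) rather than on $u_t$ directly; (ii) be symmetric enough that the cross terms between the forward spatial translation and the backward time average inside $Q_h$ cancel instead of accumulate, which most likely requires pairing $Q_h$ with its $L^2$-adjoint or working with a symmetrized variant; and (iii) be tuned so that the elliptic and source contributions genuinely scale like $h^{1/2}$ rather than degrading to $h^{1/4}$ or worse. Verifying (i)--(iii) simultaneously, and in particular showing that the temporal cutoff $\eta(t)$ interacts cleanly with the time-averaging part of $Q_h$ (whose support shifts outside $[a,T-a]$ by at most $h^2 \ll a$), is what the ``queer'' construction is designed for and is, I expect, the technically delicate heart of the proof.
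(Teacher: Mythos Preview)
Your high-level split (spatial difference $=$ mixed piece $+$ pure-time piece handled by Theorem~\ref{thm:main}) is correct, but the core of the argument is both misidentified and left open, so the proposal is not a proof. The ``queer quotient'' in this paper is not your $Q_h$ with parabolic scaling $h^2$ in time; it is the \emph{diagonal} difference $\Delta^s_\slant g(t,x)=g(t+s,x+se_i)-g(t,x)$ with the \emph{same} step in both variables, and the decomposition actually used is $\Delta^s_{x_i}u_t=\Delta^s_\slant u_t-\Delta^s_t u_t(\cdot+se_i)$. More importantly, you leave the test function $\xi$ unconstructed and only list desiderata. The paper's choice is explicit: $\xi=\frac{1}{h}\dashint_0^h \Delta^{-s}_\slant\Delta^s_\slant(\partial_\slant u)\,ds$ with $\partial_\slant u=u_t+u_{x_i}$. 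The elliptic term is then handled not by monotonicity~\eqref{eq:hammer} as you suggest, but \emph{variationally}: $A(Du)\cdot D\partial_\slant u=\partial_\slant F(Du)$ is a total derivative in the diagonal direction and telescopes (Corollary~\ref{cor:schraeg}, Lemma~\ref{lem:mixed}). The full first-order bound $\nabla V(Du)\in L^2$ from Proposition~\ref{pro:DV} is used only to absorb leftover $|\Delta^h_\slant V(Du)|^2/h^2$ terms; no half-order spatial differentiability of $V(Du)$ enters.

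The source of the exponent $\tfrac14$ is also not what you describe. Testing with $\partial_\slant u$ controls only the bilinear quantity $S_0=\Delta^s_\slant u_t\cdot\Delta^s_\slant(\partial_\slant u)$ at order $h$; to recover $|\Delta^s_{x_i}u_t|^2$ one must still subtract the cross term $S_1=\Delta^s_\slant u_t\cdot\Delta^s_\slant u_{x_i}$, which expands into four pieces $M_1,\dots,M_4$ (see \eqref{eq:Mterms}). One of these ($M_1$) is again handled by a separate PDE test (Lemma~\ref{lem:ort}), but $M_2,M_3,M_4$ are estimated crudely by Cauchy--Schwarz, pairing $h^{-1}|\Delta^s_t u_t|^2$ (bounded via Proposition~\ref{pro:N12t}) against $\sup_t\int_{\setR^n}|\nabla u|^2$. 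This pairing degrades the rate from $h$ to $\sqrt{h}$ and is precisely where the $\tfrac14$ comes from. Your $Q_h$ with time window $h^2$ does not interface with this mechanism: the time-Nikolskij gain from Theorem~\ref{thm:main} over a window of length $h^2$ is already of order $h^2$, so there is nothing left to ``borrow'' to close the mixed estimate, and without the $\partial_\slant F(Du)$ structure there is no evident way to make the elliptic contribution from your $Q_h$ scale correctly.
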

Theorem~\ref{thm:main} and Theorem~\ref{thm:main2} and \eqref{eq:se} imply the following corollary
\begin{corollary}
 With the same hypothesis as in Theorem~\ref{thm:main2}, we find for $u$ a solution of \eqref{eq} or \eqref{stokes}, that $u_t\in \mathcal{N}^{\frac14,2}_{\local}(Q_T)$ and therefore
$u_t\in L^q_{\local}(Q_T)$, for $q\in [2,2\frac{n+1}{n+\frac12})$, by \eqref{eq:se}.
\end{corollary}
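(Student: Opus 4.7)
The plan is to combine the time-direction estimate of Theorem~\ref{thm:main} with the space-direction estimate of Theorem~\ref{thm:main2} into a single Nikolskij bound of order $\tfrac14$ and exponent $2$ for $u_t$ on compact subsets of $Q_T$, and then to push this through the Nikolskij--Sobolev chain recalled in the introduction together with \eqref{eq:se}, applied in the ambient dimension $n+1$ rather than $n$.

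Fix a compact set $U\Subset Q_T$ and pick $a>0$ so small that $U\subset (a,T-a)\times\setR^n$. By the definition of Nikolskij spaces on a general domain recalled in the introduction, membership of $u_t$ in $\mathcal{N}^{1/4,2}(U)$ amounts to the $L^2(U)$-boundedness of the difference quotient $h^{-1/4}(u_t(\cdot+he_j)-u_t(\cdot))$ for each of the $n+1$ coordinate vectors $e_j$ and all $0<h<h_0$. For the time direction, Theorem~\ref{thm:main} provides the \emph{stronger} bound at order $\tfrac12$, and since $h^{-1/4}=h^{1/4}h^{-1/2}$ with $h\le h_0$, this at once implies the required order-$\tfrac14$ bound. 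For each of the $n$ spatial directions, Theorem~\ref{thm:main2} delivers exactly the order-$\tfrac14$ bound, uniformly on all of $\setR^n$. Summing the $n+1$ resulting contributions and adding the $L^2(U)$ norm of $u_t$ (already controlled by the $L^q$-part of the Nikolskij norm from Theorem~\ref{thm:main}) yields $u_t\in\mathcal{N}^{1/4,2}(U)$; letting $U$ range over an exhaustion of $Q_T$ gives $u_t\in\mathcal{N}^{1/4,2}_{\local}(Q_T)$.

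For the integrability conclusion, the Nikolskij-to-Sobolev inclusion $\mathcal{N}^{\beta,q}\subset W^{\alpha,q}$ for $0<\alpha<\beta<1$ recorded just before \eqref{eq:se} gives $u_t\in W^{\alpha,2}_{\local}(Q_T)$ for every $\alpha<\tfrac14$. Applying \eqref{eq:se} on a Lipschitz subdomain of $Q_T\subset\setR^{n+1}$, so that the relevant dimension is $n+1$ rather than $n$, yields $u_t\in L^{2(n+1)/(n+1-2\alpha)}_{\local}(Q_T)$. Sending $\alpha\uparrow\tfrac14$, the exponent increases to the supremum $\frac{2(n+1)}{n+\tfrac12}=2\tfrac{n+1}{n+\tfrac12}$, so $u_t\in L^q_{\local}(Q_T)$ for every $q\in[2,2\tfrac{n+1}{n+\tfrac12})$, with the lower endpoint $q=2$ coming for free from the $L^2$-part of the Nikolskij norm.

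There is no genuine obstacle here beyond bookkeeping: the substantive analytic work is already contained in Theorems~\ref{thm:main} and~\ref{thm:main2}. The only two points requiring attention are that one may add Nikolskij norms in the distinct coordinate directions to recover a Nikolskij norm on the product domain, and that the Sobolev exponent in \eqref{eq:se} must be computed in the space-time dimension $n+1$ rather than the purely spatial dimension $n$.
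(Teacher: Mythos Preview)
Your proposal is correct and follows exactly the approach the paper indicates: the authors give no separate proof but simply state that the corollary follows from Theorem~\ref{thm:main}, Theorem~\ref{thm:main2} and \eqref{eq:se}, and you have accurately supplied the bookkeeping (downgrading the time estimate from order $\tfrac12$ to $\tfrac14$, combining the coordinate directions, and applying the Nikolskij--Sobolev chain in dimension $n+1$).
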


In case $F$ has p-growth (i.e. Assumption~\ref{ass:p} holds) we can refine the regularity of $u_t$ in dependence of $p$. If $p\geq 2$ we have to restrict to space periodic solutions. I.e. solutions in
\[
V^{1,p}_{\text{per}}(\setR^n)=\overline{\Bigset{g\in C^\infty_0(\setR^n):g(a+x)=f(x)\text{ for some }a\in\setR^n\text{ and }\int_{\setR^n} g=0}}^{\norm{\nabla\cdot}_p}.
\]
This is a closed subspace of $V^{1,p}(\setR^n)$, therefore a solution of \eqref{eq} exists. The solution then is periodic and it is enough to estimate it over one period, which is a bounded cube. These solutions are artificial, however, they are useful to introduce new techniques and new observations. In our case we introduce periodic solutions to empathize possible extensions of regularity for $u_t$ which are to some extend inherited in our techniques.   

\begin{theorem}
 \label{cor:alz}
With the same hypothesis as in Theorem~\ref{thm:main2} but replacing Assumption~\ref{ass:phi} by Assumption~\ref{ass:p}. In case the system \eqref{eq} is in terms of the full gradient we have:
\begin{enumerate} 
\item $p\geq 2$ and $u\in V^{1,p}_{\text{per}}(\setR^n)$, 

 $u_t\in L^2_{\text{loc}}((0,T),\mathcal{N}^{\beta,2}_{\text{loc}}(\setR^n))$. For $\beta=\min\set{\frac{1}{2},\frac14+\frac{1}{p}}$

Moreover, $u_t\in L^2_{\text{loc}}((0,T),L^q_{\text{loc}}(\setR^n)$ for $q\in \big[1,\min\bigset{\frac{2n}{n-\frac{p+4}{2p}},\frac{2n}{n-1}}\big)$, by embedding.  
\item $1<p\leq 2$ and $u\in V^{1,p}(\setR^n)$
$u_t\in L^2((a,T-a),\mathcal{N}^{1,\beta}(\setR^n))$, for 
\[ \beta=\min\Bigset{\frac12,\max\bigset{\frac{p+2-(2-p)\frac{n}2}{2p},\frac34-\frac{n}8(2-p),\frac14}}.
\]
\end{enumerate}
\end{theorem}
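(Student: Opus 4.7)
The plan is to bootstrap Theorem \ref{thm:main2} by exploiting the algebraic structure of pure $p$-growth. The proof of that theorem in fact produces, as an intermediate step, a sharper weighted quotient bound on the nonlinear quantity $V(Du)$, essentially of the form
\[
 \int_a^{T-a}\!\!\int_\Rn \abs{V(Du(t,x+he_i))-V(Du(t,x))}^2\dx\dt \leq K\, h,
\]
from which the $\mathcal{N}^{1/4,2}_x$ bound on $u_t$ is deduced via the elliptic part of \eqref{eq} or \eqref{stokes}. The aim now is to use \eqref{eq:hammer} with $\phi''(t)\sim(\mu+t)^{p-2}$ to pass from this weighted quotient of $Du$ to a quantitative unweighted quotient with a rate in $h$, and then substitute the improved difference bound for $A(\cdot,Du)$ back into the PDE for $u_t$.

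\textbf{Case (a): $p\geq 2$ (periodic).} Here the weight $(\mu+|Du|)^{p-2}$ is increasing, so the weighted estimate above gives
\[
\int (\mu+\abs{Du}+\abs{\Delta_h^i Du})^{p-2}\abs{\Delta_h^i Du}^2\dx\dt \lesssim h.
\]
This dominates $\abs{\Delta_h^i Du}^p$ in the superlinear regime, and in the periodic setting the energy estimate $Du\in L^p$ holds on a full period so no boundary terms obstruct integration by parts in the queer-quotient argument. After interpolation with $L^\infty_tL^2_x\cap L^p_tL^p_x$ one deduces $\int\abs{\Delta_h^i Du}^p\lesssim h^{2/p}$. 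Reinserting this into $\Delta_h^i u_t=\Div(\Delta_h^i A(\cdot,Du))+\Delta_h^i f$, pairing against $\Delta_h^i u_t$ in a time cutoff argument and using $\abs{\Delta_h^i A(\cdot,Du)}\lesssim \phi''(\mu+\abs{Du}+\abs{\Delta_h^i Du})\abs{\Delta_h^i Du}$, one arrives at $\int\abs{\Delta_h^i u_t}^2\lesssim h^{1/2+2/p}$, which gives $\beta=\tfrac14+\tfrac1p$. The cap $\tfrac12$ is the ceiling beyond which the queer-quotient technique stops gaining.

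\textbf{Case (b): $1<p\leq 2$.} Now the weight is \emph{decreasing} and Hölder in the opposite direction fails. Instead, invert \eqref{eq:hammer} pointwise to get
\[
 \abs{\Delta_h^i Du} \lesssim (\mu+\abs{Du}+\abs{\Delta_h^i Du})^{(2-p)/2}\abs{\Delta_h^i V(Du)},
\]
and upgrade the integrability of $Du$ using that $V(Du)\in L^2_t\mathcal{N}^{1/2,2}_x$ embeds into $L^2_tL^{2n/(n-1)}_x$, yielding $Du\in L^p_tL^{np/(n-1)}_x$. Two independent routes now emerge. Feeding the improved $L^q$-integrability of $\abs{Du}$ directly into a Hölder control of $\int\abs{\Delta_h^i A(\cdot,Du)}^2$ produces the candidate $\beta=\frac{p+2-(2-p)n/2}{2p}$. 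Alternatively, interpolating the $\mathcal{N}^{1/2,2}$-bound on $V(Du)$ against the new $L^q$-bound on $Du$ through the inversion identity produces $\beta=\tfrac34-\tfrac{n}{8}(2-p)$. The final exponent is the best of these two candidates and the baseline $\tfrac14$ from Theorem~\ref{thm:main2}, capped by $\tfrac12$.

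\textbf{Main obstacle.} The hardest part is the sharp bookkeeping of the Hölder/interpolation exponents so that the nonlinear factor $\phi''(\mu+\abs{Du})$ is absorbed cleanly at every step; any slack propagates through the bootstrap and deteriorates~$\beta$, which is why the result for $1<p\leq 2$ must be split into three regimes. A secondary but essential point is that in case (a) the test-function step needs spatial integration by parts with no boundary residue, forcing the periodic restriction; in case (b) the weight $(\mu+\abs{Du})^{p-2}$ is already integrable against the gradient, so the argument goes through directly on $\Rn$.
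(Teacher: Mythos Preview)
Your proposal has a genuine gap in the central mechanism. You want to take the spatially differenced equation
\[
\Delta_h^i u_t = \divergence\big(\Delta_h^i A(\cdot,Du)\big)+\Delta_h^i f
\]
and pair it against $\Delta_h^i u_t$. But then the elliptic cross term becomes
\[
\int \Delta_h^i A(\cdot,Du)\cdot D\,\Delta_h^i u_t\,\dx\dt,
\]
which involves $\partial_t\Delta_h^i Du$ and carries no sign: writing it out, you get $\partial_t[F(Du(x{+}he_i))+F(Du(x))]$ plus two uncontrolled mixed terms $A(Du(x{+}he_i))\cdot\partial_tDu(x)$ and $A(Du(x))\cdot\partial_tDu(x{+}he_i)$. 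No available estimate absorbs these, so the pairing does not close. The same issue blocks your route in case~(b): a bound on $\int|\Delta_h^i A(\cdot,Du)|^2$ does not translate into a bound on $\int|\Delta_h^i u_t|^2$ without passing through $\divergence$, which costs one more derivative than you have.

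The paper's argument is structurally different. It does not re-test the PDE at all; instead it re-enters the proof of the baseline $\tfrac14$-estimate (Proposition~\ref{thm:alz}) and observes that every term there is already of order $h$ \emph{except} the three products
\[
M_2=\Delta^s_{x_i}(u_t)\cdot\Delta^s_t(u_{x_i}),\quad M_3=\Delta^s_t(u_t)\cdot\Delta^s_{x_i}(u_{x_i}),\quad M_4=\Delta^s_t(u_t)\cdot\Delta^s_t(u_{x_i}).
\]
These are the only bottleneck terms, and crucially they involve \emph{time} differences of $u_{x_i}$, not spatial ones. One then sets $2\beta=\min\{\tfrac12+\theta,1\}$ and uses Young's inequality to reduce to controlling $h^{-2\theta}|\Delta^s_t u_{x_i}|^2$. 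For $p\ge2$ the choice $\theta=2/p$ and a H\"older step give $h^{-4/p}|\Delta^s_t u_{x_i}|^2\le c\,|D^s_t V(\nabla u)|^{4/p}$, which is only in $L^1_{\text{loc}}$ since $4/p\le 2$; this is precisely why periodicity is needed, not the boundary-free integration by parts you suggest. For $1<p\le2$ a Young splitting yields a $|D^s_t V(\nabla u)|^2$ piece plus $|\nabla u|^{(2-\theta p)/(1-\theta)}$, and $\theta$ is chosen so that this exponent matches the improved integrability $q=\max\{p(n{+}1)/(n{-}1),\,p{+}4/n,\,2\}$ coming from Sobolev embedding of $V(Du)$; the two values of $q$ produce the two formulas inside the $\max$.
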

For \eqref{stokes} this improvement does not work. However, it is possible to show $u_t$ has a $\beta$-fractional symmetric gradient. See Remark~\ref{rem:fracsym}.

We conclude the section with the following corollary. It collects the differentiability regularity on $u_t$ and Sobolev embedding \eqref{eq:se}. 
\begin{corollary}
With the same hypothesis as in Theorem~\ref{cor:alz} we have
  $u_t\in L^\infty_\local((0,T),L^2(\setR^n))\cap L^a_\local(Q_T)\cap L^2_\local((0,T),L^q_\local(\Omega)$.
If 
\begin{enumerate}
 \item[(2D)] $q\in[1,4), a\in[1,3)$ for $p\in(1,4]$ or $q\in [1,\frac{8}{3}], a\in [1,\frac{12}{5}]$ for $p\in (1,\infty)$, 
  \item[(3D)] $q\in [1,3), a\in [1,\frac{8}{3})$ for $p\in[\frac{4}{3},4]$ or $q\in [1,\frac{12}{5}], a\in [1,\frac{16}{7}]$ for $p\in (1,\infty)$,
\item[(nD)] $q\in [1,\frac{2n}{n-1}), a\in [1,2\frac{2(n+1)}{n})$ for $p\in [2-\frac{2}{n},4]$ or $q\in [1,\frac{2n}{n-\frac12}], a\in [1,\frac{n+1}{n+\frac12}]$ for $p\in (2-\frac{4}{n},\infty)$.
\end{enumerate}
\end{corollary}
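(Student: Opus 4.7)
The proof amounts to collecting the three pieces of regularity already established for $u_t$ and invoking the fractional Sobolev embedding \eqref{eq:se} twice: once in space alone and once on the full space-time domain. From Proposition~\ref{pro:Vt} we already have $u_t\in L^\infty_\local((0,T),L^2(\setR^n))$; Theorem~\ref{thm:main} provides $u_t\in \mathcal{N}^{1/2,2}_\local((0,T),L^2(\setR^n))$; and Theorem~\ref{cor:alz} provides $u_t\in L^2_\local((0,T),\mathcal{N}^{\beta,2}_\local(\setR^n))$, where $\beta=\beta(n,p)\in(\tfrac14,\tfrac12]$ is the Nikolskij exponent determined there. The $L^\infty_\local L^2$ factor of the claim is then nothing but Proposition~\ref{pro:Vt}.

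For the spatial integrability $u_t\in L^2_\local((0,T),L^q_\local(\setR^n))$ I would apply, inside the Bochner norm, the chain $\mathcal{N}^{\beta,2}(A)\subset W^{\beta',2}(A)$ (valid for every $\beta'<\beta$) and then \eqref{eq:se} on a Lipschitz subdomain of $\setR^n$. This produces $L^q_\local$-integrability in space up to the exponent $\tfrac{2n}{n-2\beta'}$.

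For the joint integrability $u_t\in L^a_\local(Q_T)$ the key step is to view $u_t$ as an $(n+1)$-dimensional Nikolskij function. Because $\beta\leq \tfrac12$, Theorem~\ref{thm:main} also yields $u_t\in\mathcal{N}^{\beta,2}_\local((0,T),L^2(\setR^n))$; combining this with Theorem~\ref{cor:alz} and recalling that $\mathcal{N}^{\beta,2}$ on a product domain was defined through control of the $\beta$-Nikolskij difference in every coordinate direction, one obtains $u_t\in\mathcal{N}^{\beta,2}_\local(Q_T)$. Applying \eqref{eq:se} in the $(n+1)$-dimensional setting (again via the inclusion in $W^{\beta',2}$) then yields $u_t\in L^a_\local(Q_T)$ for every $a<\tfrac{2(n+1)}{(n+1)-2\beta'}$. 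This identification of the intersection of the two Bochner spaces with the full Nikolskij space on $Q_T$ is the only genuinely delicate point; the rest is bookkeeping.

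Finally I would plug in the values of $\beta$ from Theorem~\ref{cor:alz} to read off the two different regimes. In the first regime of each dimension (e.g.\ $p\in[2-\tfrac2n,4]$) one may take $\beta=\tfrac12$ exactly, which is borderline and forces $\beta'<\tfrac12$, producing the half-open intervals with suprema $\tfrac{2n}{n-1}$ for $q$ and $\tfrac{2(n+1)}{n}$ for $a$; specializing to $n=2,3$ recovers the first stated ranges. In the wider regime $p\in(2-\tfrac4n,\infty)$ only $\beta>\tfrac14$ strictly is guaranteed, but this strict inequality already allows the choice $\beta'=\tfrac14$ exactly, producing the \emph{closed} ranges with maxima $\tfrac{2n}{n-1/2}$ for $q$ and $\tfrac{4(n+1)}{2n+1}$ for $a$; for $n=2,3$ this gives the announced values $12/5$ and $16/7$.
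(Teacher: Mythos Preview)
Your proposal is correct and follows exactly the approach the paper indicates: the corollary is stated there without proof, the only hint being that it ``collects the differentiability regularity on $u_t$ and Sobolev embedding \eqref{eq:se}'', which is precisely what you do. Your handling of the two regimes (exact $\beta=\tfrac12$ giving open intervals, strict $\beta>\tfrac14$ allowing the closed endpoint $\beta'=\tfrac14$) is the right reading, and your formula $a<\tfrac{2(n+1)}{n+1-2\beta'}$ indeed specializes to the stated $2$D and $3$D values; the discrepancies you may notice in the paper's (nD) line are typos there, not errors in your argument.
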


\section{Preliminary}
\subsection{Notation}
We will write $g\sim h$ if there exist two constants $c,C$, such that $cg\leq h\leq Cg$.
For a set $A$ with finite measure, we use
\[
 \dashint_A g dx:=\frac{1}{\abs{A}}\int_A g dx.
\]

We define for $g:\setR^n\to\setR^N$
\[
 \Delta^h_{x_i}(g):=(g(x_1,..,x_i+h,...,x_n)-g(x_1,..,x_i,...,x_n))
\] 
and
\[
D^h_{x_i}(g):=\frac1h(g(x_1,..,x_i+h,...,x_n)-g(x_1,..,x_i,...,x_n)).
\]
For an arbitrary direction $v\in\mathcal{S}^{n-1}$ we define
\[
 \Delta^h_{v}(g):=(g(x+hv)-g(x))
\] 
and
\[
D^h_{v}(g):=\frac1h(g(x+hv)-g(x)).
\]
 
Moreover, we define the "Queer Quotient" as
\[
 \Delta_{\nearrow}^h(g):=(g(x_1,..,x_i+h,...,x_j+h,...,x_n)-g(x_1,..,x_i,...,x_j+h,...,x_n)),
\]
and
\[
 D_{\nearrow}^h(g):=\frac1h(g(x_1,..,x_i+h,...,x_j+h,...,x_n)-g(x_1,..,x_i,...,x_j+h,...,x_n)),
\]
for arbitrary $i,j$.

Remark, that in case of $u$ being a solution of \eqref{stokes}, difference-quotients of $u$ are divergence-free and therefore  suitable test functions for \eqref{weak}.

We will use the following partial summation rules, without further reference
\[
 \int_a^b\Delta^h(f)(x)g(x)\dx=\int_{a+h}^bf(x)\Delta^{-h}(g)(x)\dx+\int_b^{b+h}f(x)g(x-h)-\int_a^{a+h}f(x)g(x),
\]
\[
 \int_a^b\Delta^{-h}(f)(x)g(x)\dx=\int_{a}^{b-h}f(x)\Delta^{h}(g)(x)\dx-\int_{b-h}^{b}f(x)g(x)+\int_{a-h}^{a}f(x+h)g(x)
\]
and
\[
 \int_a^b D^h(f)(x)g(x)\dx=-\int_{a+h}^bf(x)D^{-h}(g)(x)\dx+\dint_b^{b+h}f(x)g(x-h)-\dint_a^{a+h}f(x)g(x).
\]
Please notice also the following cancellation property we will use
\begin{align}
\label{basic:1}
 \int_a^bf(t+h)-f(t-h)\dt=\int_{a+h}^{b+h}f(t)\dt-\int_{a-h}^{b-h}f(t)\dt=\int_{b-h}^{b+h}f(t)\dt-\int_{a-h}^{a+h}f(t)\dt.
\end{align}
Finally the following observation is needed (for the sake of completeness).
We find for $g\in W^{1,1}((a,b))$ and for  $s<h$ 
\begin{align}
\label{basic:2}
 \int_a^{b-h}\abs{(D^sg)(t)}dt\leq \int_a^{b-h}\dashint_0^s\abs{g'(t+s)}dsdt=\dashint_0^s\int_{a+s}^{b-h+s}\abs{g'(t)}dt ds\leq \int_a^b\abs{g'(t)} dt.
\end{align}

\subsection{Preliminary estimates}
We start with the following elementary pointwise estimate for real functions. It is, however, essential for all our estimates.
We use the following estimate which is a direct consequence of \cite[Lemma 2.7]{DieSV09}. 
If $\phi''(t)t^2\sim \phi(t)$ and $a_0,a_1\in\setR^n$ we have 
\begin{align}
 \label{eq:equiv}
\phi''(\abs{a_1}+\abs{a_0})\sim\int_0^1 \phi''(\abs{\theta a_0+(1-\theta)a_1})d\theta.
\end{align}
Observe, that although the estimates in Lemma~\ref{lem:t} and Corollary~\ref{cor:schraeg} below are purely analytic (i.e. not depending on the PDE). We use the notation of the PDE as we will use it.
\begin{lemma}
\label{lem:t}
 Let $F$ hold Assumption~\ref{ass:p} or \ref{ass:phi}. And $Du:\Omega \to\RNn$. There exists constants depending only on the characteristics of $F$, such that for every $h>0$ the following identities hold. 
\begin{enumerate}
 \item $A(Du)(t)\cdot D^h_{t}(Du)(t)+\frac{c_1}{h}\abs{\Delta^{h}_{t}V(Du)(t)}^2\leq D^h_{t}F(Du)(t)\leq A(Du)(t)\cdot D^h_{t}(Du)(t)+\frac{c_2}{h}\abs{\Delta^{h}_{t}V(Du)(t)}^2$,
\item $A(Du)(t)\cdot D^{-h}_{t}(Du)(t)-\frac{c}{h} \abs{\Delta^{-h}_{t}V(Du)(t)}^2\leq D^{-h}_{t}F(Du)(t)\leq A(Du)(t)\cdot D^{-h}_{t}(Du)(t)+\frac{c}{h} \abs{\Delta^{-h}_{t}V(Du)(t)}^2$,
\item \begin{align*}&A(Du)(t)\cdot \frac{1}{2h}\big(Du(t+h)-Du(t-h)\big)+\frac{c_1}{2h}\abs{\Delta^{h}_{t}V(Du)(t)}^2-\frac{c}{2h} \abs{\Delta^{-h}_{t}V(Du)(t)}^2\\
&\leq \frac{1}{2h} \big(F(Du(t+h))-F(Du(t-h))\big)\\
&\leq
A(Du)(t)\cdot \frac{1}{2h}\big(Du(t+h)-Du(t-h)\big)+\frac{c_2}{2h}\abs{\Delta^{h}_{t}V(Du)(t)}^2+\frac{c}{2h} \abs{\Delta^{-h}_{t}V(Du)(t)}^2\end{align*}
%+\frac{1}{h} \abs{\Delta^{-h}_{t}V(Du)(t)}-\frac{1}{h}\abs{\Delta^{h}_{t}V(Du)(t)}^2$
\end{enumerate}
for any fixed value. I.e. $A(\cdot):=A(x,\cdot)$ and $F(\cdot):=F(x,\cdot)$.
\end{lemma}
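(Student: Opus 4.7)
All three assertions are pointwise in $x$ and reduce to a one-variable Taylor-type identity, so my plan is first to establish the finite-increment bound
\begin{align*}
F(b) - F(a) - A(a)\cdot(b-a) \sim \abs{V(b) - V(a)}^2
\end{align*}
with constants depending only on the characteristics of $F$, and then to specialize to $a = Du(t)$, $b = Du(t\pm h)$.

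For the Taylor-type bound, I would apply the fundamental theorem of calculus twice and write
\begin{align*}
F(b) - F(a) - A(a)\cdot(b-a) = \int_0^1 (1-\sigma)\, D_Q A(a + \sigma(b-a))(b-a)\otimes (b-a)\, d\sigma.
\end{align*}
By Assumption~\ref{ass:phi}(e) the integrand is comparable to $(1-\sigma)\abs{b-a}^2 \phi''(\mu + \abs{a + \sigma(b-a)})$, and the remaining task is to identify this $\sigma$-integral with $\phi''(\mu + \abs{a} + \abs{b-a})\abs{b-a}^2$, which by \eqref{eq:hammer2} coincides with $\abs{V(b) - V(a)}^2$ up to the characteristics of $F$. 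This step is essentially the content of \eqref{eq:equiv}, apart from the harmless weight $(1-\sigma)$.

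Once the Taylor bound is in place, part (a) follows by taking $a = Du(t)$, $b = Du(t+h)$ and dividing by $h$: the remainder is nonnegative of size $\sim \abs{\Delta^h_t V(Du)(t)}^2/h$, so both the upper and lower estimates in (a) drop out. For part (b) I would take $a = Du(t)$, $b = Du(t-h)$ and divide by $-h$; the nonnegative remainder then flips sign, producing a one-sided correction that still fits into the symmetric two-sided bound of the statement. Part (c) is obtained by adding (a) and (b) and using the algebraic identity
\begin{align*}
\tfrac{1}{2}\bigl(D^h_t g(t) + D^{-h}_t g(t)\bigr) = \tfrac{1}{2h}\bigl(g(t+h) - g(t-h)\bigr)
\end{align*}
applied separately to $g = F(Du)$ and $g = Du$.

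I expect the only nonroutine step to be the equivalence of the $\sigma$-integral with $\phi''(\mu + \abs{a} + \abs{b-a})\abs{b-a}^2$: one controls $\phi''(\mu + \abs{a + \sigma(b-a)})$ by $\phi''(\mu + \abs{a} + \sigma\abs{b-a})$ via the doubling property $\phi'' t^2 \sim \phi$ from Assumption~\ref{ass:phi1}, and then absorbs the $(1-\sigma)$ weight by a short case split on whether $\abs{b-a}$ is comparable to or dominates $\mu + \abs{a}$. Everything else is bookkeeping with the fundamental theorem of calculus and the sign conventions of forward and backward difference quotients.
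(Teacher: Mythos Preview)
Your proposal is correct and essentially matches the paper's proof: the paper applies the fundamental theorem of calculus once and then invokes assumption~(c) directly on the increment $(A(a+s(b-a))-A(a))\cdot s(b-a)$, whereas you integrate once more and invoke~(e) on $D_QA$, but by Remark~\ref{rem:1} these are equivalent and both land on the same weighted $\phi''$--integral (weight $s$ in the paper, $1-\sigma$ for you) that is disposed of via \eqref{eq:equiv} and the doubling of $\phi''$. One small correction: the finite--increment equivalence $\phi''(\mu+|a|+|b-a|)\,|b-a|^2\sim|V(b)-V(a)|^2$ you appeal to is \eqref{eq:hammer}, not \eqref{eq:hammer2}.
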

\begin{proof}
We use the monotonicity of Assumption~\ref{ass:phi}, to find
\begin{align*}
& D^h_{t}F(\cdot, D u)(t)=\frac1h\int_0^1\frac{d}{ds}F(sDu(t+h)+(1-s)Du(t))\ds\\
&=\int_0^1A(sDu(t+h)+(1-s)Du(t))\cdot D^h_t(Du)(t)\ds\\
&=\int_0^1(A(sDu(t+h)+(1-s)Du(t))-A(Du(t)))\cdot D^h_t(Du)(t)\ds\\
&\quad +\int_0^1A(Du(t))\cdot D^h_t(Du)(t)\ds\\
&=\int_0^1\frac{1}{sh}(A(Du(t)+s(Du(t+h)-Du(t)))-A(Du(t)))\cdot s(Du(t+h)-Du(t))\ds\\
&\quad +A(Du(t))\cdot D^h_t(Du)(t)
\end{align*}
We estimate using Assumption~\ref{ass:phi}(d) and \eqref{eq:hammer} to gain
\begin{align*}
 \int_0^1&\frac{1}{sh}\big(A(Du(t)+s\Delta^h_tDu(t))-A(Du(t))\big)\cdot s(\Delta^h_t(Du)(t))\ds\\
&\sim \int_0^1\frac{1}{sh}\phi''(\mu+\abs{s\Delta^{h}_t(Du)(t)}+\abs{Du(t)})s^2\abs{\Delta_t^{h}Du}^2\ds.
\end{align*}
Because $\abs{s\Delta^{h}_t(Du)(t)}+\abs{Du(t)}= s(\abs{\Delta^{h}_t(Du)(t)}+\abs{Du(t)})+(1-s)\abs{Du(t)}$ we use \eqref{eq:equiv}, Assumption~\ref{ass:phi} and \eqref{eq:hammer} to find
\begin{align*} \int_0^1&\frac{1}{sh}\phi''(\mu+\abs{s\Delta^{h}_t(Du)(t)}+\abs{Du(t)})s^2\abs{\Delta_t^{h}Du}^2\ds\sim \frac1{h}\abs{\Delta^{h}_{t}V(Du)(t)}^2.
\end{align*} 
This implies the first estimate.

We redo the argument to find that
\begin{align*}
D^{-h}_t&F(Du)(t)= D^h_{t}F(D u)(t-h)\\
&=\frac1h\int_0^1\frac{d}{ds}F(sDu(t)+(1-s)Du(t-h))\ds\\
&=\int_0^1A(sDu(t)+(1-s)Du(t-h))\cdot D^h_t(Du)(t-h)\ds\\
&=\int_0^1\big(A(Du(t)+(1-s)(-\Delta^hDu(t-h)))-A(Du(t))\big)\cdot D^h_t(Du)(t-h)\ds\\
&\quad +\int_0^1A(Du(t))\cdot D^h_t(Du)(t)\ds\\
&= -\int_0^1\frac{1}{(1-s)h}\big(A(Du(t)-(1-s)\Delta^h_tDu(t-h))-A(Du(t))\big)\cdot (1-s)(-\Delta^h_t(Du)(t-h))\ds\\
&\quad +A(Du(t))\cdot D^h_t(Du)(t-h)
\end{align*}
 We estimate again using Assumption~\ref{ass:phi}(d) and \eqref{eq:hammer}
\begin{align*} \int_0^1&\frac{1}{(1-s)h}\big(A(Du(t)+(1-s)\Delta^h_tDu(t-h))-A(Du(t))\big)\cdot (1-s)(-\Delta^h_t(Du)(t))\ds\\
&\sim \int_0^1\frac{1}{(1-s)h}\phi''(\mu+\abs{(1-s)\Delta^{-h}_t(Du)(t)}+\abs{Du(t)})(1-s)^2\abs{\Delta_t^{-h}Du}^2\ds
\end{align*}
Because $\abs{s\Delta^{-h}_t(Du)(t)}+\abs{Du(t)}= s(\abs{\Delta^{-h}_t(Du)(t)}+\abs{Du(t)})+(1-s)\abs{Du(t)}$ we can use \eqref{eq:equiv}, \eqref{eq:hammer} and Assumption~\ref{ass:phi} to find
\begin{align*} \int_0^1&\frac{1}{(1-s)h}\phi''(\mu+\abs{(1-s)\Delta^{-h}_t(Du)(t)}+\abs{Du(t)})(1-s)^2\abs{\Delta_t^{-h}Du}^2\ds\\ 
&\sim \frac1{h}\abs{\Delta^{-h}_{t}V(Du)(t)}^2.
\end{align*} 
This implies the second estimate.

For the third estimate we use
\begin{align*}
 &A(Du(t))\cdot (Du(t+h)-Du(t-h))\\
&\quad=A(Du(t))(Du(t+h)-Du(t))-A(Du(t))\cdot(Du(t-h)-Du(t)),
\end{align*}
therefore 
\[
 A(Du(t))\cdot \frac{1}{2h}\big(Du(t+h)-Du(t-h)\big)=\frac12\big(A(Du(t))D^h_t(Du(t))+A(Du(t))D^{-h}_t(Du(t))\big),
\]
 which implies by estimate one and two the third estimate because
\[
  D^h_{t}F(Du)(t)+ D^{-h}_{t}F(Du)(t)= \frac1{h} \Big(F(Du(t+h)-F(Du(t-h))\Big).
\]

% \begin{align*}
%  A(Du(t))&\cdot\frac{1}{2h}\big(Du(t+h)-Du(t-h)\big)\leq \frac{1}{2}\Big(D^{h}_tF(\abs{Du})+D^{-h}_tF(\abs{Du})\\
% &+c\int_0^1\frac{1-s}{h}\phi''(\abs{(1-s)\Delta^{-h}_t(Du)(t)}+\abs{Du(t)})\abs{\Delta_t^{-h}Du}^2\ds\\
% &- c_1\int_0^1\frac{s}{h}\phi''(\abs{s\Delta^h_t(Du)(t)}+\abs{Du(t)})\abs{\Delta_t^hDu}^2\ds\Big)
% \end{align*}
% Now we use again \eqref{eq:equiv} (as before) to find the third estimate.
% \begin{align*}
%  A(Du(t))&\cdot D^{2h}_t(Du(t-h))\sim\frac1{2h}\big(F(\abs{Du(t+h)})-F(\abs{Du(t-h)})\big)\\
% &\quad+\frac{c_2}{h}\abs{\Delta^{-h}_t(V(Du(t)))}^2-\frac{c_3}{h}\abs{\Delta^{h}_t(V(Du(t)))}^2.
% \end{align*}
\end{proof}
\begin{corollary}
\label{cor:schraeg}
For every $h>0$ and for $v\in\mathcal{S}^{n}$ the following identities hold. 
\begin{enumerate}
 \item $A(Du)(z)\cdot D^h_{v}(Du)(z)+\frac{c_1}h\abs{\Delta^{h}_{v}V(Du)(z)}\leq D^h_{v}F(Du)(z) \leq A(Du)(z)\cdot D^h_{v}(Du)(z)+\frac{c_1}h\abs{\Delta^{h}_{v}V(Du)(z)}$,
\item $A(Du)(z)\cdot D^{-h}_{v}(Du)(z)-\frac{c}h\abs{\Delta^{h}_{v}V(Du)(z)}\leq D^{-h}_{v}F(Du)(z) \leq A(Du)(z)\cdot D^{-h}_{v}(Du)(z)+\frac{c}h\abs{\Delta^{h}_{v}V(Du)(z)}$,
\item \begin{align*}
       &A(Du)(z)\cdot \frac{1}{2h}\big(Du(z+hv)-Du(x-hv)\big)+\frac{c_1}{2h}\abs{\Delta^{-h}_{v}V(Du)(z)}-\frac{c}{2h}\abs{\Delta^{h}_{v}V(Du)(z)}\\
&\leq \frac{1}{2h} \big(F(Du(z+hv))-F(Du(z-hv))\big)\\
&\leq  A(Du)(z)\cdot \frac{1}{2h}\big(Du(z+hv)-Du(x-hv)\big)+\frac{c_2}{2h}\abs{\Delta^{-h}_{v}V(Du)(z)}+\frac{c}{2h}\abs{\Delta^{h}_{v}V(Du)(z)}.
      \end{align*}
\end{enumerate}
The constants only depend on the characteristics of $F$.
\end{corollary}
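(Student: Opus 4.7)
The key observation is that Corollary~\ref{cor:schraeg} is a purely algebraic/pointwise statement about the convex integrand $F(x,\cdot)$ evaluated at two (or three) values of its matrix argument, with $x$ held fixed; the direction $v\in\mathcal{S}^{n-1}$ along which the difference quotient is taken plays no role beyond giving us two evaluation points $Du(z)$ and $Du(z\pm hv)$. Consequently the proof should be a verbatim translation of the proof of Lemma~\ref{lem:t}, replacing the time-shift $t\mapsto t+h$ by the spatial shift $z\mapsto z+hv$ throughout. (I note in passing that the right-hand sides of (a)--(c) as printed appear to be missing a square on $\abs{\Delta^h_v V(Du)}$; the argument below produces the squared version, matching Lemma~\ref{lem:t}.)

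For part (a) the plan is to apply the fundamental theorem of calculus along the segment $[Du(z),Du(z+hv)]$, writing
\begin{equation*}
D^h_v F(Du)(z)=\int_0^1 A\bigl(sDu(z+hv)+(1-s)Du(z)\bigr)\cdot D^h_v(Du)(z)\,ds.
\end{equation*}
Adding and subtracting $A(Du(z))\cdot D^h_v(Du)(z)$ splits this into the desired linear term plus a monotonicity remainder
\begin{equation*}
\int_0^1\frac{1}{sh}\bigl(A(Du(z)+s\Delta^h_v Du(z))-A(Du(z))\bigr)\cdot s\Delta^h_v Du(z)\,ds.
\end{equation*}
By Assumption~\ref{ass:phi}(e) and the equivalence \eqref{eq:hammer}, the integrand is comparable to $\frac{1}{sh}\phi''(\mu+\abs{s\Delta^h_v Du(z)}+\abs{Du(z)})\,s^2\abs{\Delta^h_v Du(z)}^2$; integrating in $s$ and using the averaging identity \eqref{eq:equiv} together with \eqref{eq:hammer} rewrites the result as $\frac{c}{h}\abs{\Delta^h_v V(Du)(z)}^2$, yielding (a).

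For part (b) one uses the identity $D^{-h}_v F(Du)(z)=D^h_v F(Du)(z-hv)$ and repeats the calculation with the natural substitution $s\leftrightarrow 1-s$, exactly as in the second half of the proof of Lemma~\ref{lem:t}; the same combination of \eqref{eq:hammer} and \eqref{eq:equiv} produces $\frac{c}{h}\abs{\Delta^{-h}_v V(Du)(z)}^2$ (not $\Delta^h_v$ — another likely typo in the statement). Part (c) then follows by adding (a) and (b) after observing
\begin{equation*}
A(Du(z))\cdot\tfrac{1}{2h}\bigl(Du(z+hv)-Du(z-hv)\bigr)=\tfrac12\bigl(A(Du(z))D^h_v Du(z)+A(Du(z))D^{-h}_v Du(z)\bigr),
\end{equation*}
together with the cancellation $D^h_v F(Du)(z)+D^{-h}_v F(Du)(z)=\frac{1}{h}(F(Du(z+hv))-F(Du(z-hv)))$.

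There is essentially no obstacle: the argument is a direct transcription of Lemma~\ref{lem:t}, because the three equivalences \eqref{eq:hammer}, \eqref{eq:equiv}, and Assumption~\ref{ass:phi}(e) that drive it are pointwise inequalities insensitive to whether the two arguments $Du(z)$, $Du(z+hv)$ arise from a time shift, a coordinate shift, or any other comparison of two matrices. The only item worth double-checking is that the constants depend only on the characteristics of $F$ (and not on $v$), which is immediate since every estimate is applied pointwise in $z$ with $x$ frozen.
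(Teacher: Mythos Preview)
Your proposal is correct and matches the paper's approach exactly: the paper's proof of Corollary~\ref{cor:schraeg} is a single sentence stating that the arguments in the proof of Lemma~\ref{lem:t} work for arbitrary directions, so the proof is line by line the same. Your observation about the missing squares on $\abs{\Delta^{h}_{v}V(Du)(z)}$ (and the $\Delta^{h}_{v}$ versus $\Delta^{-h}_{v}$ mismatch in (b)) is also correct; these are typos in the statement, as comparison with Lemma~\ref{lem:t} confirms.
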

\begin{proof}
 As the arguments in the proof of Lemma~\ref{lem:t} works for arbitrary directions the proof is line by line the same.
\end{proof}
  We will need the following analytic Lemma, which was hinted to us by L. Diening. A similar estimate can be found in \cite[Lemma 12]{DieE08}. It is an integral characterization of Nikolskij spaces. 
\begin{lemma}
\label{lem:diening}
 Let $1\leq q\leq\infty$. If $f\in L^q([a,b+H])$. If 
\begin{align}
\label{eq:die}
 \sup_{H\geq h>0}\Bignorm{\dint_{\nsk 0}^h\abs{f(t+s)-f(t)}\ds}_{L^q([a,b+h])} \leq h^\alpha K, 
\end{align}
then $f\in \mathcal{N}^{\alpha,q}([a,b])$, moreover
\[
 \sup_{\frac{H}{2}\geq h>0}\norm{\abs{f(t+h)-f(t)}}_{L^q([a,b])}\leq 3h^\alpha K. 
\]
\end{lemma}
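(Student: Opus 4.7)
The plan is to combine a pointwise decomposition of the forward difference into two averaged quantities with a Gronwall-type integral inequality. I would start from the tautology $f(t+h)-f(t) = \dashint_0^h(f(t+s)-f(t))\,ds + \dashint_0^h(f(t+h)-f(t+s))\,ds$; taking absolute values and performing the substitution $u=h-s$, $\tau=t+h$ in the second integral yields the pointwise estimate $|f(t+h)-f(t)|\le g(t,h)+\tilde g(t+h,h)$, with $g(t,h):=\dashint_0^h|f(t+s)-f(t)|\,ds$ the forward average directly controlled by the hypothesis, and $\tilde g(\tau,h):=\dashint_0^h|f(\tau)-f(\tau-u)|\,du$ a backward-averaged analogue. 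Taking $L^q([a,b])$-norms and noting $[a,b]\subset[a,b+h]$, the $g$-term is immediately bounded by $h^\alpha K$ via \eqref{eq:die}.

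The second term I would rewrite as $\norm{\tilde g(\cdot,h)}_{L^q([a+h,b+h])}$ and apply Minkowski's integral inequality, followed by the substitution $t=\tau-u$ inside the inner norm, to obtain
\[
\norm{\tilde g(\cdot,h)}_{L^q([a+h,b+h])} \le \dashint_0^h \norm{f(\cdot+u)-f(\cdot)}_{L^q([a+h-u,b+h-u])}\,du.
\]
The key structural observation is that every shifted interval $[a+h-u,b+h-u]$ has length $b-a$ and is contained in $[a,b+h]$. Letting $N(u)$ denote the supremum of $\norm{f(\cdot+u)-f(\cdot)}_{L^q(J)}$ over such length-$(b-a)$ subintervals $J\subset[a,b+h]$ to which the same argument applies, the two bounds combine into the self-bounding inequality
\[
N(h) \le h^\alpha K + \dashint_0^h N(u)\,du.
\]

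The closure is a standard Gronwall manipulation with integrating factor $1/h$: setting $\Phi(h):=\int_0^h N(u)\,du$, the inequality becomes $(\Phi(h)/h)'\le Kh^{\alpha-1}$. Since $f\in L^q([a,b+H])$ implies $N(u)\to 0$ as $u\to 0^+$ by the $L^q$-continuity of translations, one has $\Phi(h)/h\to 0$ at the origin; integrating from $0$ to $h$ then yields $\Phi(h)/h\le Kh^\alpha/\alpha$, and hence $N(h)\le(1+1/\alpha)Kh^\alpha$, which gives precisely $3Kh^\alpha$ for the Nikolskij index $\alpha=\tfrac12$ of interest in the paper. The main obstacle I anticipate is the careful bookkeeping of the integration domains: the hypothesis \eqref{eq:die} controls $g$ on $[a,b+h]$ only, so I need to verify that every shifted interval produced by the Minkowski step in fact stays inside $[a,b+h]$ and is a translate of $[a,b]$ of the same length, so that the supremum defining $N$ really self-bounds; once this is in place the Gronwall step is routine and the needed vanishing $\lim_{u\to 0^+}N(u)=0$ comes for free from the $L^q$-integrability of $f$.
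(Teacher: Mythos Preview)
Your decomposition and Minkowski step are correct, but the Gronwall closure does not go through, and the obstacle is exactly the interval bookkeeping you flagged. The hypothesis \eqref{eq:die} controls the forward average $g(\cdot,h')$ only on the specific interval $[a,b+h']$, with left endpoint anchored at $a$ and right endpoint tied to the averaging scale $h'$. Your recursion, however, produces the shifted intervals $J_u=[a+h-u,\,b+h-u]$; to feed these back into $N$ you must bound the $g$-term $\norm{g(\cdot,u)}_{L^q(J_u)}$ by $u^\alpha K$, i.e.\ you need $J_u\subset[a,b+u]$. But $b+h-u\le b+u$ is equivalent to $u\ge h/2$, so for $u\in(0,h/2)$ the interval $J_u$ protrudes past $b+u$ and the hypothesis says nothing there. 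Any definition of $N$ that is broad enough to absorb the drifting intervals (say, sup over all length-$(b-a)$ subintervals of $[a,b+H]$) then fails at the first step, because the $g$-bound $h^\alpha K$ is no longer available on those larger intervals. The self-bounding inequality $N(h)\le h^\alpha K+\dashint_0^h N(u)\,du$ therefore cannot be derived from \eqref{eq:die} as stated.

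The paper avoids recursion altogether by inserting the \emph{forward-shifted} average $\dashint_0^h f(t+h+s)\,ds$ rather than $\dashint_0^h f(t+s)\,ds$. This yields
\[
|f(t+h)-f(t)|\le \dashint_0^h|f(t+h)-f(t+h+s)|\,ds+2\dashint_0^{2h}|f(t)-f(t+s)|\,ds,
\]
two forward averages that are each \emph{directly} covered by the hypothesis: the first with scale $h$ on $[a+h,b+h]\subset[a,b+h]$, the second with scale $2h$ on $[a,b]\subset[a,b+2h]$ (this is why $h\le H/2$ appears). No iteration, no drifting intervals. A side benefit is that the resulting constant is bounded uniformly in $\alpha\in(0,1]$, whereas your Gronwall output $1+1/\alpha$ matches $3$ only at $\alpha=\tfrac12$ and blows up as $\alpha\downarrow 0$.
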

\begin{remark}
 Clearly, the reverse direction is true as well. Simply because
\[
 \dint_{\nsk 0}^h\abs{f(t+s)-f(t)}\ds\leq \sup_{h\geq s>0} \abs{f(t+s)-f(t)}.
\]
Previously one used the slightly weaker but more obvious statement that the bound of \eqref{eq:die} implied that $f\in \mathcal{N}^{\beta,q}((a,b))$, for all $\beta<\alpha$~\cite[Appendix]{FreSpe12b}.
\end{remark}
\begin{proof}[Proof of Lemma~\ref{lem:diening}]
 We estimate
\begin{align*}
 \abs{f(t+h)-f(t)}&= \Bigabs{\dint_{\nsk 0}^hf(t+h)- f(t+h+s)\ds+\dint_{\nsk 0}^hf(t+h+s)-f(t)\ds}\\
&\leq \dint_{\nsk 0}^h\abs{f(t+h)-f(t+h+s)}\ds +\dint_{\nsk 0}^h\abs{f(t)-f(t+h+s)}\ds\\
&\leq \dint_{\nsk 0}^h\abs{f(t+h)-f(t+h+s)}\ds +2 \dint_{\nsk 0}^{2h}\abs{f(t)-f(t+a)}\da.
\end{align*}
This implies, for $2h\leq H$
\begin{align*}
& \norm{\abs{f(\cdot+h)-f(\cdot)}}_{L^q([a,b])}\\
&\quad\leq \Bignorm{{\dint}_{\nsk 0}^h\abs{f(\cdot+s)-f(\cdot)}\ds}_{L^q([a+h,b+h])} + 2 \Bignorm{\dint_{\nsk 0}^{2h}\abs{f(\cdot+s)-f(\cdot)}\ds}_{L^q([a,b])} \leq 3h^\alpha K.
\end{align*}

\end{proof}

\section{Estimates in time direction}
\label{sec:time}
We introduce some natural estimates, which are closely connected to the existence theory for \eqref{eq} and \eqref{stokes}. 

% This implies, by a calculation (originally done by Lindqvist in~\cite{Lin12}) that without pressure we find
% $-\divergence(A(Du))=-u_t$ almost everywhere, moreover in case of $p\geq2$ we find
% \[
%  \abs{u_t}\leq c\abs{V(Du)}^\frac{p-2}{p}\abs{\nabla V(Du)}
% \]
%  which gives by using $\frac{p}{2(p-1)}+\frac{p-2}{2p-2}=1$
% \[
%   \abs{u_t}^{p'}\leq c\abs{V(Du)}^\frac{p-2}{p-1}\abs{\nabla V(Du)}^{p'}\leq  c\abs{V(Du)}^2+\abs{\nabla V(Du)}^2.
% \]
% For $p\leq 2$ we find 
% \[
%  \abs{u_t}\leq  
% \]
 % Similar one uses the test function $-D^{-h}_{t}(D^h_{t}u(t)$ and passes to the limit with $h\to0$.

\begin{proposition}
\label{pro:Vt}
 Let Assumption~\ref{ass:phi} hold, $0<2a<T$ and $f\in L^2(Q_T)\cap L^{\phi^*}((0,T),(V^{1,\phi}(\Omega))^*)$. Then there exists a unique solution $u$ of \eqref{eq} or \eqref{stokes}, such that
\begin{align*}
 \int_a^T\int_{\Omega}\abs{u_t}^2\dt\dx+ \sup_{(a,T)}\int_{\Omega}F(Du)\dx \leq  \frac{c}{a}\int_0^T\int_{\Omega} \phi(\abs{Du})+\abs{f}^2 \dx\dt.
\end{align*}
%\label{pro:Vt}
If additionally $f\in W^{1,2}((0.T);L^{2}(\Omega))$ 
% \marginpar{Smooth by integration, such that time derivative at $u_0$ is no problem}
\[
 \int_{2a}^T\int_{\Omega}\abs{\partial_t V(Du)}^2\dx\dt+\sup_{t\in[2a,T]}\int_{\Omega}\abs{u_t}^2\dx\leq c \int_a^T\int_{\Omega}\abs{\partial_t f}^{2}\dx\dt+\frac{c}{a}\int_a^T\int_{\Omega}\abs{u_t}^2\dx.
\]
% For $\eta\in C^\infty(0,T)$,
% %and $\partial_t\eta|_{(t_0.t_1)}\leq 0$ $\partial_t\eta|_{(t_2.t_3)}\geq 0$
%  we find
% \begin{align*}
%  \int_0^T\int_{\Omega}\abs{\partial_t V(Du)\eta}^2\dx\dt&+\int_{0}^{T}\frac{d}{dt}\int_{\Omega}\abs{u_t\eta}^2\dx\dt\leq c \int_{\supp(\eta)}\int_{\Omega}\abs{\partial_t f}^{2}+\abs{u_t\eta^2}^2\dx\dt
% \\
% &+\int_{0}^{T}\partial_t\eta^2\int_{\Omega}\abs{u_t}^2\dx.
%\end{align*}
\end{proposition}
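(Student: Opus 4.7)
The plan is Galerkin approximation in $x$, with the estimates derived at the approximate level (where time smoothness is automatic from the ODE) and then transferred to the limit. Fix an $L^2$-orthonormal basis $\set{w_i}$ of $V^{1,\phi}_0(\Omega)$ (or of $V^{1,\phi}_{0,\Div}(\Omega)$ for \eqref{stokes}), let $u^k(t,x)=\sum_{i\le k}c^k_i(t)w_i(x)$ with $u^k(0)$ the $L^2$-projection of $u_0$, and observe that standard testing with $u^k$ produces uniform bounds in $L^\infty_t L^2_x\cap L^\phi_t V^{1,\phi}_{0,x}$. Existence, uniqueness and identification of the limit equation follow from the monotonicity condition (c) of Assumption~\ref{ass:phi} via Minty's trick.

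For the first estimate, test with $u^k_t$ itself, which is admissible since it lies in the Galerkin subspace. Using $A=D_QF$, the elliptic term becomes an exact time derivative, yielding
\begin{equation*}
\int_{s_1}^{s_2}\!\!\int_\Omega \abs{u^k_t}^2\dx\dt + \int_\Omega F(x,Du^k(s_2))\dx = \int_\Omega F(x,Du^k(s_1))\dx + \int_{s_1}^{s_2}\!\!\int_\Omega f\,u^k_t\dx\dt.
\end{equation*}
Pick $s_1\in(0,a)$ by the mean value theorem so that $\int_\Omega F(x,Du^k(s_1))\dx\le \tfrac{1}{a}\int_0^a\!\!\int_\Omega F(x,Du^k)\dx\dt\lesssim \tfrac{1}{a}\int_0^T\!\!\int_\Omega\phi(\abs{Du^k})\dx\dt$, using the upper bound $F(x,Q)\lesssim \phi(\abs{Q})$ from Assumption~\ref{ass:phi}(b). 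Absorb $f u^k_t$ by Young's inequality, take the supremum over $s_2\in[a,T]$, and pass to the weak limit via Fatou and lower semicontinuity to obtain the first bound.

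For the second estimate, differentiate the Galerkin system in $t$ (legitimate because $c^k_i\in C^1$) and test with $u^k_t\eta^2$, where $\eta\in C^\infty_c((0,T))$ equals $1$ on $[2a,T]$, vanishes on $[0,a]$ and satisfies $\abs{\eta'}\lesssim 1/a$. By Assumption~\ref{ass:phi}(e) combined with \eqref{eq:hammer2}, the elliptic term $\int_\Omega D_QA(x,Du^k)Du^k_t\otimes Du^k_t\dx\,\eta^2$ is comparable to $\int_\Omega\abs{\partial_tV(x,Du^k)}^2\dx\,\eta^2$, while integration by parts in $t$ on $(u^k_{tt},u^k_t)\eta^2$ produces $\tfrac12\int_\Omega\abs{u^k_t(t_0)}^2\dx\,\eta^2(t_0)-\int\!\int\eta\eta'\abs{u^k_t}^2\dx\dt$ for arbitrary $t_0\in[2a,T]$. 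A Young's inequality on $\int\!\int f_t\,u^k_t\,\eta^2$ closes the estimate, with the leftover $\int\!\int\abs{u^k_t}^2\eta^2$ and the $\eta\eta'$ term together producing the factor $\tfrac{c}{a}\int_a^T\!\int_\Omega\abs{u_t}^2\dx$.

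The main obstacle is preserving the pointwise identity $D_QA(Du)Du_t\otimes Du_t\sim\abs{\partial_tV(Du)}^2$ under passage to the limit: one cannot differentiate the limit equation directly because $u_{tt}$ is not a priori available. This is circumvented by carrying out the estimate uniformly at the Galerkin level, where the identity is classical, and using weak lower semicontinuity of $\int\abs{\partial_tV(Du^k)}^2$ against the bound established for $k$. The pointwise equivalence of Remark~\ref{rem:1} (crucially supported by the hypothesis $\phi''(t)t^2\sim\phi(t)$) ensures the limit quantity is correctly identified. The divergence-free case of \eqref{stokes} is identical once divergence-free basis functions are chosen, and the pressure is recovered a posteriori via de Rham.
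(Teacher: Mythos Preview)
Your approach is essentially the paper's: Galerkin approximation, test with $u^k_t$ for the first estimate, differentiate the approximate system in $t$ and test again with $u^k_t$ for the second. The only cosmetic difference is that you localise in time with a cutoff $\eta$, whereas the paper writes the estimate on $[a,\tau]$ and then averages over the left endpoint; both produce the $c/a$ factor.

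There is, however, one step you gloss over that the paper handles explicitly. To pass the second estimate to the limit you must identify the weak $L^2$-limit of $\partial_t V(Du^k)$ as $\partial_t V(Du)$. Weak lower semicontinuity alone bounds the norm of \emph{whatever} the weak limit happens to be; it does not tell you that limit is $\partial_t V(Du)$. For that you need $V(Du^k)\to V(Du)$ strongly (or at least a.e.), so that the distributional time derivative is correctly identified. Minty's trick as you invoke it only yields that the limit $u$ solves the equation, not strong convergence of gradients, and Remark~\ref{rem:1} is a pointwise equivalence that says nothing about convergence of the sequence. The paper closes this gap by recasting the Galerkin system as a minimum problem and using strict convexity of $F$ to upgrade $\limsup_m\int F(Du_m)\le\int F(Du)$ to strong convergence $Du_m\to Du$; equivalently, once the limit equation is known, you can test the approximate equation with $u^k$ and the limit equation with $u$, subtract, and use the monotonicity (c) to get $\int\abs{V(Du^k)-V(Du)}^2\to 0$. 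Either route works, but the step must be present.
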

The above estimates are standard. Formally they can be derived by using $u_t$ and $\partial_t^2 u$ as testfunctions. These are solenoidal, such that the pressure does not interfere. In case of p-structure the second estimate can be found in \cite[Theorem~8.1]{Lio69}. For the Orlicz setting we refer to~\cite{BulEttKap10}. As our assumptions are not immediately covered by these references. For this reason and to keep the paper self-contained we include a proof of Proposition~\ref{pro:Vt} in the appendix below. 

To simplify notation we will assume, that $u$ holds \eqref{eq} or \eqref{stokes} on a larger interval say $(-A,T+A)$ with initial data at the point $-A$. We can then assume in the following (by using \eqref{basic:2}), that for all $h\in (0,A]$
\begin{align}
 \label{eq:timeall}
\int_0^T\int_\Omega&\abs{D^h_t V(Du)}^2+\abs{u_t}^2\dx\dt +\sup_{t\in[0,T]}\int_{{\Omega}}\abs{u_t}^2+ F(Du)\dx\leq \frac{K}{A},
\end{align}
Here $K$ depends on $f$ by the right hand side of Proposition~\ref{pro:Vt}. If $\Omega$ is bounded one can estimate $\int_0^T\int_\Omega \phi(\abs{Du})\dx\dt\leq \int_0^T\int_\Omega \phi^*(\abs{f})$, by testing with $u$, \Poincare{} and Young' s inequality for Orlicz functions \eqref{eq:young}. Then 
\[
K\sim \int_0^T\int_{\Omega} \phi^*(\abs{f})+\abs{f}^2+\abs{f_t}^2 \dx\dt.
\]
\begin{remark}
\label{rem:pleq2}
 Please observe, that in case of p-growth and $1<p\leq 2$, this implies that $Du_t\in L^p((a,T)\times \Omega)$. As Young's inequality and \eqref{eq:hammer2} imply
\[ \abs{\partial_tDu}^p=(\abs{Du}^{p-2}\abs{\partial_tDu}^2)^\frac{p}{2}\abs{Du}^\frac{(2-p)p}{2}\leq \abs{Du}^{p-2}\abs{\partial_tDu}^2+\abs{Du}^p\sim \abs{\partial_t V(Du)}+\abs{Du}^p.
\]
\end{remark}
We will need to use $u_t$ as a testfunction for our general assumptions. The next lemma justifies it.
\begin{lemma}
\label{lem:adm}
Let $u$ be a solution to \eqref{eq} or \eqref{stokes}, such that \eqref{eq:timeall} holds. Then $u_t$ can be used as a testfunction to \eqref{eq} or \eqref{stokes}, in the sense that
\begin{align*}
  \int_a^{T-a}\int_\Omega A(\cdot,Du)\cdot Du_t + u_t\cdot u_t\dx\dt = \int_a^{T-a}\int_\Omega f\cdot u_t.
  \end{align*}
\end{lemma}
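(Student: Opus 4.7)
The plan is to approximate $u_t$ by backward time-difference quotients of $u$ and pass to the limit, using the uniform control on $\partial_t V(Du)$ provided by Proposition~\ref{pro:Vt} together with the pointwise identity in Lemma~\ref{lem:t}. Fix a smooth cutoff $\eta\in C^\infty_c((0,T))$ with $\eta\equiv 1$ on $[a,T-a]$ and $\supp\eta\subset(a/2,T-a/2)$, and for $0<h<a/4$ consider
\[
\xi_h(t,x):=\eta(t)\,D^{-h}_t u(t,x).
\]
Then $\xi_h$ inherits from $u$ both the vanishing boundary values and, in the Stokes case, the $x$-divergence-free constraint, and it lies in $L^p((0,T);V^{1,\phi}_{0,\Div}(\Omega))$. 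Since $u_t\in L^2(Q_T)$ by \eqref{eq:timeall}, the weak formulation of \eqref{eq} or \eqref{stokes} may be tested against $\xi_h$ in the equivalent strong-in-time form
\[
\int_0^T\!\!\int_\Omega u_t\cdot\xi_h\,dx\,dt+\int_0^T\!\!\int_\Omega A(x,Du)\cdot D\xi_h\,dx\,dt=\int_0^T\!\!\int_\Omega f\cdot\xi_h\,dx\,dt.
\]

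Next I would let $h\to 0$ term by term. Since $u_t,f\in L^2(Q_T)$ and \eqref{basic:2} yields $D^{-h}_t u\to u_t$ in $L^2(Q_T)$, the first left-hand integral converges to $\int\eta\,|u_t|^2\,dx\,dt$ and the right-hand side to $\int\eta\,f\cdot u_t\,dx\,dt$. For the elliptic term note $D\xi_h=\eta\,D^{-h}_t Du$, and Lemma~\ref{lem:t}(2) produces the pointwise split
\[
\eta\,A(x,Du)\cdot D^{-h}_t Du=\eta\,D^{-h}_t F(x,Du)+R_h,\qquad |R_h|\le c\,\eta\,h\,|D^{-h}_t V(Du)|^2.
\]
Because \eqref{eq:timeall} bounds $\|D^{-h}_t V(Du)\|_{L^2(Q_T)}$ uniformly in $h$, one has $\int_{Q_T}|R_h|=O(h)\to 0$. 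A partial summation in $t$ together with $F(Du)\in L^1(Q_T)$ then gives
\[
\int_0^T\!\!\int_\Omega\eta\,D^{-h}_t F(Du)\,dx\,dt=-\int_0^T\!\!\int_\Omega(D^h_t\eta)\,F(Du)\,dx\,dt\;\longrightarrow\;-\int_0^T\!\!\int_\Omega\eta'\,F(Du)\,dx\,dt
\]
by dominated convergence, since $D^h_t\eta\to\eta'$ uniformly with $|D^h_t\eta|\le\|\eta'\|_\infty$.

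Collecting the three limits leaves the identity
\[
\int_0^T\!\!\int_\Omega\eta\,|u_t|^2\,dx\,dt-\int_0^T\!\!\int_\Omega\eta'\,F(Du)\,dx\,dt=\int_0^T\!\!\int_\Omega\eta\,f\cdot u_t\,dx\,dt,
\]
valid for every smooth cutoff $\eta$. The remaining step is to identify $-\int\eta'\,F(Du)=\int\eta\,A(x,Du)\cdot Du_t$ and then approximate $\chi_{[a,T-a]}$ by such $\eta$'s. The chain rule $\partial_t F(Du)=A(x,Du)\cdot Du_t$ is legitimate because Proposition~\ref{pro:Vt} gives $V(Du)\in H^1((0,T);L^2(\Omega))$, $F(Du)\sim|V(Du)|^2$ so $F(Du)\in W^{1,1}_t L^1_x$, and the time derivative $A(Du)\cdot Du_t$ is in $L^1(Q_T)$ via Cauchy--Schwarz and \eqref{eq:hammer2}: writing $|A\cdot Du_t|\lesssim\big(\sqrt{\phi''(|Du|)}|Du_t|\big)\big(\sqrt{\phi''(|Du|)}|Du|\big)$, the first factor lies in $L^2$ because $\phi''(|Du|)|Du_t|^2\sim|\partial_t V(Du)|^2$ and the second because $\phi''(|Du|)|Du|^2\sim\phi(|Du|)\in L^1$. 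The main obstacle in the whole scheme is exactly the middle (elliptic) term: one must extract a perfect-derivative piece $D^{-h}_t F(Du)$ while keeping the error $R_h$ of order $h$, which is precisely the content of Lemma~\ref{lem:t}(2) combined with the global $L^2$-bound on $\partial_t V(Du)$ from Proposition~\ref{pro:Vt}.
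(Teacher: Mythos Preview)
Your argument is correct and the overall strategy---approximate $u_t$ by difference quotients of $u$, test, and pass to the limit---is the same as the paper's. The handling of the elliptic term, however, is genuinely different. The paper shows first that $A(\cdot,Du)\cdot Du_t\in L^1$ (via the same Cauchy--Schwarz/\eqref{eq:hammer2} estimate you record at the end), then proves $A(Du)\cdot D^h_t(Du)\to A(Du)\cdot Du_t$ by dominated convergence, building an explicit majorant through a case distinction on whether $|Du(t+h)|\le |Du(t)|$ or not. You instead invoke Lemma~\ref{lem:t}(b) to split $A(Du)\cdot D^{-h}_t(Du)=D^{-h}_tF(Du)+R_h$ with $\int|R_h|=O(h)$, discharge the perfect-derivative piece by partial summation against the cutoff, and only afterwards appeal to the chain rule $\partial_tF(Du)=A(Du)\cdot Du_t$ to recover the stated identity. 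Your route avoids the paper's ad hoc majorant construction and nicely reuses the central Lemma~\ref{lem:t}, but it is slightly longer because the chain-rule step at the end still requires the $L^1$ bound on $A(Du)\cdot Du_t$ that the paper simply puts first. One small imprecision: the implication ``$F(Du)\sim|V(Du)|^2$ hence $F(Du)\in W^{1,1}_tL^1_x$'' is not quite an argument, since comparability does not transfer weak differentiability; what actually justifies the chain rule is that $V(Du)\in H^1_tL^2_x$ forces $Du$ to be a.e.\ differentiable in $t$ (via the local invertibility of $V$), $F(x,\cdot)\in C^1$, and your Cauchy--Schwarz bound puts the resulting derivative $A(Du)\cdot Du_t$ in $L^1$. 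With that adjustment the argument goes through.
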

\begin{proof}
We start with the following observation which follows from \eqref{eq:hammer2} and Assumption~\ref{ass:phi} 
\begin{align*}
\int_a^T\int_\Omega \abs{A(\cdot,Du)}\abs{Du_t}\leq &\leq c\int_a^T\int_\Omega \phi''(\mu+\abs{Du})\abs{Du}\abs{Du_t}\dz \\
&\leq c\int_a^T\int_\Omega \phi''(\mu+\abs{Du})(\abs{Du}^2+\abs{Du_t}^2)\dz\\
&\leq c\int_a^T\int_\Omega \phi(\abs{Du})+\abs{\partial_t V(Du)}^2\dz.
\end{align*}
This implies that $A(\cdot,Du)\cdot Du_t\in L^1((a,T)\times\Omega)$. Next we will approximate $u_t$. 
We use the approximation $D^h_tu$, for $0<h<a$ which is a testfunction to the system on the interval $[a,T-a]$. We then have pointwisely, that
\[
A(x,Du(t,x))\cdot D^h_t(Du)(t,x)\to A(x,Du(t,x))\cdot Du_t(t,x)\,\text{for } h\to 0
\]
and almost every $(t,x)$. To be able to use Lebesgues convergence theorem we have to provide a majorant. Let $t\in[a,T-a]$. Please observe, that by \eqref{eq:hammer} we have that
\[
\abs{D^h_tV(Du)(t)}^2\sim{\phi''(\mu+\max\set{\abs{Du(t)},\abs{Du(t+h)}})}\abs{D^h_t(Du)(t)}^2.
\]

If $\abs{Du(t+h)}\leq \abs{Du(t)}$, we estimate using the previous and Young's inequality
\[
\abs{A(Du(t))\cdot D^h_t(Du)}\leq {\phi''(\mu+\abs{Du(t)})}\abs{Du}\abs{D^h_t(Du)(t)}\leq c\phi(\abs{Du})+c\abs{D^h_tV(Du)(t)}^2.
\]
In case $\abs{Du(t+h)}\geq \abs{Du(t)}$, we estimate using Assumption~\ref{ass:phi} and the last estimate to get 
\begin{align*}
\abs{A(Du(t))\cdot D^h_t(Du)}&\leq \abs{(A(Du(t))-A(Du(t+h)))\cdot D^h_t(Du)}+\abs{A(Du(t+h))\cdot D^h_t(Du)}\\
&\leq c\abs{D^h_tV(Du)(t)}^2+c\abs{D^h_tV(Du)(t+h)}^2+c\phi(\abs{Du(t+h)}).
\end{align*}
All together we have the majorant $c(\abs{D^h_tV(Du)(t)}^2+\abs{D^h_tV(Du)(t+h)}^2+\phi(\abs{Du(t+h)})+\phi(\abs{Du(t)}))$ which is converging in $L^1((a,T-a)\times\Omega)$.
%For this we use Lemma~\ref{lem:t}, which implies, that $D^h_t F(Du)-\frac{c_2}{h}\abs{\Delta^h_tV(Du)}\leq A(\cdot,Du)\cdot D^h_tDu\leq D^h_t F(Du)$. Now the convergence in $L^1$ is satisfied, as \eqref{eq:timeall} implies that $D^h_t F(Du)+\frac{c_2}{h}$ and $\abs{\Delta^h_tV(Du)}$ converge in $L^1((a,T-a)\times\Omega)$, for $h\to 0$. 
As we assume $f\in L^2(Q_T)$ and as we know by \eqref{eq:timeall} that $u_t\in L^2(Q_T)$, we find that
  \begin{align*}
  \int_a^{T-a}\int_\Omega A(\cdot,Du)\cdot DD^h_tu + u_t\cdot D^h_tu\dx\dt = \int_a^{T-a}\int_\Omega f\cdot D^h_tu,
  \end{align*} 
  converges to the right limit equation with $h\to0$. 
\end{proof}
 
The following Proposition is the main effort to prove Theorem~\ref{thm:main}.
\begin{proposition}
\label{pro:N12t} Let the assumptions of Proposition~\ref{pro:Vt} be satisfied, such that \eqref{eq:timeall} holds.
Then $u_t\in \mathcal{N}^{\frac12,2}((0,T),L^2(\Omega))$. Moreover, for every $A\geq h>0$ we have 
\begin{align*}
 & \dint_{\nsk 0}^h\int_{A+h}^{T-A-h}\int_{\Omega}\frac{\abs{\Delta^s_t(u_t)(t)}^2}{h}\dx\dt\ds\, da
\leq \frac{K}{A},
\end{align*}
where $K$ depends (linearly) on the right hand side of \eqref{eq:timeall} and on the characteristics of $F$.
\end{proposition}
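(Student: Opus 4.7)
The plan is to use a ``backward-forward'' test function of the type highlighted in \eqref{eq:quot}. Concretely, I would test the weak form of \eqref{eq} (or \eqref{stokes}) on a time interval like $[A+h, T-A-h]$ with
\[
\psi(t,x) := u_t(t,x) - \dashint_0^h u_t(t+s,x)\, ds \;=\; \dashint_0^h \bigl(u_t(t)-u_t(t+s)\bigr)\, ds,
\]
which by Lemma~\ref{lem:adm} is admissible (up to a routine temporal cutoff). Using the elementary identity $2ab = a^2+b^2-(a-b)^2$ together with a shift $t\mapsto t-s$ under the $dt$ integral, the parabolic term equals
\[
\int\int u_t\cdot \psi\, dx\, dt \;=\; \tfrac12\dashint_0^h \int\int \abs{\Delta^s_t u_t}^2\, dx\, dt\, ds \;+\; R_0,
\]
where the remainder $R_0$ consists of boundary-in-time contributions controlled by $\sup_t \norm{u_t(t)}_{L^2(\Omega)}^2 \leq K/A$ from \eqref{eq:timeall}. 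This produces precisely the quantity to be estimated.

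For the elliptic contribution $\int\int A(x,Du)\cdot D\psi$, I would rewrite $D\psi=-\dashint_0^h \partial_t \Delta^s_t Du\, ds$ and integrate by parts in time for each fixed $s$, obtaining (modulo boundary pieces) $-\dashint_0^h \int\int \partial_t A(x,Du)\cdot \Delta^s_t Du\, dx\, dt\, ds$. By Assumption~\ref{ass:phi}(e) and the chain rule $\partial_t A(x,Du)=D_Q A(x,Du)\cdot \partial_t Du$, Cauchy--Schwarz together with \eqref{eq:hammer} and \eqref{eq:hammer2} bound this by
\[
\norm{\partial_t V(Du)}_{L^2(Q_T)}\cdot \norm{\Delta^s_t V(Du)}_{L^2(Q_T)}.
\]
Since $\Delta^s_t V(Du)(t)=\int_0^s \partial_\tau V(Du)(t+\tau)\, d\tau$, Cauchy--Schwarz in $\tau$ yields $\norm{\Delta^s_t V(Du)}_{L^2(Q_T)}\leq s\, \norm{\partial_t V(Du)}_{L^2(Q_T)}$, so the elliptic contribution is pointwise-in-$s$ of order $s\cdot K/A$ and averaging over $s\in[0,h]$ gives $\lesssim h\cdot K/A$. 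Boundary pieces are handled using $A(Du)\cdot \Delta^s Du \sim \abs{\Delta^s V(Du)}^2$ (Lemma~\ref{lem:t}) and the $L^\infty$-in-time bounds of \eqref{eq:timeall}. The right-hand side $\int\int f\cdot \psi$ is rewritten via a time shift as $\dashint_0^h \int\int (f(t)-f(t-s))u_t(t)\, dx\, dt\, ds$; the hypothesis $f_t\in L^2(Q_T)$ yields $\norm{f(\cdot)-f(\cdot-s)}_{L^2(Q_T)}\leq s\norm{f_t}_{L^2(Q_T)}$, giving $\lesssim h\cdot K/A$ after averaging.

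Combining these three estimates yields precisely the inequality stated in Proposition~\ref{pro:N12t}. The Nikolskij membership $u_t\in \mathcal{N}^{1/2,2}((0,T),L^2(\Omega))$ and the bound of Theorem~\ref{thm:main} then follow from Jensen's inequality and Lemma~\ref{lem:diening}: indeed
\[
\norm{\dashint_0^h \abs{u_t(t+s)-u_t(t)}\, ds}_{L^2(Q_T)}^2 \leq \dashint_0^h \norm{\Delta^s_t u_t}_{L^2(Q_T)}^2\, ds \leq h\cdot K/A,
\]
which is the hypothesis of Lemma~\ref{lem:diening} with $\alpha=\tfrac12$ and $q=2$. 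The main technical difficulty will be the careful scaling analysis of the elliptic term: one must extract exactly one factor of $s$ and only invoke the $V$-quantity $\partial_t V(Du)$, whose $L^2$-bound is provided by Proposition~\ref{pro:Vt}; the non-linear $V$-function calculus \eqref{eq:hammer}--\eqref{eq:hammer2} is tailored to achieve exactly this.
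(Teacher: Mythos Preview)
Your overall plan—test with a backward--forward average of $u_t$ and read off $\dashint_0^h\int|\Delta^s_t u_t|^2$ from the parabolic part—is exactly the mechanism behind the proof, and your treatment of the parabolic term and of the right-hand side is fine. However, your handling of the elliptic contribution has two genuine gaps.

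\textbf{Gap 1 (the bulk term).} After integrating by parts in time you invoke Cauchy--Schwarz to claim
\[
\bigl|\partial_t A(x,Du)\cdot\Delta^s_t Du\bigr|\;\lesssim\;\bigl|\partial_t V(Du)\bigr|\,\bigl|\Delta^s_t V(Du)\bigr|.
\]
Using (e) and \eqref{eq:hammer2} one gets the left side bounded by $\sqrt{\phi''(\mu+|Du(t)|)}\,|\partial_t V(Du)|\,|\Delta^s_t Du|$, whereas by \eqref{eq:hammer} one has $|\Delta^s_t V(Du)|\sim\sqrt{\phi''(\mu+|Du(t)|+|\Delta^s_t Du|)}\,|\Delta^s_t Du|$. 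The missing inequality is therefore $\phi''(\mu+|Du(t)|)\lesssim\phi''(\mu+|Du(t)|+|\Delta^s_t Du|)$, which is \emph{false} whenever $\phi''$ is decreasing, i.e.\ in the singular range $p<2$. Your bound on the bulk term is thus not valid under Assumption~\ref{ass:p}/\ref{ass:phi} in full generality.

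\textbf{Gap 2 (the boundary pieces).} You write that the boundary contributions from the time integration by parts are controlled via ``$A(Du)\cdot\Delta^s Du\sim|\Delta^s V(Du)|^2$''. This equivalence is incorrect: the left side is linear in $\Delta^s Du$, the right side quadratic. What Lemma~\ref{lem:t} actually yields is
\[
A(Du)\cdot\Delta^s_t Du\;=\;\Delta^s_t F(Du)\;+\;O\bigl(|\Delta^s_t V(Du)|^2\bigr),
\]
and while the $|\Delta^s_t V(Du)|^2$ part is indeed $O(h\,K/A)$ after $\dashint_0^h$, the term $\int_\Omega\Delta^s_t F(Du)$ evaluated at a fixed time endpoint is only $O(K/A)$ via $\sup_t\int_\Omega F(Du)$, with \emph{no} factor of $h$. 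After dividing by $h$ this blows up, so the estimate as you outline it does not close.

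\textbf{How the paper circumvents both issues.} The proof never integrates by parts in time in the elliptic term. Instead it uses the variational identity $A(Du)\cdot Du_t=\partial_t F(Du)$ together with Lemma~\ref{lem:t}(c) to convert $A(Du)\cdot(Du(t+h)-Du(t-h))$ into $F(Du(t+h))-F(Du(t-h))$ plus controlled $|\Delta^{\pm h}_t V(Du)|^2$ terms; this sidesteps the weight mismatch of Gap~1 for all $p$. The resulting $F$-differences telescope but leave residual terms carrying a factor $1/h$—precisely the analogue of your boundary problem. These are killed by an \emph{additional} averaging over the starting point $a\in[0,A]$ of the test interval (equations \eqref{eq:A1}--\eqref{eq:A2}); the paper flags this step as ``an important moment in the proof''. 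Your proposal omits this averaging, and without it the $O(1/h)$ terms cannot be removed.
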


\begin{proof}
Let $A\geq h>0$ and $a\in[0,h]$.
 We use the testfunction $\frac1h\dint_{ 0}^h\Delta^{-s}_t\Delta^s_t(u_t)(t)\ds$ on $[a+h,T-a-h]$. Let us briefly show we are allowed to use this test function for every positive $h$. We calculate
\[
 \dint_{ 0}^h\Delta^{-s}_t\Delta^s_t(u_t)(t)\ds= 2u_t-u(t+h)-u(t-h),
\]
the functions $u(t+h),u(t-h)\in V^{1,\phi}(Q_T)$ and therefore admissible. The function $u_t$ is admissible by Lemma~\ref{lem:adm}. 
% Due to Remark~\ref{rem:pleq2} this is an admissible testfunction if Assumption~\ref{ass:p} holds, with $1<p\leq2$. In general it will only be used formally. In the next subsection we introduce an approximation scheme, which legitimates this estimate for a suitable approximation.
%  

Therefore we may apply the testfunction to our system. Partial summation implies 
\begin{align*}
 (I)+(II)+(III)&:=\frac1h\dint_{\nsk 0}^h \int_{\Omega}\int_{a+h}^{T-a-h}\unite\abs{\Delta^s_t(u_t)(t)}^2\dx\dt\ds\\
&+\frac1h\dint_{\nsk 0}^h\int_{\Omega}\int_{a+h-s}^{a+h}\unite u_t\Delta^s_t(u_t)\dt\dx-\int_{\Omega}\int_{T-a-h-s}^{T-a-h}\unite u_t \Delta^s_t(u_t)\dt\dx\ds
\\
&+\frac1{h} \int_{\Omega}\int_{a+h}^{T-a-h}\unite A(Du)(t)D\dint_{\nsk 0}^h 2u_t(t)-u_t(t+s)-u_t(t-s)\ds\dx\dt\\
&=\frac1h\dint_{\nsk 0}^h\int_{\Omega}\int_{a+h}^{T-a-h}\unite \Delta^s_t(f)\Delta^s_t(u_t)\dt\\
&+\int_{a+h-s}^{a+h}\unite f\Delta^s_t(u_t)\dt-\int_{T-a-h-s}^{T-a-h}\unite f\Delta^s_t(u_t)\dt\dx\ds=:(IV)
\end{align*}
%Which shows, that it is a convenient test function.
We start by estimating $(II)$ with Young's inequality
% \begin{align*}
% -u_t(t)(u_t(t)-u_t(t+s))=-\frac12\abs{u_t(t)-u_t(t+s)}^2+\frac12(u_t^2(t+s)- u_t^2(t)).
% \end{align*}
% This implies
\begin{align*}
 \abs{(II)}&\leq \dint_{\nsk 0}^h\dint_a^{a+h}\int_{\Omega}\abs{u_t(t)}^2+\abs{u_t(t+s)}^2\dt+\dint_{T-2h-a}^{T-a-h}\unite \abs{u_t(t)}^2+\abs{u_t(t+s)}^2\dt\dx\ds \\
&\leq c\sup_{(a,2a+h)\cup(T-2h-a,T-a)}\int_{\Omega}\abs{u_t}^2\dx.
\end{align*}
Next we estimate $(IV)$ by Young's inequality and similar as before
\begin{align*}
 \abs{(IV)}\leq &\delta (I)+ c_\delta\frac1h\dint_{\nsk 0}^h \int_{\Omega}\int_{a+h}^{T-a-h}\unite\abs{\Delta^s_t(f)}^2\dx\dt\ds+c\sup_{(a,2a+h)\cup(T-2h-a,T-a)}\int_{\Omega}\abs{u_t}^2+\abs{f}^2\dx.
\end{align*}
We divide $(III)$ into
\begin{align*}
 (III)&= \frac2{h}\int_{\Omega}\int_{a+h}^{T-a-h}\unite  A(Du)(t)Du_t(t)\dx\dt\\
&\quad-\frac1{h^2}\int_{\Omega}\int_{a+h}^{T-a-h}\unite  A(Du(t))(Du(t+h)-Du(t-h))\ds\dx\dt=(III)_1+(III)_2.
\end{align*}
\begin{align*}
 (III)_1=\frac2{h^2}\int_{\Omega}\int_{a+h}^{T-a-h}\unite  \partial_t F(Du(t))\dx\dt=\frac2{h^2}\int_\Omega F(Du(T-a-h)-F(Du(a+h))\dx
\end{align*}

To estimate $(III)_2$ we need to use Lemma~\ref{lem:t} 
\begin{align*}
 (III)_2&=  \frac{1}{h^2} \int_{\Omega}\int_{a+h}^{T-a-h}\unite\big(F(Du(t-h))-F(Du(t+h))\big)\\
&\quad -\frac{c_2}{2h^2} \abs{\Delta^{-h}_{t}V(Du)(t)}+\frac{c}{2h^2}\abs{\Delta^{h}_{t}V(Du)(t)}^2\dx\dt\\
&\geq-c\int_{\Omega}\int_{a+h}^{T-a-h} \Bigabs{\frac{\Delta^{h}_{t}V(Du)(t)}{h}}^2\dx\dt\\
&\quad+ \frac1{h^2}\Big(\int_a^{a+2h}\int_{\Omega}F(Du(s))\dx\ds -\int_{T-a-2h}^{T-a}\int_{\Omega}F(Du(s))\dx\ds\Big)\\
&= -c\int_{\Omega}\int_{a+h}^{T-a-h} \Bigabs{\frac{\Delta^{h}_{t}V(Du)(t)}{h}}^2\dx\dt\\
&\quad+  \frac2h \Big(\dint_a^{a+2h}\int_{\Omega}F(Du(s))\dx\ds -\dint_{T-a-2h}^{T-a}\int_{\Omega}F(Du(s))\dx\ds\Big)
\end{align*}
where we used \eqref{basic:1}.
This implies
\begin{align}
\label{eq:zwischen}
\begin{aligned}
 (I)\leq &c\abs{(II)}+c(\abs{(IV)}-\delta(I))+c\int_{\Omega}\int_{a+h}^{T-a-h} \Bigabs{\frac{\Delta^{h}_{t}V(Du)(t)}{h}}^2\dx\dt\\
&+\frac{2}h\dint_a^{a+2h}\int_{\Omega}F(Du(s))\dx\ds-F(Du(a+h))\dx\ds\\
& +\frac{2}h\dint_{T-a-2h}^{T-a}\int_{\Omega}F(Du(T-a-h))-F(Du(s))\dx\ds.
\end{aligned}
\end{align}
The terms in consideration have the factor $\frac{1}{h}$, which looks to much. However, by an  integration with respect to $a$, there is an additional cancellation effect which gives the ``correct'' order in $h$. This procedure is an important moment in the proof.

We integrate $a$ over the interval $[0,A]$ to find
\begin{align}
\label{eq:A1}
\begin{aligned}
\frac1h\int_0^A\dint_a^{a+2h}&\int_{\Omega}F(Du(s))\dx\ds-F(Du(a+h))\dx\ds\, da\\
&=\frac1h\int_{\Omega}\int_0^{A}\dint_{\nsk 0}^{2h}F(Du(a+s))\,da\ds-\int_h^{A+h}F(Du(a))\,da\dx\\
&=\frac1h\int_{\Omega}\dint_{\nsk 0}^{2h}\int_s^{A+s}F(Du(a))\,da\ds-\int_h^{A+h}F(Du(a))\,da\dx\\
&=\frac1{h}\int_{\Omega}\dint_{\nsk 0}^{2h}\int_{s}^{h}F(Du(a))\,da-\int_{A+s}^{A+h}F(Du(a))\,da\ds\dx\\
&=\int_{\Omega}\dint_{\nsk 0}^{2h}\frac{\abs{h-s}}{h}\Big(\dint_{s}^{h}F(Du(a))\,da-\dint_{A+s}^{A+h}F(Du(a))\,da\Big)\ds\dx\\
&\leq \sup_{(0,h)\cup(A+h,A+2h)}\int_{\Omega}F(Du)\dx.
\end{aligned}
\end{align}
The second difference of $F$ is estimated analogous, such that
\begin{align}
\label{eq:A2}
\begin{aligned}
 \int_0^A\frac1h&\dint_{T-a-2h}^{T-a}\int_{\Omega}F(Du(T-a-h))-F(Du(s))\dx\ds\, da\\
&=\frac1h\int_{\Omega}-\int_{T-A-h}^{T-h}F(Du(a))\,da+\dint_{\nsk 0}^{2h}\int_{T-A-2h+s}^{T-2h+s}F(Du(a))\,da\ds\dx\\
&= \frac1h\int_{\Omega}\dint_{\nsk 0}^{2h}-\int_{T-A-h}^{T-A-2h+s}F(Du(a))\,da
+ \int_{\Omega}\int_{T-h}^{T-2h+s}F(Du(a))\,da\ds\dx\\
&\leq  \sup_{(T-A-2h,T-A-h)\cup(T-h,T)}\int_{\Omega}F(Du)\dx.
\end{aligned}
\end{align}
Combining \eqref{eq:zwischen} with \eqref{eq:A1} and \eqref{eq:A2} gives
\begin{align*}
 & \dint_{\nsk 0}^h\int_{A+h}^{T-A-h}\int_{\Omega}\frac{\abs{\Delta^s_t(u_t)(t)}^2}{h}\dx\dt\ds
\leq  \dint_{\nsk 0}^A\int_0^h \int_{\Omega}\int_{a+h}^{T-a-h}\frac{\abs{\Delta^s_t(u_t)(t)}^2}{h}\dx\dt\ds\, da\\
&\leq \frac{c}{A}\bigg( \sup_{(0,A+2h)\cup (T-A-2h,T)}\int_{\Omega}F(Du)\dx
+\sup_{(0,2h+A)\cup(T-2h-A,T)}\int_{\Omega}\abs{u_t}^2\dx\\
&+\frac1h\dint_{\nsk 0}^h\int_{h}^{T-h}\int_{\Omega}\abs{\Delta^s_t(f)}^2\dx\dt\ds
+\sup_{(0,2h+A)\cup(T-2h-A,T)}\int_{\Omega}\abs{u_t}^2+\abs{f}^2\dx\\
&+c\int_{\Omega}\int_{h}^{T-h} \Bigabs{\frac{\Delta^{h}_{t}V(Du)(t)}{h}}^2\dx\dt\bigg)\leq c\frac{K}{A}
\end{align*}
by Sobolev embedding and \eqref{eq:timeall}.
\end{proof}
\begin{proof}[Proof of Theorem~\ref{thm:main}]
% We use the approximation of the last section. For that we have Proposition~\ref{pro:Vt} (uniformly). Accordingly, w
We imply Proposition~\ref{pro:N12t} on the interval on $[\frac{a}{2},T]$ for $A=\frac{a}2$. Lemma~\ref{lem:diening} then concludes the proof.
\end{proof}

\section{Estimates in space direction}
In this section we derive estimates for mixed derivatives (in time and space) for the whole space situation. Therefore $\Omega\equiv\setR^n$ and $Q_T=(0,T)\times\setR^n$.
%  In the last section we have introduced an approximation scheme, that justifies the use of $u_t$ as a testfunction. In the following even more complicated estimates then in Section~\ref{sec:time} we will assume, that $u_t$ can be used as a testfunction. We did not include the additional terms, that would show if we would do the estimates for $u^\epsilon$, the solution of \eqref{eq:app} or \eqref{stokes}. To us this seemed reasonable, as these terms are estimated by a simple analogy of the much more sophisticated non linear terms. They would worsen the readability without providing any insight. 

The aim of this section is to show fractional derivatives in all directions including slanted ones in space-time. Naturally we will assume more regularity of the data in space direction. The following proposition is the space-analogue of Proposition~\ref{pro:Vt}. It can be derived by formally testing with $-\Delta u$
\begin{proposition}
\label{pro:DV}
Let Assumption~\ref{ass:phi} hold, $0<2a<T$ and $f\in L^{\phi^*}((0,T),(V^{1,\phi}(\Omega))^*)$. If additionally $\nabla f\in L^{\phi^*}(Q_T)$ we have for $u$ the solution of \eqref{eq} or \eqref{stokes}.
\[
 \int_a^T\int_{\setR^n}\abs{\nabla V(Du)}^2\dx\dt+\sup_{t\in[a,T]}\int_{{\setR^n}}\abs{\nabla u}^2\dx\leq c \int_0^T\int_{\setR^n}\phi^*(\abs{\nabla f})+\phi(\abs{\nabla u})\dx\dt.
\]
% The following local (in time) version is also available. For $\eta\in C^\infty(0,T)$ we find
% \[
%  \int_0^T\int_{{\setR^n}}\abs{\nabla V(Du)\eta}^2\dx\dt+\sup_{t\in[0,T]}\int_{{\setR^n}}\abs{\nabla u\eta}^2\dx\leq c \int_{\supp(\eta)}\int_{{\setR^n}}\phi^*(\abs{\nabla f})+\abs{\nabla u\eta}^p\dx\dt+\int_0^T\int_{{\setR^n}}\abs{\partial_t\eta}\abs{\nabla u}^2\dx.
% \]
\end{proposition}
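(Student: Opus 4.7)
The formal argument hinted at in the statement is to test the PDE with $-\eta(t)\Delta u$ for a smooth time cutoff $\eta$. To make this rigorous on $\setR^n$ (where no boundary terms appear and all translations are admissible) I would use the Nirenberg translation technique direction by direction: for each $i=1,\dots,n$ and small $h>0$, take as test function
\[
\xi_i := -\eta(t)\, D^{-h}_{x_i}D^h_{x_i}u(t,\cdot),
\]
where $\eta\in C^\infty([0,T])$ satisfies $\eta\equiv 0$ on $[0,a/2]$, $\eta\equiv 1$ on $[a,T]$, and $|\eta'|\le c/a$. This function is divergence-free whenever $u$ is (translations commute with $\divergence$, so the Stokes case \eqref{stokes} is covered), and by an approximation argument analogous to Lemma~\ref{lem:adm} one can pair it against the time derivative $u_t$ coming from Proposition~\ref{pro:Vt}.

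\textbf{Identification of the three terms.} After using the discrete integration-by-parts rule in $x_i$ to move $D^{-h}_{x_i}$ onto the other factor, the time term becomes
\[
-\int_0^T\!\!\int_{\setR^n} u_t\cdot\eta\, D^{-h}_{x_i}D^h_{x_i}u
=\int_0^T\!\!\int_{\setR^n}\eta\, D^h_{x_i}u_t\cdot D^h_{x_i}u
=\tfrac12\eta(T)\!\!\int_{\setR^n}\!|D^h_{x_i}u(T)|^2-\tfrac12\!\int_0^T\!\!\int_{\setR^n}\eta'|D^h_{x_i}u|^2,
\]
and the same identity holds with $T$ replaced by any $t\in[a,T]$, yielding an $L^\infty_t L^2_x$ bound. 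The elliptic contribution rewrites as
\[
\int_0^T\!\!\int_{\setR^n}\eta\, D^h_{x_i}(A(x,Du))\cdot D^h_{x_i}(Du),
\]
which, after splitting $A(x+he_i,Du(x+he_i))-A(x,Du(x))$ into an $x$-part (controlled by Assumption~\ref{ass:phi}(d)) plus a $Q$-part, is bounded below by $c\int_0^T\!\int\eta|D^h_{x_i}V(Du)|^2$ via \eqref{eq:hammer}. The right-hand side becomes $\int\eta D^h_{x_i}f\cdot D^h_{x_i}u$, which after Young's inequality \eqref{eq:young} and the standard bound $\int\phi(|D^h g|)\le c\int\phi(|\nabla g|)$ (valid under the $\Delta_2$ property inherent in Assumption~\ref{ass:phi1}) is controlled by $\int\phi^*(|\nabla f|)+\int\phi(|\nabla u|)$.

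\textbf{Conclusion and main obstacle.} Summing over $i$, absorbing a small multiple of the elliptic term into the right-hand side, and bounding the unsigned $\eta'$-contribution by $\tfrac{c}{a}\int_{a/2}^{a}\!\!\int|\nabla u|^2$ (which is already controlled via Proposition~\ref{pro:Vt} and Young's inequality, producing the $\int\phi(|\nabla u|)$ term on the right), I obtain an estimate
\[
\sup_{t\in[a,T]}\int_{\setR^n}|D^h u(t)|^2+\int_a^T\!\!\int_{\setR^n}|D^h V(Du)|^2\;\le\;c\!\int_0^T\!\!\int_{\setR^n}\phi^*(|\nabla f|)+\phi(|\nabla u|)\,\dx\dt
\]
uniformly in $h>0$. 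Passing to the limit $h\to 0$ using the standard weak compactness of difference quotients gives the statement. The main technical obstacle I anticipate is justifying the test function rigorously — namely, showing that $D^h_{x_i}u$ together with the cutoff $\eta$ can indeed be paired with $u_t$ (and, in the Stokes case, does not disturb the divergence-free constraint needed to remove the pressure); this mirrors the approximation of Lemma~\ref{lem:adm} but now in the spatial variable and must be performed before the $h\to 0$ passage.
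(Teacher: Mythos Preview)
Your proposal is correct and follows essentially the same route as the paper: test with the discrete second difference $-D^{-h}_{x_i}D^h_{x_i}u$, use Assumption~\ref{ass:phi}(c)/\eqref{eq:hammer} to bound the elliptic term below by $\abs{D^h_{x_i}V(Du)}^2$, apply Young's inequality \eqref{eq:young} on the right, and pass $h\to 0$. The paper's own proof is a brief sketch that explicitly drops the $x$-dependence of $F$ and omits the time localization; your version, with the cutoff $\eta$ and the explicit handling of the $x$-variation via Assumption~\ref{ass:phi}(d), is in fact more complete.
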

The proof of the above proposition for \eqref{eq} follows by the difference quotient technique. Basically the technique of \cite[Theorem~6.1]{DieE08} can be applied to the parabolic situation with general $F$. For the convenience of the reader we include a proof in the appendix.

 As before to simplify notation we shift the time interval by $A$, such that we can assume in the following
\begin{align}
 \label{eq:all}
\begin{aligned}
&\sum_{i=1}^n\int_0^T\int_{{\setR^n}}\abs{D^h_{x_i} V(Du)}^2+\abs{D^h_t V(Du)}^2+\abs{u_t}^2+\phi(\abs{Du})\dx\dt\\
& +\sup_{t\in[0,T]}\int_{{\setR^n}}\abs{\nabla u}^2+\abs{u_t}^2+F(Du)\dx\leq \frac{K}{A}.
\end{aligned}
\end{align}
for all $0<h\leq A$.
Where $K$ depends on $f$ by Proposition~\ref{pro:DV} and \eqref{eq:timeall}.
The next step is to redo the argument before in space directions. 
% As we do not have an $L^\infty(L^2)$ in space directions, we need to multiply by a cut of function in space. This implies, that we can not treat the p-Stokes case anymore, as the choice of our testfunction is not solenoidal anymore. In the following $u$ is a solution to \eqref{eq}.
\begin{lemma}
% \boxed{\text{Has to be simplified for the wholespace}}

\label{lem:ort}
 Let $u$ be a solution to \eqref{eq} or \eqref{stokes}, and let the assumptions on $f$ be such that \eqref{eq:all} holds. Then
% 
% Then for $\xi\in C^\infty_0(\setR)$ be a cut off function, such that $\text{dist}(\supp(\xi)e_i,\partial\Omega\cap\set{x\in\setR^n|x_j=0:j\neq i})>2h$ and $\xi^i(x_1,...,x_n):=\xi(x_i)$.
\[
 \Bigabs{\frac1h\dint_{\nsk 0}^h\int_0^T\int_{\setR^n}\Delta^s_{x_i}(u_t)(x)\Delta^s_{x_i}(u_{x_i})(x)\dx\ds}\leq c\frac{K}{A},
\]
Where $\frac{K}{A}$ is the right hand side of \eqref{eq:all}. 
\end{lemma}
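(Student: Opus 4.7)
The plan is to exploit whole-space translation invariance to reveal a second-difference cancellation, then use the PDE and the variational identity $A(x,Du)\cdot\partial_{x_i}Du=\partial_{x_i}[F(x,Du)]-(D_xF)(x,Du)\cdot e_i$ to get an $O(|s|)$ bound on $J(s):=\int_0^T\int_{\setR^n}\Delta^s_{x_i}u_t\cdot\Delta^s_{x_i}\partial_{x_i}u\dx\dt$.

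First I would expand the product and use translation invariance of the Lebesgue measure to cancel the two diagonal terms, leaving
\[
 J(s)=-\int_0^T\int_{\setR^n}u_t(x)\,\big(\Delta^s_{x_i}+\Delta^{-s}_{x_i}\big)\partial_{x_i}u(x)\dx\dt.
\]
Then $\int_0^h\partial_{x_i}u(x\pm se_i)\ds=\pm(u(x\pm he_i)-u(x))$ converts the target quantity to
\[
 \frac1{h^2}\int_0^h J(s)\ds=\frac2h\int_0^T\int_{\setR^n}u_t\cdot\Big[\partial_{x_i}u-\tfrac{u(\cdot+he_i)-u(\cdot-he_i)}{2h}\Big]\dx\dt.
\]

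Next I would apply the PDE by testing with the three admissible test functions $\partial_{x_i}u$ (justified by a difference-quotient regularization analogous to Lemma~\ref{lem:adm}), $u(\cdot+he_i)$, and $u(\cdot-he_i)$. Using $\partial_{x_i}g-\frac{g(\cdot+he_i)-g(\cdot-he_i)}{2h}=-\frac1{2h}\int_{-h}^h\Delta^s_{x_i}\partial_{x_i}g\,ds$ for $g=u$ and (weakly) $g=Du$, this rewrites the quantity of interest as
\[
 \frac1{h^2}\int_{-h}^h\Big(\int_0^T\int_{\setR^n}A(x,Du)\cdot\Delta^s_{x_i}\partial_{x_i}Du\dx\dt-\int_0^T\int_{\setR^n}f\cdot\Delta^s_{x_i}\partial_{x_i}u\dx\dt\Big)\ds.
\]
The chain-rule identity now yields the key cancellation: substituting $y=x+se_i$ in the shifted factor and using $A(y,Du(y))\cdot\partial_{x_i}Du(y)=\partial_{x_i}[F(y,Du)]-(D_xF)(y,Du)\cdot e_i$, the two $\int(D_xF)\cdot e_i$ pieces cancel, leaving
\[
 \int_{\setR^n}A(x,Du)\cdot\Delta^s_{x_i}\partial_{x_i}Du\dx=\int_{\setR^n}\Delta^{-s}_{x_i}[A(\cdot,Du)]\cdot\partial_{x_i}Du\dx.
\]

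The hard part will be showing this right-hand side is bounded by $c|s|\,K/A$ in $L^1_t$. I would use Assumption~\ref{ass:phi}(d),(e) together with \eqref{eq:hammer} to obtain
\[
 |\Delta^{-s}_{x_i}[A(\cdot,Du)]|\le c\sqrt{\phi''(|Du|+|\Delta^{-s}_{x_i}Du|+\mu)}\,|\Delta^{-s}_{x_i}V(Du)|+c|s|\phi'(|Du|+\mu),
\]
and then combine \eqref{eq:hammer2}, Cauchy--Schwarz, the Orlicz Young inequality \eqref{eq:young}, and the a~priori bounds from \eqref{eq:all}: $\|\Delta^s_{x_i}V(Du)\|_{L^2(Q_T)}\le c|s|\sqrt{K/A}$ (via \eqref{basic:2}), $\int\int|\partial_{x_i}V(Du)|^2\le cK/A$, and $\sup_t\int|V(Du)|^2\le cK/A$. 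The $f$-integral is handled analogously using the Jensen-based bound $\|\Delta^s_{x_i}f\|_{L^{\phi^*}}\le|s|\|\partial_{x_i}f\|_{L^{\phi^*}}$ paired with $\partial_{x_i}u\in L^\phi(Q_T)$. Finally $\frac1{h^2}\int_{-h}^{h}c|s|K/A\ds=cK/A$ yields the claim. The main technical obstacle is the Orlicz bookkeeping for the $A$-integral: controlling the finite-difference weight $\sqrt{\phi''(|Du|+|\Delta^{-s}Du|+\mu)}$ coming from \eqref{eq:hammer} against the infinitesimal weight $\sqrt{\phi''(|Du|+\mu)}|\partial_{x_i}Du|\sim|\partial_{x_i}V(Du)|$ from \eqref{eq:hammer2} requires careful Young-type splittings.
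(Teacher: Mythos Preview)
Your reduction is correct up to and including the identity
\[
\frac{1}{h^2}\int_0^h J(s)\,ds=\frac{2}{h}\int_0^T\int_{\setR^n}u_t\cdot\Big[\partial_{x_i}u-\frac{u(\cdot+he_i)-u(\cdot-he_i)}{2h}\Big]\,dx\,dt,
\]
which is precisely what the paper obtains by testing with the second $x_i$-difference quotient of $u_{x_i}$. The divergence is in how you treat the resulting $A$-term. After your partial summation you are left with
\[
\int_{\setR^n}\Delta^{-s}_{x_i}[A(\cdot,Du)]\cdot\partial_{x_i}Du\,dx,
\]
and your proposed Cauchy--Schwarz/Young splitting pairs $|\Delta^{-s}V(Du)|$ with the factor $\sqrt{\phi''(|Du|+|\Delta^{-s}Du|+\mu)}\,|\partial_{x_i}Du|$. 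The available bound \eqref{eq:hammer2} controls only $\sqrt{\phi''(|Du|+\mu)}\,|\partial_{x_i}Du|\sim|\partial_{x_i}V(Du)|$, and in the superquadratic regime ($\phi''$ increasing, e.g.\ $p>2$) the extra argument $|\Delta^{-s}Du|$ in the weight cannot be removed: at points where $|Du(x-se_i)|\gg|Du(x)|$ the ratio $\phi''(|Du|+|\Delta^{-s}Du|+\mu)/\phi''(|Du|+\mu)$ is unbounded, and no Young-type rearrangement repairs this without sacrificing the needed factor of $|s|$. Your route does go through when $\phi''$ is non-increasing (so for $p\le 2$), but not in general.

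The paper circumvents this by keeping the $A$-integral in the form
\[
\frac{1}{h}\int_{\setR^n}A(Du)\cdot\big(D^{h}_{x_i}Du+D^{-h}_{x_i}Du\big)\,dx
\]
and invoking Corollary~\ref{cor:schraeg} (the pointwise Taylor-type identity for the potential $F$), which rewrites $A(Du)\cdot D^{\pm h}_{x_i}Du$ as $D^{\pm h}_{x_i}F(Du)$ plus a remainder comparable to $\frac{1}{h}|\Delta^{\pm h}_{x_i}V(Du)|^2$. The $F$-differences telescope to zero upon integration over $\setR^n$, leaving only $\int|\Delta^{h}_{x_i}V(Du)/h|^2$, which is bounded directly by \eqref{eq:all}. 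This use of the variational structure $A=D_QF$ at the level of finite differences is the missing ingredient; it replaces your problematic pointwise product estimate by an exact algebraic cancellation.
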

\begin{proof}
 We use the test function $\frac1h\dint_{\nsk 0}^h\Delta^{-s}_{x_i}\Delta^s_{x_i}(u_{x_i})(x)\ds$. 
This implies for almost every $t\in [0,T]$
\begin{align*}
 (I)+(II)+(III)&:=\frac1h\dint_{\nsk 0}^h\int_{{\setR^n}}\Delta^s_{x_i}(u_t  )(x)\Delta^s_{x_i}(u_{x_i})(x)\dx\ds\\
% &+\frac1h\dint_{\nsk 0}^h\int u_t\Delta^s_v(u_v)\dt-\int_{T-a-h-s}^{T-a-h}\unite u_t\Delta^s_v(u_v)\dt\ds
% \\
&+\frac1{h}\int_{{\setR^n}} A(Du)(x)\cdot D\dint_{\nsk 0}^h 2u_{x_i}(x)-u_{x_i}(x+se_i)-u_{x_i}(x-se_i)\ds (x)\dx\\
&=\frac1h\dint_{\nsk 0}^h\int_{{\setR^n}}\Delta^s_{x_i}(f)\cdot\Delta^s_{x_i}(u_{x_i})\dx\ds=:(IV)
\end{align*}

The term $(IV)$ can be estimated 
\begin{align*}
 \abs{(IV)}\leq  \dint_{\nsk 0}^h\int_{{\setR^n}}\phi^*(\abs{D^s_{x_i}f(x)})+\phi(\abs{Du})\dx.
\end{align*}
The terms $(II)$ can be estimated exactly as in the proof of Proposition~\ref{pro:N12t},
\begin{align*}
 (II)&=\frac1{h}\int_{{\setR^n}}A(Du(x))\cdot \partial_i(Du(x))2+\frac1{h^2}\int_{{\setR^n}}A(Du(x))\cdot (Du(x)-Du(x+he_i))\dx\\
&\quad+\frac2{h^2}\int_{{\setR^n}}A(Du(x))\cdot (Du(x-he_i)-Du(x))\dx\\
&=-\frac2{h}\int_{{\setR^n}}\partial_iF(Du(x))\dx-\frac1{h}\int_{{\setR^n}}A(Du(x))(D^{h}_{x_i}(Du(x))+D^{-h}_{x_i}(Du(x))\dx.
\end{align*}
The first term vanishes, the second can be estimated by Corollary~\ref{cor:schraeg}: 
\begin{align*}
\abs{(II)}&\leq -\frac1{h}\int_{{\setR^n}}(D^h_{x_i}F(Du(x))+D^{-h}_{x_i}F(Du)(x))\dx\\
&\quad +c\int_{{\setR^n}}\Bigabs{\frac{\Delta^{-h}_{x_i} V(Du)}{h}}^2\leq c \frac{K}{A}.
\end{align*}
\end{proof}
We can combine Proposition~\ref{pro:N12t} and the last lemma to get diagonal directions.
\begin{lemma}
\label{lem:mixed}
 Let $u$ be a solution to \eqref{eq} or\eqref{stokes}, and let the assumption on $f$ be such that \eqref{eq:all} holds. Then for every $h,A>0$ 
% and $\xi\in C^\infty_0(\setR)$ be a cut off function, such that $\text{dist}(\supp(\xi)v,\partial\Omega\cap\set{x\in\setR^n|x\cdot v_1^t=0})>2h$ and $\xi^v(t,x_1,...,x_n):=\xi(x\cdot v_1^t)$
\begin{align*}
 & \Bigabs{\int_0^A\dint_{\nsk 0}^h\int_{a+h}^{T-a+h} 
\int_{\setR^n}\frac{\Delta^s_\slant(u_t)\cdot \Delta^s_{\slant} (\partial_\slant u)}{h}\dx\dt\ds\, da}
\leq \frac{K}{A}
\end{align*}
where $\frac{K}{A}$ depends (linearly) on the right hand side of \eqref{eq:all} by the characteristics of $F$.
\end{lemma}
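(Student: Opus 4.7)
The plan is to mimic the argument of Proposition~\ref{pro:N12t} and Lemma~\ref{lem:ort}, but adapted to a space--time slanted direction. Interpret $\slant$ as the direction $(1,e_i)$ in space--time, so that $\Delta_\slant^s g(t,x)=g(t+s,x+se_i)-g(t,x)$ and $\partial_\slant u = u_t+\partial_i u$. As a test function I would take
\[
\varphi(t,x) := \frac{1}{h}\dint_{\nsk 0}^{h}\Delta_\slant^{-s}\Delta_\slant^{s}(\partial_\slant u)(t,x)\,ds,
\]
which, after expansion, is a linear combination of spatial and temporal shifts of $u$ and of $u_t$. Each such shift is admissible thanks to \eqref{eq:all}, Proposition~\ref{pro:DV}, and (for the $u_t$-pieces) Lemma~\ref{lem:adm}; moreover, in the Stokes case every shift preserves the divergence-free constraint, so the pressure never enters.

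Plugging $\varphi$ into the weak formulation and using the partial-summation rules of the preliminary section produces, in analogy with the proof of Proposition~\ref{pro:N12t}, four pieces: the target quadratic piece
\[
(I) := \frac{1}{h}\dint_{\nsk 0}^{h}\!\!\int\!\Delta_\slant^s(u_t)\cdot\Delta_\slant^s(\partial_\slant u)\,dx\,dt\,ds,
\]
a time--boundary piece $(II)$ of the form $u_t\cdot\Delta^s_\slant(\partial_\slant u)$ on the two short intervals of width $s$ at the endpoints (controlled by $\sup_{\partial}\int(|u_t|^2+|\nabla u|^2)\,dx$), an elliptic piece $(III)=\frac{1}{h^2}\int A(Du)\cdot D(2\partial_\slant u - \text{two shifts})\,dx\,dt$, and a right--hand--side piece $(IV)$. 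I would then apply Corollary~\ref{cor:schraeg} in the direction $\slant$ to $(III)$: slanted monotonicity combined with the telescoping cancellation of the $F(Du)$-terms near $t\pm h, x\pm he_i$ turns $(III)$ into a sum of $\frac{1}{h}\dint$-averages of $F(Du)$ at the time boundary, plus error terms of order $\int|\Delta_\slant^h V(Du)|^2/h^2$. For $(IV)$ I would use Young's inequality, absorb $\delta(I)$ on the left, and exploit the regularity of $f$ provided by \eqref{eq:all}.

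The crucial final step is the cancellation obtained by integrating the parameter $a$ over $[0,A]$, exactly as in \eqref{eq:A1}--\eqref{eq:A2}. This converts the $\frac{1}{h}$-weighted boundary contributions of $F(Du)$ into quantities of order $\sup_{t}\int F(Du)\,dx\le K/A$. The ``space--type'' error $\int|\Delta_\slant^h V(Du)|^2/h^2$ is dominated, by the triangle inequality applied to $\Delta_\slant^h=\Delta^h_{x_i}\circ\tau_t^h+\Delta^h_t$, by the sum of $\int|\Delta^h_t V(Du)|^2/h^2$ and $\int|\Delta^h_{x_i}V(Du)|^2/h^2$, both of which are already controlled uniformly in $h$ by \eqref{eq:all}.

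The main obstacle I expect lies in $(III)$. Because $\slant$ couples time and space, Corollary~\ref{cor:schraeg} (formulated for a single space direction) cannot be invoked verbatim; I would first rewrite $A(Du)\cdot D\varphi$ so that only gradient-in-space differences of $u$ appear, and only then apply the slanted monotonicity to the resulting purely spatial slanted quotient. The bookkeeping delicacy is that expanding $\partial_\slant u = u_t + \partial_i u$ produces cross terms of the form $\Delta^h_t V(Du)\cdot\Delta^h_{x_i} V(Du)/h^2$; these must be handled by the Cauchy--Schwarz inequality together with the already-established bounds from Proposition~\ref{pro:N12t} and Lemma~\ref{lem:ort}, taking care that none of them are trapped on the wrong side of the estimate.
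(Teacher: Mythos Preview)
Your plan is essentially the paper's own proof: the same slanted test function $\frac{1}{h}\dint_0^h\Delta^{-s}_\slant\Delta^s_\slant(\partial_\slant u)\,ds$, the same $(I)$--$(IV)$ decomposition, the same use of \eqref{eq:A1}--\eqref{eq:A2} after integrating in $a$, and the same control of $\int|\Delta_\slant^h V(Du)|^2/h^2$ via \eqref{eq:all}.

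Two points deserve correction. First, your anticipated obstacle in $(III)$ is not there. Corollary~\ref{cor:schraeg} is stated for an arbitrary direction $v\in\mathcal{S}^n$, i.e.\ for any unit vector in space--time; its proof says explicitly that the argument of Lemma~\ref{lem:t} ``works for arbitrary directions''. Hence it applies verbatim to $v=\slant$, and the paper invokes it exactly this way. There is no need to split $\partial_\slant u=u_t+\partial_i u$, and the cross terms $\Delta^h_t V(Du)\cdot\Delta^h_{x_i}V(Du)/h^2$ you worry about never appear. Your proposed workaround would be a genuine detour.

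Second, the step ``absorb $\delta(I)$ on the left'' carried over from Proposition~\ref{pro:N12t} does not work here: $(I)=\frac{1}{h}\dint\!\int\Delta^s_\slant(u_t)\cdot\Delta^s_\slant(\partial_\slant u)$ is a mixed product, not a nonnegative square, and the lemma asserts a bound on its absolute value. The paper therefore estimates $(IV)$ without absorption, simply by Young's inequality and the crude pointwise bound $|\Delta^s_\slant(\partial_\slant u)|^2\le c(|u_t|^2+|Du|^2)$ together with the $\sup_t$-bounds from \eqref{eq:all}. With these two adjustments your outline coincides with the paper's argument.
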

\begin{proof}
% we have to localize in space, because the differntial quotients must not go outside the domain. We will use an interior testfunction
% $\zeta^i(x,s)= x+se_i\psi(x)$. Where $\psi\in C^\infty_0(\Omega_h)$. Where $\Omega_h:\set{x\in\Omega\,|\,d(x,\partial\Omega)>h}$. We take the following localized version of the "Queer Quotient"
% \[
%  \Delta_\slant^s z(t,x):=z(t+s,x+se_i\psi(x))-z(t,x)\text{ and }
% \partial_\slant z(t,x)=z_t(t,x)+\psi(x)z_{x_i}(t,x)
% \]
% Please observe, that
% \[
%  \partial_s z(t+s,\zeta^i(x,s))=\partial_\slant z(t+s,\zeta^i(x,s))\text{ and } \dint_{\nsk 0}^h\partial_s z(t+s,\zeta^i(x,s))\ds
% %=\frac{1}{h}\Delta_\slant^h z(t,x)
% =:D_\slant^h z(t,x).
% \]
We will use the testfunction
\[
 \frac1h\dint_{\nsk 0}^h\Delta^{-s}_\slant\Delta^s_\slant\partial_\slant u\ds,
%:= \frac1h\dint_{\nsk 0}^h \partial_\slant u(t,x)-\partial_\slant u(t+s,\zeta^i(x,s))-\partial_\slant u(t-s,\zeta^i(x,-s))\ds
\]
% 
% Let $\xi\in C^\infty_0(\setR)$ be a cut off function, such that $\text{dist}(\supp(\xi)e_i,\partial\Omega\cap\set{x\in\setR^n|x_j=0:j\neq i})>2h$. We define $\xi^i(x_1,...,x_n):=\xi(x_i)$.
% We use the test function $\frac1h\dint_{\nsk 0}^h\Delta^{-s}_\slant\Delta^s_\slant(u_t+u_{x_i})(t,x)\ds\xi^i$ 
which is admissible by the same argument as in the proof of Proposition~\ref{pro:N12t}.
We integrate over $[a+h,T-a-h]\times {\setR^n}$ and find 
\begin{align*}
 &(I)+(II)+(III)\\
&:=\frac1h\dint_{\nsk 0}^h\int_{{\setR^n}}\int_{a+h}^{T-a-h}\unite (u_t(t,x))\Delta^{-s}\Delta^s_\slant(\partial_\slant u)\dx\dt\ds\\
&\quad+\frac2{h}\int_{{\setR^n}}\int_{a+h}^{T-a-h}\unite  A(Du)(t,x)\cdot D(\partial_\slant u)(t,x)\dx\dt\\
&\quad-\frac1{h}\int_{{\setR^n}}\int_{a+h}^{T-a-h}\unite  A(Du)(t,x)D\dint_{\nsk 0}^h \partial_\slant u(t+s,x+e_is)
 +  \partial_\slant u(t-s,x-e_is)\ds\dx\dt\\
% &\quad +\frac2{h}\dint_{\nsk 0}^h\int_{a+h}^{T-a-h}\unite  \int_{{\setR^n}}A(Du)(x)\cdot D \partial_{\slant}(u_{x_i})(x)\dx\dt\ds\\
&=\frac1h\dint_{\nsk 0}^h\int_{{\setR^n}}\int_{a+h}^{T-a-h}\unite f\Delta^{-s}_\slant\Delta^s_\slant(\partial_\slant u)\ds\dx\dt
%&\quad+\int_{a+h-s}^{a+h}f\Delta^s_\slant(u_t+u_{x_i})\xi^i\dt-\int_{T-a-h-s}^{T-a-h}\unite f\Delta^s_\slant(u_t+u_{x_i})\xi^i\dt\dx\ds
=:(IV)
\end{align*}
%Which shows, that it is a convenient test function.
We start by estimating $(I)$ with partial summation
\begin{align*}
(I)&= \frac1h\dint_{\nsk 0}^h\int_{{\setR^n}}\int_{a+h}^{T-a-h-s}\unite (\Delta^{s}_\slant u_t)(t,x)\Delta^s_\slant(\partial_\slant u)\dx\dt\ds\\
&\quad+\frac1h\dint_{\nsk 0}^h\int_{{\setR^n}}\int_{a+h-s}^{a+h}u_t(t+s,x+e_is)\Delta^s_\slant(\partial_\slant u)(t,x)\dt\dx-\int_{{\setR^n}}\int_{T-a-h-s}^{T-a-h}\unite u_t \Delta^s_\slant\partial_\slant u\dt\dx\ds\\
&=(I)_1+(I)_2-(I)_3.
\end{align*}
We find that 
\[
(I)_1+(II)+(III)= -(I)_2+(I)_3+(IV)
\]
We estimate 
\[
 \abs{-(I)_2+(I)_3}\leq c\sup_{t\in[a,a+2h]\cup[T-a-2h,T-a]}\int_{\setR^n} \abs{u_t}^2+\abs{Du}^2\dx,
\]
which can be estimated by \eqref{eq:all} (after taking the supremum of some terms over time).

We estimate $(IV)$ by
\begin{align*}
(IV)&=\frac1h\dint_{\nsk 0}^h\int_{{\setR^n}}\bigg(\int_{a+h}^{T-a-h-s}\unite \Delta^{s}_\slant f \Delta^s_\slant(\partial_\slant u)\dt\\ 
&\quad+\int_{a+h-s}^{a+h}f(t+s,x+e_i s)\Delta^s_\slant(u_t+u_{x_i})(t,x)\dt-\int_{T-a-h-s}^{T-a-h}\unite f(t,x)\Delta^s_\slant(u_t+u_{x_i})(t,x)\dt\bigg)\dx\ds,
% &= \frac1h\dint_{\nsk 0}^h\int_{{\setR^n}}\int_{a+h+s}^{T-a-h-s}\unite \Delta^{-s}_\slant\Delta^{s}_\slant f \partial_\slant u\dt\\ 
% &\quad - \int_{a+h}^{a+h+s}f\Delta^s_\slant(u_t+u_{x_i})\dt + \int_{T-a-h-s}^{T-a-h}\unite f\Delta^s_\slant(u_t+u_{x_i})\dt\\
% &\quad+\int_{a+h-s}^{a+h}f\Delta^s_\slant(u_t+u_{x_i})\dt-\int_{T-a-h-s}^{T-a-h}\unite f\Delta^s_\slant(u_t+u_{x_i})\dt\dx\ds\\
\end{align*}
 where the terms depending on $u$ can again be estimated by \eqref{eq:all}.
This implies that
\begin{align}
\begin{aligned}
 \abs{(IV)}&\leq \int_{{\setR^n}}\dint_{\nsk 0}^h\int_{a+h}^{T-a-h-s}\unite \abs{D^s_\slant f}^2\ds+\abs{Du}^2+\abs{u_t}^2\dx\dt\\
&+2\int_{{\setR^n}}\dint_{a}^{a+2h}\abs{u_t}^2+\abs{Du}^2\dx\dt 
+ 2\int_{{\setR^n}}\dint_{a+h}^{a+2h}\abs{f}^2\dx\dt\\
&+2\int_{{\setR^n}}\dint_{T-a-2h}^{T-a}\abs{u_t}^2+\abs{Du}^2\dx\dt 
+ 2\int_{{\setR^n}}\dint_{T-a-2h}^{T-a-h} \abs{f}^2\dx\dt\\
&\leq \int_{{\setR^n}}\dint_{\nsk 0}^h\int_{a+h}^{T-a-h-s}\unite \abs{D^s_\slant f}^2\ds+\abs{Du}^2+\abs{u_t}^2\dx\dt\\
&+2\sup_{(a,a+2h)\cup(T-a-2h,T-a)}\int_{\setR^n}\abs{u_t}^2+\abs{Du}^2+\abs{f}^2\dx
\end{aligned}
\end{align}

The terms $(II)$ and $(III)$ will be estimated as in the analogous estimates before.
\begin{align*}
 (II)&=\frac{2}{h}\int_{{\setR^n}}\int_{a+h}^{T-a-h}\unite  \partial_\slant F(Du)\dx\dt\\
&= \frac{2}{h}\int_{{\setR^n}}F(Du(T-a-h))-F(Du(a+h))\dx
\end{align*}
The term $(III)$
we need to use Corollary~\ref{cor:schraeg} 
\begin{align*}
 (III)& =-\frac1{h^2}\int_{{\setR^n}}\int_{a+h}^{T-a-h}\unite  A(Du)(t,x)D (u(t+h,x+e_ih)-u(t-h,x-e_ih))\ds\dx\dt\\ \\
&\geq \frac{1}{h^2} \int_{{\setR^n}}\int_{a+h}^{T-a-h}\unite  \big(F(Du(t-h,x-e_ih))-F(Du(t+h,x+e_ih))\big)\\
&\quad -\frac{c_2}{h^2} \abs{\Delta^{-h}_\slant V(Du)(t)}+\frac{c}{h^2}\abs{\Delta^{h}_\slant V(Du)(t)}^2\dx\dt\\
&\geq-c\int_{a}^{T-a} \int_{{\setR^n}}\Bigabs{\frac{\Delta^{h}_\slant V(Du)}{h}}^2\dx\dt\\
&\quad+ \frac2h\dint_a^{a+2h}\int_{{\setR^n}}F(Du(s))\dx\ds -\frac2h\dint_{T-a-2h}^{T-a}\int_{{\setR^n}}F(Du(s))\dx\ds.
\end{align*}

Therefore, we integrate $a$ over $(0,A)$ and have
\begin{align*}
\abs{\int_0^A(I)_1\da}&\leq \int_0^A\abs{-(I)_2+(I)_3}+\abs{(IV)}+c\int_{a}^{T-a} \int_{{\setR^n}}\Bigabs{\frac{\Delta^{h}_\slant V(Du)}{h}}^2\dx\dt\da\\
 &\quad+\biggabs{\int_0^A\frac2h\dint_a^{a+2h}\int_{{\setR^n}}F(Du(s))-F(Du(a+h))\dx\ds\da}\\
 &\quad +\biggabs{\int_0^A\frac2h\dint_{T-a-2h}^{T-a}\int_{{\setR^n}}F(Du(s))-F(Du(T-a-h))\dx\ds\da}
\end{align*}
 By \eqref{eq:A1} and \eqref{eq:A2} we gain
\begin{align*}
\abs{\int_0^A(I)_1\da}&\leq c\sup_{t\in[0,A+2h]\cup[T-A-2h,T]}\int_{\setR^n} \abs{u_t}^2+\abs{Du}^2\dx\\
&+\int_{{\setR^n}}\dint_{\nsk 0}^h\int_{h}^{T-h-s}\abs{D^s_\slant f}^2\ds+\abs{Du}^2+\abs{u_t}^2\dx\ds\dt\\
&+2\sup_{(0,A+2h)\cup(T-A-2h,T)}\int_{\setR^n}\abs{u_t}^2+\abs{Du}^2+\abs{f}^2\dx\\
&+c\int_{0}^{T} \int_{{\setR^n}}\Bigabs{\frac{\Delta^{h}_\slant V(Du)}{h}}^2\dx\dt+\sup_{(0,A+2h)\cup (T-A-2h,T)}\int_{\Omega}\phi(\abs{Du})\dx.
\end{align*}
All terms can either be estimated by \eqref{eq:all} such that we gain the result.
\end{proof}
Theorem~\ref{thm:main2} follows from the following proposition 
%by the approximation introduced by \eqref{eq:app} (or \eqref{app:stokes}) 
and Lemma~\ref{lem:diening}.
\begin{proposition}
 \label{thm:alz} 
Let $F$ hold Assumption~\ref{ass:p} or Assumption~\ref{ass:phi} on the wholespace $\setR^n$. Let $u_0\in L^2(\setR^n)$ and $f\in L^{\phi^*}([0,T),(V^{1,\phi}_0(\setR^n))^*)$.
Let $u$ be a solution of \eqref{eq} or \eqref{stokes}. 

If additionally $f\in W^{1,2}((0,T),\Omega)$ and $\nabla f\in L^{\phi^*}(Q_T)$, then, 
\begin{align*}
 & \dint_{\nsk 0}^h\int_{A+h}^{T-A-h}\int_{\Omega}\frac{\abs{\Delta^s_{x_i}(u_t)(t)}^2}{\sqrt{h}}\dx\dt\ds
\leq \frac{K}{A},
\end{align*}
for every $A\geq h>0$. 
The constant $K$ only depends on the characteristics of $F$ and the regularity assumptions of $f$.
\end{proposition}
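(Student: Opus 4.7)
The plan is to test the weak formulation of~\eqref{eq} or~\eqref{stokes} on the time window $[a+h,T-a-h]$ with
\[
\psi(t,x):=\frac{1}{h}\int_0^h \Delta^{-s}_{x_i}\Delta^s_{x_i} u_t(t,x)\,ds,
\]
and then integrate in $a\in[0,A]$, mimicking the scheme of Proposition~\ref{pro:N12t} but replacing the time-direction second difference by a spatial one in the $x_i$ coordinate. Since $\psi$ is a finite linear combination of $u_t$ and its spatial translates, it is admissible by Lemma~\ref{lem:adm} together with translation invariance in $\setR^n$. After summation by parts in $x_i$, the time-derivative term in the equation produces
$
\frac{1}{h}\int_0^h\int|\Delta^s_{x_i}u_t|^2\,dx\,ds,
$
which is (up to the outer $1/\sqrt h$) precisely the quantity the proposition wishes to bound.

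The forcing contribution becomes $\frac{1}{h}\int_0^h\int\int\Delta^s_{x_i}f\cdot\Delta^s_{x_i}u_t$; using $f\in\mathcal{N}^{1/2,2}(Q_T)$, so $\int|\Delta^s_{x_i}f|^2\lesssim s$, Cauchy--Schwarz bounds it by a constant times $\sqrt h$ times the square root of the time term, which Young's inequality absorbs into the left-hand side while leaving behind the $\sqrt h$ factor that will ultimately match the target $1/\sqrt h$ scaling in the denominator (and prevent the argument from accidentally delivering the stronger, and generally false, Nikolskij~$1/2$ spatial bound).

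The elliptic term is the principal obstacle. After summation by parts in $x_i$ and then integration by parts in time, it reduces modulo boundary terms to $-\frac{1}{h}\int_0^h\int\int \partial_t A(Du):\Delta^{-s}_{x_i}\Delta^s_{x_i}Du\,ds$. Using $\partial_tA(Du)=D_Q A(Du)\cdot Du_t$ and Assumption~\ref{ass:phi}(e), which makes $D_QA(Du)$ a positive bilinear form equivalent to $\phi''(|Du|+\mu)$, a Cauchy--Schwarz inequality for this form yields
\[
\Big|\int D_QA(Du)\,Du_t:\Delta^{-s}_{x_i}\Delta^s_{x_i}Du\,dx\Big|\leq \Big(\int\phi''(|Du|+\mu)|Du_t|^2\,dx\Big)^{1/2}\Big(\int\phi''(|Du|+\mu)|\Delta^s_{x_i}Du|^2\,dx\Big)^{1/2}.
\]
By~\eqref{eq:hammer2} the first factor equals $\int|\partial_tV(Du)|^2$, controlled by Proposition~\ref{pro:Vt}; by~\eqref{eq:hammer2} again, the second equals $\int|\Delta^s_{x_i}V(Du)|^2\leq s^2\int|\nabla V(Du)|^2$, controlled by Proposition~\ref{pro:DV}. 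The estimate is $O(s)$ in $s$, and integration in $s\in[0,h]$ delivers the desired $O(h)$ control. The point here is that although $Du_t$ carries no spatial-derivative control in the degenerate setting, its $\phi''$-weighted $L^2$ norm is already estimated by $|\partial_tV(Du)|$.

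The time-boundary terms from the integration by parts have the form $\int\Delta^s_{x_i}A(Du):\Delta^s_{x_i}Du\sim\int|\Delta^s_{x_i}V(Du)|^2$ evaluated at $t=a+h$ and $t=T-a-h$; these are handled by the $a$-averaging cancellation trick used in~\eqref{eq:A1}--\eqref{eq:A2}, which trades the pointwise-in-time evaluation for a time-integrated norm of $\nabla V(Du)$ controlled by Proposition~\ref{pro:DV}. The most delicate part of the proof will be a careful bookkeeping of the powers of $h$ and of $1/A$ through this chain of summations and Cauchy--Schwarz applications so that the force-and-elliptic splitting produces the precise $\sqrt h$ factor and not simply $h$. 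Finally, applying Lemma~\ref{lem:diening} converts the averaged estimate in $s\in[0,h]$ into the Nikolskij~$1/4$ regularity for $u_t$ in space claimed in Theorem~\ref{thm:main2}.
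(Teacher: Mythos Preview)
Your scheme has a genuine gap in the treatment of the elliptic term. After integrating by parts in time you arrive at
\[
\int_{\setR^n} D_QA(Du(x))\,[Du_t(x),\,\Delta^{-s}_{x_i}\Delta^s_{x_i}Du(x)]\,dx,
\]
and your Cauchy--Schwarz for the form $D_QA(Du(x))$ produces the factor $\int\phi''(\mu+|Du(x)|)\,|\Delta^{s}_{x_i}Du(x)|^2\,dx$. You then invoke~\eqref{eq:hammer2} to identify this with $\int|\Delta^s_{x_i}V(Du)|^2$. But~\eqref{eq:hammer2} is the \emph{infinitesimal} relation, valid for derivatives; for finite differences the correct identity is~\eqref{eq:hammer}, which carries the weight $\phi''\big(\mu+|Du(x)|+|\Delta^s_{x_i}Du(x)|\big)$, not $\phi''(\mu+|Du(x)|)$. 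When $\phi''$ is increasing (e.g.\ $p\geq 2$) your inequality does follow, but when $\phi''$ is decreasing (e.g.\ $p<2$, and certainly the degenerate case $\mu=0$) it fails completely: near a zero of $Du$ one has $\phi''(\mu+|Du(x)|)\to\infty$ while $|\Delta^s_{x_i}V(Du)|^2$ stays finite. Since the proposition is stated for general $\phi$ (in particular all $1<p<\infty$), this breaks the argument.

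The paper avoids this obstruction by never forming $\partial_tA(Du)$. Instead of testing with spatial second differences of $u_t$, it writes $\Delta^s_{x_i}u_t=\Delta^s_{\nearrow}u_t-\Delta^s_t u_t(\cdot+se_i)$ and controls the diagonal piece by testing with second differences of $\partial_{\nearrow}u=u_t+u_{x_i}$ (Lemma~\ref{lem:mixed}). The crucial gain is that $A(Du)\cdot D\partial_{\nearrow}u=\partial_{\nearrow}F(Du)$ is a total directional derivative, so the elliptic contribution reduces via Corollary~\ref{cor:schraeg} to boundary terms in $F(Du)$ plus $\int|D^h_{\nearrow}V(Du)|^2$; no weighted Cauchy--Schwarz with the wrong weight is ever needed. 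The $\sqrt{h}$ (rather than $h$) in the final estimate comes not from the force but from the cross terms $M_2,M_3,M_4$ in~\eqref{eq:Mterms}, which are bounded by pairing the already-established $\mathcal{N}^{1/2}$ time regularity of $u_t$ (Proposition~\ref{pro:N12t}) against $\sup_t\|\nabla u(t)\|_{L^2}$; this is where your accounting (``the force leaves behind $\sqrt{h}$'') is also off---after Young's inequality the force term contributes $O(h)$, not $O(\sqrt{h})$.
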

\begin{proof}
We start with a few basic calculations. We reformulate
\begin{align*}
\Delta^s_{x_i}u_t&= u_t(t,x+e_is)-u_t(t,x)=u_t(t+s,x+e_is)-u_t(t,x) + u_t(t,x+se_i)-u_t(t+s,x+se_i)\\
&= \Delta^s_\slant(u_t)(t,x)+\Delta^s_tu_t(x+se_i).
\end{align*}

Therefore, we find
\begin{align*}
\abs{\Delta^s_{x_i}u_t}%&= u_t(t,x+e_is)-u_t(t,x)=u_t(t+s,x+e_is)-u_t(t,x) + u_t(t+s,x+se_i)-u_t(t,x+se_i)\\
&\leq \abs{ \Delta^s_\slant(u_t)(t,x)}+\abs{\Delta^s_tu_t(x+se_i)}.
\end{align*}
The last term can eventually be estimated by Proposition~\ref{pro:N12t}. Therefore, once we can estimate the "Queer Quotient", we gain the result.
He is estimated by
\[
 \abs{\Delta^s_\slant(u_t)(t,x)}^2 =\Delta^s_\slant(u_t)(t,x)\cdot\Delta^s_\slant(u_t+u_{x_i})(t,x)-\Delta^s_\slant(u_t)(t,x)\cdot\Delta^s_\slant u_{x_i}(t,x)= S_0-S_1
\]
The term $S_0$ can be estimated by Lemma~\ref{lem:mixed}.

We analyse $S_1$ 
\begin{align}
\label{eq:Mterms}
\begin{aligned}
 S_1&=\Delta^s_{x_i} (u_t)(t,x)\cdot \Delta^s_{x_i} (u_{x_i})+\Delta^s_{x_i} (u_t)(t,x)\cdot \Delta^s_{t} (u_{x_i})\\
&\quad+\Delta^s_{t} (u_t)(t,x)\cdot \Delta^s_{x_i} (u_{x_i})+\Delta^s_{t} (u_t)(t,x)\cdot \Delta^s_{t} (u_{x_i})\\
&:= M_1+M_2+M_3+M_4
\end{aligned}
\end{align}
This implies  by Proposition~\ref{pro:N12t} and Lemma~\ref{lem:mixed}
\begin{align*}
  \dint_{\nsk 0}^h&\int_{A+h}^{T-A-h}\int_{\setR^n}\abs{\Delta^s_{x_i}(u_t)(t)}^2\dx\dt\ds\\
&\leq \dint_{\nsk 0}^A\dint_{\nsk 0}^h \int_{\setR^n}\int_{a+h}^{T-a-h}\unite\abs{\Delta^s_{x_i}(u_t)(t)}^2\dx\dt\ds\da\\
&\leq \dint_{\nsk 0}^A\dint_{\nsk 0}^h\int_{A+h}^{T-A-h}\int_{\setR^n}\abs{\Delta^s_{t}(u_t)(t)}^2\dx\dt\ds\\
&\quad + \Bigabs{\int_{\setR^n}\dint_{\nsk 0}^A\dint_{\nsk 0}^h\int_{a+h}^{T-a-h}\unite  
\Delta^s_\slant(u_t)(t)\cdot \Delta^s_{\slant} (\partial_\slant u)\dx\dt\ds\, da}\\
&\quad +  \Bigabs{\int_{\setR^n}\dint_{\nsk 0}^A\dint_{\nsk 0}^h
\int_{a+h}^{T-a-h}\unite  M_1+M_2+M_3+M_4\dx\dt\ds\, da}\\
&\leq \frac{hK}{A} +  \Bigabs{\int_{\setR^n}\dint_{\nsk 0}^A\dint_{\nsk 0}^h\int_{a+h}^{T-a-h}\unite M_1+M_2\dx\dt\ds\, da} +\dint_{\nsk 0}^h\int_{h}^{T-h} 
\int_{\setR^n}\abs{M_1+M_2}\dx\dt\ds.
\end{align*}
$M_1$ can be estimated by Lemma~\ref{lem:ort}.
\begin{align}
\label{befor}
 \Bigabs{\dint_{\nsk 0}^A\dint_{\nsk 0}^h\int_{a-h}^{T-a-h}\unite M_1\dx\dt\ds\, da}\leq \frac{Kh}{A}.
\end{align}

We are left to estimate the integrals with $M_2,M_3$ and $M_4$. Unlike the other terms they can only be estimated by less, i.e. by $\sqrt{h}K/A$.
We estimate the integral of $M_3$ with H\"older and Young. 
\begin{align*}
\frac1{\sqrt{h}} &\int_{\setR^n}\int_0^{T-h}\dint_{\nsk 0}^h \abs{\Delta^s_{t} (u_t)(t,x)\cdot \Delta^s_{x_i} (u_{x_i})}\ds\dt\dx\\
%&\leq\frac1{\sqrt{h}}\sup_{[0,T]}\int_{\setR^n}2\abs{Du} \dx\int_{\setR^n}\int_0^{T-h}\dint_{\nsk 0}^h \abs{\Delta^s_{t} (u_t)(t,x)}\ds\dt\\
&\leq c\sup_{[0,T]}\int_{\setR^n}\abs{Du}^2\dx+c\int_{\setR^n}\int_0^{T-h}\dint_{\nsk 0}^h \frac{\abs{\Delta^s_{t} (u_t)(t,x)}^2}{h}\ds\dt\dx.
\end{align*}
The integral with $M_4$ can be estimated analogous:
\begin{align*}
\frac1{\sqrt{h}} &\int_{\setR^n}\int_0^{T-h}\dint_{\nsk 0}^h \abs{\Delta^s_{t} (u_t)(t,x)\cdot \Delta^s_{t} (u_{x_i})}\ds\dt\dx\\
&\leq c\sup_{[0,T]}\int_{\setR^n}\abs{Du}^2\dx+c\int_{\setR^n}\int_0^{T-h}\dint_{\nsk 0}^h \frac{\abs{\Delta^s_{t} (u_t)(t,x)}^2}{h}\ds\dt\dx.
\end{align*}

To estimate $M_2$ we use partial summation and proceed as before
\begin{align}
\label{eq:parsum}
\begin{aligned}
 &\biggabs{\frac1{\sqrt{h}}\dint_{\nsk 0}^A\dint_{\nsk 0}^h \int_{\setR^n}\int_{a+h}^{T-a-h}\unite \Delta^s_{x_i} (u_t)(t,x)\cdot \Delta^s_{t} (u_{x_i})\dt\ds\dx\da}\\
 &\leq\biggabs{\frac1{\sqrt{h}}\dint_{\nsk 0}^A  \int_{\setR^n}\dint_{\nsk 0}^h \int_{a+h+s}^{T-a-h}\unite  \Delta^{-s}_{t} (u_t)(t,x)\cdot \Delta^{-s}_{x_i} u_{x_i}\dt\ds\dx\da}\\
 &\quad+\biggabs{\frac1{\sqrt{h}}\dint_{\nsk 0}^A \dint_{\nsk 0}^h \int_{\setR^n}\int_{T-a-h}^{T-a-s}\!\!\!\!\!\!\!\!\!\!\!\!\! \Delta^{s}_{x_i} (u_t)(t,x)\cdot(u_{x_i})\dt
 - \int_{a+h}^{a+h+s} \Delta^{s}_{x_i} (u_t)(t,x)\cdot(u_{x_i})\dt\ds\dx\da }\\&=:(I)+(II)
 \end{aligned}
\end{align}
The estimate on $(II)$ can be estimated optimally
 \begin{align}
 \label{eq:2opt}
 (II) &\leq 2\sqrt{h}\sup_{[0,2h]\cap[T-2h,T]}\int_{\setR^n}\abs{Du}^2+\abs{u_t}^2\dx.
 \end{align}
 The term $(I)$ is again estimated by
 \[
 (I)\leq c\sup_{[0,T]}\int_{\setR^n}\abs{Du}^2\dx+c\int_{\setR^n}\int_0^{T-h}\dint_{\nsk 0}^h \frac{\abs{\Delta^s_{t} (u_t)(t,x)}^2}{h}\ds\dt\dx.
 \]
\end{proof}
Next we will prove Theorem~\ref{cor:alz}. In the following we assume power-law structure. I.e. that Assumption~\ref{ass:p} hold for \eqref{eq} which is assumed to be in terms of full gradients (i.e. $Du\equiv \nabla u$).

\begin{proof}[Proof of Theorem~\ref{cor:alz}]
We will use the following Sobolev embedding
\[
 \int_0^T\int_{\setR^n}\abs{V(Du)}^\frac{2(n+1)}{n-1}\dx\dt\leq \bigg(\int_0^T\int_{\setR^n}\abs{\nabla V(Du)}^2+\abs{\partial_t V(Du)}^2\dx\dt\bigg)^\frac{n+1}{n-1}<\infty.
\]
We will also use the following estimate in case $n\geq 3$. 
\begin{align*}
 \int_0^T\int_{\setR^n}\abs{\nabla u}^{p+\frac{4}{n}}\dx\dt&\leq \int_0^T\int_{\setR^n}\bigg(\abs{\nabla u}^{p\frac{n}{n-2}}\dx\bigg)^\frac{n-2}{n}\int_{\setR^n}\bigg(\abs{\nabla u}^{\frac{4}{n}\frac{n}{2}}\dx\bigg)^\frac{2}{n}\dt\\
&=\int_0^T\bigg(\int_{\setR^n}\abs{V(D u)}^{\frac{2n}{n-2}}\dx\bigg)^\frac{n-2}{n}\dt\sup_{t\in (0,T)}\bigg(\int_{\setR^n}\abs{\nabla u}^{2}\dx\bigg)^\frac{2}{n}\\
&\leq\int_0^T\int_{\setR^n}\abs{\nabla V(D u)}^{2}\dx\dt\sup_{t\in (0,T)}\bigg(\int_{\setR^n}\abs{\nabla u}^{2}\dx\bigg)^\frac{2}{n}
\end{align*}
Where we used Sobolev embedding. (By Korn's inequality the same estimates hold also in case of symmetric gradients).
Observe, that the above estimate implies $\nabla u\in L^q(Q_T)$, where $q:=\max\bigset{\frac{p(n+1)}{n-1},p+\frac{4}{n},2}$.
If $1<p\leq 2-\frac{4}{n}$ the best integrability of $\nabla u$ will be $2$, and Theorem~\ref{thm:main2} can not be improved.
In case $p>\min{2-\frac{4}{n},1}$ we enter the proof of Proposition~\ref{thm:alz} below \eqref{befor}. As mentioned there
we only need to estimate the terms involving $M_2,M_3,M_4$ (defined in \eqref{eq:Mterms}, as all other terms can be estimated by $hK/A$.

 We start by estimating $M_4$. We define $2\beta=\min\set{\frac12+\theta,1}$, where $\theta$ will be fixed at the end of the proof. Young's inequality implies.
\begin{align*}
 \frac{1}{h^{2\beta}}\abs{\Delta^s_{t} (u_t)\cdot \Delta^s_{t} (u_{x_i})}
&\leq \frac1{2h}\abs{\Delta^s_{t} (u_t)}^2+\frac1{2h^{2\theta}}\abs{\Delta^s_{t} (u_{x_i})}^2
\end{align*}
The first term can be estimated by Proposition~\ref{pro:N12t}. The second term is a mixed derivative.

%  We estimate  
% \[
%  \frac1{h^\alpha}\abs{\Delta^s_{t} (u_{x_i})}^2\leq \frac1{h^\alpha}\abs{u_{x_i}(t)\cdot \Delta^s_{t} (u_{x_i})(t)}+\frac1{h^\alpha}\abs{u_{x_i}(t+s)\cdot \Delta^{-s}_{t} (u_{x_i})(t+s)}.
% \]
% As the two summands are essentially of the same structure it is enough to estimate the first. The second can be estimated analogously.
\begin{enumerate}
 \item Case $p\in(\max\set{1,2-\frac{4}{n}},2]$. By Young's inequality and the definition of $V_i(Du)$ 
\begin{align*}
&\frac1{h^{2\theta}} \abs{\Delta^s_{t} (u_{x_i})(t)}^2
\leq \abs{\Delta^s_{t} (u_{x_i})(t)}^{2-2\theta }
\Big( \frac{(\abs{\nabla u(t+s)}+\abs{\nabla u(t)})^{p-2}}{(\abs{\nabla u(t+s)}+\abs{\nabla u(t)})^{p-2}}
\frac{\abs{\Delta^s_{t} (u_{x_i})(t)}^2}{h^2}\Big)^\theta\\
&\leq (\abs{\nabla u(t)}+\abs{\nabla u(t+s)})^\frac{2-2\theta}{1-\theta}
(\abs{\nabla u(t+s)}+\abs{\nabla u(t)})^\frac{(2-p)\theta}{1-\theta} +  c\abs{D_t^s V_i(\nabla u)(t)}^2= (I)+(II).
\end{align*}
The function $(II)$ is integrable. By Young's inequality we find 
\[
 (I)\leq c\abs{\nabla u(t)}^\frac{2-\theta p}{1-\theta}+c\abs{\nabla u(t+s)}^\frac{2-\theta p}{1-\theta}
\]
which is integrable if $\frac{2-\theta p}{1-\theta}=q$. Calculations imply, that we can choose
\[
 \theta=\max\Bigset{\frac{p+2-(2-p)n}{2p},1-\frac{n}4(2-p)}.
\]
Observe, that $\theta\geq \frac12$ for all $p\in [\max\set{1,2-\frac{4}{n}},2]$. As $\beta=\min\set{\frac12,\frac14+\frac\theta2}$, we have that $\beta=\frac12$ for $p\in [2-\frac{2}{n},2]$. 
\item Case $p\in [2,\infty)$ we fix $\theta=\frac{2}{p}$ and estimate  
\begin{align*}
&\frac1{h^{2\theta}} \abs{\Delta^s_{t} (u_{x_i})(t)}^2
\leq \abs{\Delta^s_{t} (u_{x_i})(t)}^{2-2\theta-\theta(p-2)}
\Big( (\abs{\nabla u(t+s)}+\abs{\nabla u(t)})^{p-2}
\frac{\abs{\Delta^s_{t} (u_{x_i})(t)}^2}{h^2}\Big)^\theta\\
&=  c\abs{D_t^s V_i(\nabla u)(t)}^{2\theta}
\end{align*}
which is locally integrable. If $u$ is space periodic, so is $u_t$ and for functions of this type local integrable functions are also globally integrable.
This implies that $\beta=\frac12$ for $p\in [2,4]$ if $u\in V^{1,p}_{\text{per}}(\setR^n)$.
% we fix $\frac{2}{\alpha}=\max\set{p-2,2}$ which implies that $\beta=\min\set{\frac12,\frac{1}{4}+\frac{2}{2(p-2)}}$. Young's inequality for exponents $\frac{\alpha}{2}+\frac{2-\alpha}{2}=1$ implies
% \[
%  \frac1{h^\alpha}\abs{\Delta^s_{t} (u_{x_i})}^2\leq \abs{Du}\Big(\frac{\abs{\Delta^s_{t} (u_{x_i})(t)}}{h}\Big)^\alpha \abs{\Delta^s_{t} (u_{x_i})(t)}^{1-\alpha}\leq c\abs{\partial_t V_i(Du)}^2+\abs{Du}^p + c\abs{Du}^\frac{2(1-\alpha)}{2-\alpha}.
% \]
% We need $\frac{4}{p}(\frac{1-\alpha}{2-\alpha})=\frac{2(n+1)}{n-1}$. This implies
% \[
%  \alpha=\min{1,2+\frac{2(n+p-1}{n(p-2)+p+2}}\text{ and }
% \beta=\min\bigset{\frac{1}{2},1+\frac{n+p-1}{n(p-2)+p+2}}.
% \]
\end{enumerate}
The term $\abs{M_3}$ can be estimated in the very same way. On the term $M_2$ we have to apply the partial summation as is done in \eqref{eq:parsum}; there the term $(II)$ can be estimated optimally (see \eqref{eq:2opt} and $(I)$ analogous to the term $M_4$ above.
\end{proof}
\begin{remark} We will give some explanation why large $p$ decrease the differentiability of $u_t$. Let us consider the model case \eqref{eq:plap}.
If $p>2$, the quantity $V(Du)$ is differentiable and not $Du$ itself. By \eqref{eq:hammer2} one observes that 
\[
 \int \abs{\partial_tV(Du)}^2 \sim \int \abs{Du}^{p-2}\abs{\partial_t Du}^2<\infty.
\]
Therefore,  $ \partial_t Du$ is only integrable with respect to the weight $\abs{Du}^{p-2}$. Theorem~\ref{cor:alz} says that $\beta\to \frac14$ for $p\to\infty$, which would be Proposition~\ref{thm:alz} again. We conjecture, that for general degenerate $F$ when $p\to\infty$, the best possible order of mixed derivative is $\partial^\frac14_{x_i} u_t$.
\end{remark}
\begin{remark}
 \label{rem:fracsym} 
Let us discuss the case of symmetric gradients in the framework of Theorem~\ref{cor:alz}.  
Only at the estimates of $M_1,...,M_4$ we needed the restriction. This is due to the fact that
$\nabla V(\nabla u)$ does not necessarily exist. Only $\nabla V(\bfepsilon u)$ is an $L^2$-function. Therefore, by the proof above it is only possible to get analog bounds on 
\[
\Bigabs{\frac{\Delta^s_{x_i}(u_t^j)+\Delta^s_{x_j}(u_t^i)}{h^\beta}}^2
\]
in the framework of symmetric gradients.
\end{remark}

\section{Appendix}
\begin{proof}[Proof of Proposition~\ref{pro:Vt}]
 We briefly recall that \eqref{eq} and \eqref{stokes} have a unique solution with additional regularity.

A simple and standard way to prove this is based on Ritz-Galerkin Approximations.
\[
 u_m(t,x)=\sum_{j=1}^m c_j^m(t)\psi_j(x), 
\]
with smooth Ansatz functions $(\phi_j)_{j\in \setN}\in W^{1,\phi}_0(\Omega)\cap L^2(\Omega)$; which are linear independent and whose linear hull is dense in $W^{1,\phi}_0(\Omega)\cap L^2(\Omega)$. We choose 
\[X_m= \set{v\in W^{1,\phi}_0(\Omega)\cap L^2(\Omega)| v= \sum_{j=1}^m c_j\psi_j(x)}\] 
and $ u^m_0\in X_m$, such that $u^m_0\to u_0$ strongly in $L^2(\Omega)$. The approximate equation reads
\[
 \skp{(u_m'(t)}{\psi_k}+\skp{A(\cdot,\nabla u_m)}{\psi_k}=\skp{f}{\phi_k},\,u_m(0)=u_0^m
\]
for all $1\leq k\leq m$ and is equivalent to a system of ODE for the $c_j^m$.

In the existence interval $[0,\tau)$ we may use the test function $u_m'$ and obtain a uniform estimate
\[
 \int_0^\tau\int_\Omega \abs{u'_m}^2\dx\dt + \int_\Omega F(Du_m)dx|^\tau_0\leq \int_0^\tau\int_\Omega\abs{f}^2\dx\dt
\]
 uniformly in $m\in\setN$. Hence the solution can be extended to the full interval $[0,T]$. Moreover, the following estimate holds for a. e. $\tau\in (a,T]$.
\begin{align}
\label{eq:1}
  \int_a^T\int_\Omega \abs{u'_m}^2\dx\dt + \sup_{\tau\in [a,T]}\int_\Omega F(Du_m)(\tau)x\leq \int_0^T\int_\Omega\abs{f}^2\dx\dt +\dashint_0^a\int_\Omega F(Du_m)dx dt.
\end{align}
Due to convexity the Ritz-Galerkin approximation is equivalent to the minimum problem
\begin{align}
\label{eq:minim}
 \int_0^\tau\int_\Omega u'_m(u_m-v) \dx\dt +\int_0^\tau\int_\Omega F(Du_m)\dx\dt\leq \int_0^\tau\int_\Omega F(Dv)+f(u_m-v)\dx\dt,
\end{align}
for all $v\in X_m$ and all $\tau\in [0,T]$.

By weak compactness we find a subsequence $u_m\weakto u$ and $u_m'\weakto u'$ in $L^2$ and $\nabla u_m\weakto \nabla u$ in $L^\phi$. In \eqref{eq:minim} we perform an integration with respect to $\tau\in [t_1,t_1+h]$ and $h$ small, then \eqref{eq:minim} can be rewritten as
\begin{align*}
 &\frac{1}{2}\dashint_{t_1}^{t_1+h}\int_\Omega \abs{u_m}^2 \dx-\int_0^\tau\int_\Omega u'_m v\dx\dt  +\int_0^\tau\int_\Omega F(Du_m)\dx\dt d\tau\\
&\quad \leq \dashint_{t_1}^{t_1+h}\int_0^\tau\int_\Omega F(Dv)+f(u_m-v)\dx\dt d\tau+\int_\Omega \abs{u_0^m}^2 \dx.
\end{align*}
Now we may pass to the limit $m\to \infty$ and due to lower semi continuity in the weak topology we find 
\begin{align}
\begin{aligned}
 &\frac{1}{2}\int_\Omega \abs{u}^2(\tau) \dx-\int_0^\tau\int_\Omega u' v\dx\dt  +\int_0^\tau\int_\Omega F(Du)\dx\dt d\tau\\
 &\quad\leq \int_0^\tau\int_\Omega F(Dv)+f(u_m-v)\dx\dt d\tau+\int_\Omega \abs{u_0}^2 \dx.
\end{aligned}
\end{align}
for a.e. $\tau$ and $v\in L^\phi((0,T), V^{1,\phi}_0(\Omega)\cap L^\infty((0,T),L^2(\Omega)$. This can be reformulated into
\begin{align*}
  \int_0^\tau\int_\Omega u_t(u-v) \dx\dt +\int_0^\tau\int_\Omega F(Du)\dx\dt\leq \int_0^\tau\int_\Omega F(Dv)+f(u_m-v)\dx\dt
\end{align*}
and a weak solution is constructed. Additionally, \eqref{eq:1} holds in the limit and implies the first estimate in Proposition~\ref{pro:Vt} (by Assumption~\ref{ass:phi}). 

We proceed by taking a sequence of $v_m\in X_m$, such that $\nabla v_m\to \nabla u$ strongly. By taking $v_m$ as a testfunction in \eqref{eq:minim} and passing to the limit we find.
\[
 \limsup_m\int_0^\tau\int_\Omega F(Du_m)\dx\dt d\tau\leq \int_0^\tau\int_\Omega F(Du)\dx\dt d\tau.
\]
This implies (by strict convexity)
\[
 \lim_m\int_0^\tau\int_\Omega F(Du_m)\dx\dt d\tau = \int_0^\tau\int_\Omega F(Du)\dx\dt d\tau\text{ and therefore  }D u_m\to Du\text{ strongly}.
\]
Furthermore, differentiating the Ritz Galerkin equations with respect to $t$ and testing with $u_m'$ we obtain by (e) of Assumption~\ref{ass:phi}
\begin{align}
\label{eq:estdis}
 \frac12\int_\Omega\abs{u'_m}^2\dx|^{\tau}_a+\int_a^\tau\int_\Omega\abs{\partial_tV(\nabla u_m)}^2\dx\dt\leq \int_a^\tau\int_\Omega \abs{f_t}^2+\abs{u'_m}^2\dx\dt.
\end{align}
We gain a subsequence $\partial_t V(Du_m)\weakto\partial_t \overline{V}$. However, by the strong convergence of $Du_m$ we find $\overline{V}=V(Du)$. Now passing to the limit in \eqref{eq:estdis} implies the desired estimates after  integration over $a$.
\end{proof}
\begin{proof}[Proof of Proposition~\ref{pro:DV}]
For the sake of simplicity we take $F(x,Q)=F(Q)$ omitting the lower order terms. By Assumption~\ref{ass:phi}(c) we have that
\[
 D^h_{x_i}(A(Du))\cdot D^h_{x_i}(Du)\sim \abs{D^h_{x_i}V(Du)}^2.
\]
We take the testfunction $-D^h_{x_i}(D_{x_i}^h(u))$, this implies by partial summation and Young's inequality \eqref{eq:young}
\[
 \int_0^\tau\int_{\setR^n} \frac12\partial_t \abs{D^h_{x_i}u}^2 +\abs{D^h_{x_i}V(Du)}^2\dx\dt\leq  \int_0^\tau\int_{\setR^n}\phi^*(\abs{D^h_{x_i}f})+\phi(\abs{D_{x_i}^h(u)}).
\]
By passing with $h\to0$ we get the desired estimate. 
\end{proof}
\bibliographystyle{abbrv}

%{amsalpha}

%\bibliography{lars}
\end{document}